\newcommand{\Z}{\mathbf{Z}}
\newcommand{\R}{\mathbf{R}}
\newcommand{\C}{\mathbf{C}}
\newcommand{\sig}{\sigma}
\newcommand{{\ba}}{\bf a}
\newcommand{\ve}{\varepsilon}
\newcommand{\la}{\lambda}
\newcommand{\La}{\Lambda}
\newcommand{\ga}{\gamma}
\newcommand{\pa}{\partial}
\newcommand{\ra}{\rightarrow}
\newcommand{\Om}{\Omega}
\newcommand{\del}{\delta}
\newcommand{\Del}{\Delta}
\newcommand{\na}{\nabla}
\newcommand{\cd}{\cdot}
\newcommand{\al}{\alpha}
\newcommand{\be}{\begin{equation}}
\newcommand{\ee}{\end{equation}}
\newcommand{\om}{\omega}
\newtheorem{lem}{Lemma}{\bf}{\it}
\newtheorem{remark}{Remark}{\it}{\rm}
\newtheorem{theorem}{Theorem}
\newtheorem{proposition}{Proposition}
\newtheorem{corollary}{Corollary}
\newtheorem{hypothesis}{Hypothesis}
\numberwithin{theorem}{section}
\numberwithin{lem}{section}
\numberwithin{hypothesis}{section}
\numberwithin{equation}{section}
\numberwithin{proposition}{section}
\numberwithin{corollary}{section}
\title[Strong Convergence]{Strong Convergence to the Homogenized Limit of Parabolic Equations with Random Coefficients}
\author{Joseph G. Conlon \  and \  Arash Fahim}
\address{ (Joseph G. Conlon): University of Michigan, Department of Mathematics, Ann Arbor,
  MI 48109-1109}
\email{conlon@umich.edu}
\address{ (Arash Fahim): University of Michigan, Department of Mathematics, Ann Arbor,
  MI 48109-1109}
\email{fahimara@umich.edu}
\keywords{Euclidean field theory, pde with random coefficients, homogenization}
\subjclass{81T08, 82B20, 35R60, 60J75}
\begin{document}

\maketitle

\begin{abstract}
This paper is concerned with the study of solutions to discrete parabolic equations in divergence form with random coefficients, and their convergence to solutions of a homogenized equation. It has previously been shown that  if the random environment is translational invariant and ergodic, 
then solutions of the random equation converge under diffusive scaling  to solutions of a homogenized parabolic PDE.  In this paper point-wise estimates are obtained on the difference between the averaged solution to the random equation and the solution to the homogenized equation for certain random environments which are strongly mixing. 
\end{abstract}

\section{Introduction.}
Let $(\Om,\mathcal{F},P)$ be a probability space and  denote by $\langle \  \cd \ \rangle$ expectation w.r. to the measure $P$.   We assume that the $d$ dimensional integer lattice $\Z^d$ acts on $\Om$ by space translation operators $\tau_{x,0}:\Om\ra\Om, \ x\in\Z^d$, which are measure preserving and satisfy the properties $\tau_{x,0}\tau_{y,0}=\tau_{x+y,0}, \ \tau_{0,0}= \ {\rm identity}, \ x,y\in\Z^d$.  We assume also that either the integers $\Z$ or the real line $\R$ acts on $\Om$ by time translation operators $\tau_{0,t}:\Om\ra\Om, $ where $t\in\Z$ in the former case and $t\in\R$ in the latter. In either case we assume that  for all $t,s$, one has $\tau_{0,t}\tau_{0,s}=\tau_{0,t+s}$,  and that the operators $\tau_{0,t}$ commute with the operators $\tau_{x,0}$, so we may set $\tau_{x,t}=\tau_{x,0}\tau_{0,t}= \tau_{0,t}\tau_{x,0}$. 

Consider a bounded measurable function  ${\bf a}:\Om\ra\R^{d(d+1)/2}$  from $\Om$ to the space of symmetric $d\times d$ matrices which satisfies the quadratic form inequality  
\begin{equation} \label{A1}
\la I_d \le {\bf a}(\om) \le \La I_d, \ \ \ \ \ \om\in\Om,
\end{equation}
where $I_d$ is the identity matrix in $d$ dimensions and $\La, \la$
are positive constants. In the case when $\Z$ acts on $\Om$ by operators $\tau_{0,t}$, we shall be interested in solutions $u(x,t,\om)$ to the discrete parabolic equation
\be \label{B1}
u(x,t+1,\om)-u(x,t,\om) \ = \ -\nabla^*{\bf a}(\tau_{x,t}\om)\nabla u(x,t,\om) \ , \quad x\in \Z^d, \ t\ge0, \ \om\in\Om,
\ee
with initial data
\be \label{C1}
u(x,0,\om) \ = \ h(x), \quad x\in\Z^d, \ \om\in\Om \ .
\ee
In the case when $\R$ acts on $\Om$ by operators $\tau_{0,t}$, we shall be interested in solutions $u(x,t,\om)$ to the corresponding continuous in time, discrete in space parabolic equation
\be \label{D1}
\frac{\pa u(x,t,\om)}{\pa t} \ = \ -\nabla^*{\bf a}(\tau_{x,t}\om)\nabla u(x,t,\om) \ , \quad x\in \Z^d, \ t\ge0, \ \om\in\Om,
\ee
with initial data (\ref{C1}).  In (\ref{B1}) and (\ref{D1}) we take  $\nabla$ to be the discrete gradient operator, which has adjoint $\nabla^*$. Thus $\nabla$ is a $d$ dimensional { \it column} operator and $\nabla^*$ a $d$ dimensional {\it row} operator, which act on  functions $\phi:\Z^d\ra\R$ by
\begin{eqnarray} \label{E1}
\na \phi(x) &=& \big( \na_1 \phi(x),... \ \na_d\phi(x) \big), \quad  \na_i \phi(x) = \phi (x + {\bf e}_i) - \phi(x),  \\
\na^* \phi(x) &=& \big( \na^*_1 \phi(x),... \ \na^*_d\phi(x) \big), \quad  \na^*_i \phi(x) = \phi (x - {\bf e}_i) - \phi(x). \nonumber
\end{eqnarray}
In (\ref{E1}) the vector  ${\bf e}_i \in \Z^d$ has 1 as the ith coordinate and 0 for the other coordinates, $1\le i \le  d$. 

One expects that if the translation operators $\tau_{x,t}$ are ergodic  on $\Om$ then solutions to the random equation (\ref{B1}) or (\ref{D1}) converge to solutions of a constant coefficient homogenized equation under diffusive scaling. Thus suppose $f:\R^d\ra\R$ is a $C^\infty$ function with compact support and for $\ve$ satisfying $0<\ve\le 1$ set $h(x)=f(\ve x), \ x\in\Z^d$, in (\ref{C1}), and let  $u_\ve(x,t,\om)$ denote the corresponding solution to (\ref{B1}) or (\ref{D1}) with this initial data.  It has been shown in \cite{loy}, just assuming  ergodicity of the translation operators,  that $u_\ve(x/\ve,t/\ve^2,\om)$ converges in probability as $\ve\ra 0$ to a function $u_{\rm hom}(x,t), \ x\in\R^d, \ t>0$, which is the solution to a constant coefficient parabolic PDE
\be \label{F1}
\frac{\pa u_{\rm hom}(x,t)}{\pa t} \ = \ -\na^* {\bf a}_{\rm hom}\na u_{\rm hom}(x,t) \ , \quad x\in\R^d, \ t>0,
\ee
with initial condition
\be \label{G1}
u_{\rm hom}(x,0) \ = \ f(x), \quad x\in\R^d \ .
\ee
The $d\times d$ symmetric matrix ${\bf a}_{\rm hom}$ in (\ref{F1}) satisfies the quadratic form inequality (\ref{A1}).   Similar results under various ergodic type assumptions on $\Om$ can be found in  \cite{bmp,cps, dkl,r}. In time-independent environments the corresponding results for elliptic equations in divergence form have been proven much earlier  -see \cite{k1,k2,pv,zko}.

In this paper we shall confine ourselves to studying the expectation $\langle \ u(x,t,\cdot) \ \rangle$ of the solution $u(x,t,\om)$ to (\ref{B1}) and(\ref{D1}) with initial condition (\ref{C1}). Our first theorem is consistent with the result of \cite{loy} already mentioned:
\begin{theorem}
Let  $f:\R^d\ra\R$ be a $C^\infty$ function of compact support and set $h(x)=f(\ve x), \ x\in\Z^d$ in (\ref{C1}). 
For the translation group $\tau_{x,t}, \ x\in\Z^d,t\in\Z,$ on $\Om$ assume that one of the operators $\tau_{\mathbf{e}_j,0}, \ j=1,..,d,$ or $\tau_{0,1}$ is ergodic on $(\Om,\mathcal{F},P)$. 
 Then if $4d\La\le 1$ the solution $u_\ve(x,t,\om)$ of (\ref{B1}) with initial data (\ref{C1}) has the property
\be \label{H1}
\lim_{\ve\ra 0}\sup_{x\in\ve\Z^d, t\in\ve^2\Z^+}| \langle \ u_\ve(x/\ve,t/\ve^2,\cdot) \ \rangle - u_{\rm hom}(x,t)| \ = \ 0.
\ee
For the translation group $\tau_{x,t}, \ x\in\Z^d,t\in\R,$ on $\Om$ assume that one of the operators $\tau_{\mathbf{e}_j,0}, \ j=1,..,d,$ or the continuous $1$ parameter group $\tau_{0,t}, \ t\in\R,$ is ergodic on $(\Om,\mathcal{F},P)$.  Then the solution $u_\ve(x,t,\om)$ of (\ref{D1}) with initial data (\ref{C1}) has the property
\be \label{I1}
\lim_{\ve\ra 0}\sup_{x\in\ve\Z^d, t>0}| \langle \ u_\ve(x/\ve,t/\ve^2,\cdot) \ \rangle - u_{\rm hom}(x,t)| \ = \ 0.
\ee
\end{theorem}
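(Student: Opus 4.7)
The strategy is to upgrade the convergence-in-probability result of \cite{loy} to the uniform convergence of expectations asserted in (\ref{H1}) and (\ref{I1}), combining it with a uniform boundedness estimate and a discrete parabolic H\"older regularity estimate valid under the ellipticity assumption~(\ref{A1}).

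First I would establish a discrete maximum principle. In the continuous-time case (\ref{D1}) this is standard, while in the discrete-time case (\ref{B1}) one rewrites
\[
 u(x,t+1,\om) \ = \ \sum_y K(x,y,\om,t)\,u(y,t,\om),
\]
and checks, by expanding $\na^*{\bf a}\na$ in coordinates, that the coefficients $K(x,y,\om,t)$ are non-negative and sum to $1$ in $y$ precisely when $4d\La\le 1$. This explains the hypothesis of the theorem and yields $|u_\ve(x,t,\om)|\le\|f\|_\infty$ for all $x,t,\om$. Together with the result of \cite{loy} and the dominated convergence theorem this gives pointwise-in-$(x,t)$ convergence $\av{u_\ve(x/\ve,t/\ve^2,\cdot)}\ra u_{\rm hom}(x,t)$ as $\ve\ra 0$.

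To upgrade pointwise convergence to uniform convergence I would establish equicontinuity of the family $(x,t)\mapsto\av{u_\ve(x/\ve,t/\ve^2,\cdot)}$ on compact subsets of $\R^d\times(0,\infty)$, uniformly in $\ve\in(0,1]$. The key ingredient is a discrete parabolic H\"older estimate of the form
\[
 |u(x,t,\om)-u(x',t',\om)| \ \le \ C\|h\|_\infty\left(\frac{|x-x'|+\sqrt{|t-t'|}}{\sqrt t}\right)^{\!\al}\!,\quad t'\ge t\ge 1,
\]
with $C,\al>0$ depending only on $d,\la,\La$, and hence uniform in $\om$. This is a discrete analogue of the Nash--De Giorgi--Moser regularity theory for parabolic equations in divergence form. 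Rescaling yields the desired equicontinuity away from $t=0$; for small $t$ one instead compares $u_\ve$ directly with the smooth initial data $h$ using Taylor expansion of $f$.

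Finally, uniformity over the unbounded set $\ve\Z^d\times\ve^2\Z^+$ requires tail decay at infinity. For $u_{\rm hom}$ this follows from Gaussian heat-kernel bounds for the homogenized operator and the compact support of $f$. For $\av{u_\ve}$, the same Nash theory gives on-diagonal heat-kernel bounds uniform in $\om$, hence $\sup_{x\in\ve\Z^d}|\av{u_\ve(x/\ve,t/\ve^2,\cdot)}|\le C\|f\|_1 t^{-d/2}$ for $t\ge 1$, while off-diagonal decay in $|x|$ follows from Aronson-type upper bounds. A standard Arzel\`a--Ascoli argument then combines pointwise convergence, equicontinuity and tail decay to give (\ref{H1}) and (\ref{I1}). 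The main obstacle is the uniform H\"older estimate used in the equicontinuity step, whose proof, though classical in the continuum, requires a careful discrete adaptation to coefficients as rough as (\ref{A1}) allows.
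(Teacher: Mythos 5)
Your plan hinges on a discrete maximum principle, and that first step already fails for the class of coefficients allowed by (\ref{A1}). Writing (\ref{B1}) as $u(x,t+1)=\sum_y K(x,y,\om,t)u(y,t)$, the coefficient of $u(x-\mathbf{e}_i+\mathbf{e}_j,t)$ with $i\ne j$ is $-a_{ij}(\tau_{x-\mathbf{e}_i,t}\om)$, coming from the divergence-form expression $\na^*[\mathbf{a}(\tau_{\cdot,t}\om)\na u]$; since $\mathbf{a}$ is only assumed symmetric with $\la I_d\le\mathbf{a}\le\La I_d$, its off-diagonal entries can be positive, so these kernel entries are negative no matter how small $\La$ is. Thus the one-step kernel is not a transition probability, the bound $|u_\ve|\le\|f\|_\infty$ is not justified (so the dominated-convergence step is unsupported), and the whole probabilistic apparatus you invoke afterwards (random-walk interpretation, Aronson upper bounds, Nash--De Giorgi--Moser H\"older continuity uniform in $\om$) is not available off the shelf: the classical discrete theory is developed for scalar edge conductances, not for full matrix coefficients, and in the discrete-time case even $\ell^\infty$-stability of the scheme is a nontrivial issue. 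What you label a ``careful discrete adaptation'' is in fact the main difficulty, and it is precisely what the paper avoids. A further soft spot is the reliance on \cite{loy}: the paper cites that work only as background (``consistent with''), proves the averaged statement independently, and you would still need to check that \cite{loy} covers the discrete equations (\ref{B1}), (\ref{D1}) as stated and that its effective matrix coincides with the $\mathbf{a}_{\rm hom}$ of (\ref{F1}).

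By contrast, the paper's proof never uses any pathwise regularity or positivity. It starts from the Fourier representation (\ref{D2}) (resp.\ (\ref{T*2})) of the solution, so that the averaged solution is an explicit oscillatory integral (\ref{H*2}) involving the matrix symbol $q(\xi,\eta)$; ergodicity of a single translation enters only through Proposition 2.1, which gives existence of the limit $q(\xi,\eta)\to q(0,0)$ as $(\xi,\eta)\to(0,0)$, and the lower bound (\ref{AD2}) controls the denominator. The uniform convergence in $(x,t)$ then comes for free because the comparison is made at the level of the integrands, uniformly in the oscillatory factor $e^{-i\xi\cdot x+\eta t}$. If you want to salvage your route, you would first have to establish an $L^\infty$ bound and an annealed modulus of continuity for (\ref{B1}) with genuinely matrix-valued coefficients (and handle the small-$t$ regime, where a crude one-step estimate only gives increments of order $\ve$ per step, not $\ve^2$); each of these is a substantial theorem in its own right rather than a routine adaptation.
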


It has been shown in the case of homogenization of elliptic equations in divergence form with random coefficients,  that a rate of convergence in homogenization can be obtained provided the random environment satisfies some {\it quantitative strong mixing property}.  The first results in this direction were proven in the 1980's  by Yurinski \cite{y}, but there have been several papers more recently extending his work.  In particular, Caffarelli and Souganidis \cite{cs} have obtained rates of convergence results in homogenization of fully nonlinear PDE. In recent work of Gloria and Otto  \cite{go1}  an optimal rate of convergence result was obtained for linear elliptic equations in divergence form. Following an idea of Naddaf and Spencer  \cite{ns2},  they express the quantitative strong mixing assumption as a Poincar\'{e} inequality. This formulation of  the strong mixing assumption is very useful when the random environment is a Euclidean field theory with uniformly convex Lagrangian.

For the case of parabolic equations in divergence form with random coefficients, we were unable to find in the literature any results on rate of convergence in homogenization. Here we shall obtain a rate of convergence, but only for the averaged solution to the parabolic equation as in Theorem 1.1, and for two particular environments.  For the discrete time problem  (\ref{B1}), (\ref{C1}) we assume the environment is i.i.d. That is we assume the variables $\mathbf{a}(\tau_{x,t}\cdot), \ (x,t)\in\Z^{d+1},$ are   i.i.d.  For the continuous time problem  (\ref{C1}), (\ref{D1}) we assume the environment is the stationary process associated with a massive Euclidean field theory. 

This Euclidean field theory is determined by a potential $V : \R^d \ra \R$, which is a $C^2$ uniformly convex function, and a mass $m>0$.  Thus the second derivative ${\bf a}(\cdot)=V''(\cdot)$ of $V(\cdot)$ is assumed to satisfy the inequality (\ref{A1}). Consider functions 
$\phi : \Z^d\times\R \ra \R$ which we denote as $\phi(x,t)$ where $x$ lies on the integer lattice $\Z^d$ and $t$ on the real line $\R$.  Let $\Om$ be the space of
all such functions which have the property that for each $x\in\Z^d$ the function $t\ra\phi(x,t)$ on $\R$ is continuous, and $\mathcal{F}$ be the Borel algebra generated by finite 
dimensional rectangles 
$\{ \phi(\cdot,\cdot) \in \Om: \  |\phi(x_i,t_i) - a_i| < r_i, \ i=1,...,N\}$, where
$(x_i,t_i) \in \Z^d\times\R, \ a_i \in \R, \ r_i > 0, \ i=1,...,N, \ N \ge 1$.  The translation operators $\tau_{x,t}:\Om\ra\Om, \ (x,t)\in\Z^d\times\R$, are defined by $\tau_{x,t}\phi(z,s)=\phi(x+z,t+s), \ z\in\Z^d,s\in\R$. 

For any $d\ge 1$ and $m>0$ one can define \cite{c1,fs} a unique ergodic translation invariant probability 
measure $P$ on $(\Om, \mathcal{F})$ which depends on the function $V$ and $m$.  In this measure the variables $\phi(x,t), \ x\in\Z^d,t>0,$ conditioned on the variables $\phi(x,0), \ x\in\Z^d,$ are determined as solutions of the infinite dimensional stochastic differential  equation 
\be \label{J1}
d\phi(x,t) \ = \ -\frac{\pa}{\pa\phi(x,t)}\sum_{x'\in\Z^d} \frac{1}{2}\{V(\na\phi(x',t))+m^2\phi(x',t)^2/2\} \ dt +dB(x,t) \ , \quad x\in\Z^d, t>0, 
\ee
 where $B(x,\cdot), \ x\in\Z^d,$ are independent copies of Brownian motion. Formally the invariant measure for the Markov process (\ref{J1}) is the Euclidean field theory measure
\be \label{K1}
\exp \left[ - \sum_{x\in \Z^d} V\left( \na\phi(x)\right)+m^2\phi(x)^2/2 \right] \prod_{x\in \Z^d} d\phi(x)/{\rm normalization}.
\ee
Hence if the variables $\phi(x,0), \ x\in\Z^d,$ have distribution determined by (\ref{K1}), then $\phi(\cdot,t), \ t>0,$ is a stationary process and so can be extended to all $t\in\R$ to yield a measure $P$ on $(\Om,\mathcal{F})$. For this measure the translation operators $\tau_{x,t}, \ (x,t)\in\Z^d\times\R,$ form a group of  measure preserving transformations on $(\Om,\mathcal{F},P)$.  
\begin{theorem}
Let  $f:\R^d\ra\R$ be a $C^\infty$ function of compact support and set $h(x)=f(\ve x), \ x\in\Z^d$ in (\ref{C1}). 
 Then if $4d\La\le 1$ and the variables  $\mathbf{a}(\tau_{x,t}\cdot), \ (x,t)\in\Z^{d+1},$ are   i.i.d.,  the solution $u_\ve(x,t,\om)$ of (\ref{B1}) with initial data (\ref{C1}) has the property
\be \label{L1}
\sup_{x\in\ve\Z^d, t\in\ve^2\Z^+}| \langle \ u_\ve(x/\ve,t/\ve^2,\cdot) \ \rangle - u_{\rm hom}(x,t)| \ \le \ C\ve^\al, \quad 0<\ve\le 1, 
\ee
where $\al>0$ is a constant depending only on $d,\La/\la$ and  $C$ is a constant depending only on $d,\La,\la$ and the function $f(\cdot)$.  

Let $\tilde{{\bf a}}:\R\ra\R^{d(d+1)/2}$  be a $C^1$ function on $\R$ with values in the space of symmetric $d\times d$ matrices which satisfy the quadratic form inequality  (\ref{A1}). Let $(\Om, \mathcal{F}, P)$ be the probability space of  fields $\phi(\cdot,\cdot)$ determined by (\ref{J1}), (\ref{K1}) and set ${\bf a}(\cdot)$ in (\ref{D1}) to be ${\bf a}(\phi)=\tilde{{\bf a}}(\phi(0,0)), \ \phi\in\Om$.  
Suppose in addition that the derivative $D\tilde{{\bf a}}(\cdot)$ of $\tilde{{\bf a}}(\cdot)$ satisfies  the inequality $\|D\tilde{{\bf a}}(\cdot)\|_{\infty}\le \La_1$.  
Then the solution $u_\ve(x,t,\om)$ of (\ref{D1}) with initial data (\ref{C1}) has the property
\be \label{M1}
\sup_{x\in\ve\Z^d, t>0}| \langle \ u_\ve(x/\ve,t/\ve^2,\cdot) \ \rangle - u_{\rm hom}(x,t)| \ \le \  C\ve^\al, \quad 0<\ve\le1,
\ee
where $\al>0$ is a constant depending only on $d,\La/\la$ and  $C$ is a constant depending only on $d,\La,\la,m,\La_1$ and the function $f(\cdot)$.  
\end{theorem}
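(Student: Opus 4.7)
The plan is to attack both parts of Theorem 1.2 through Fourier--Laplace analysis of the averaged Green's function. Let $G_\om(x,t;y,s)$ denote the random Green's function for the operator on the right hand side of (\ref{B1}) or (\ref{D1}); by translation invariance of $P$, the average $\bar G(x-y,t-s)=\langle G_\om(x,t;y,s)\rangle$ depends only on the differences, and $\langle u_\ve(x,t,\cd)\rangle=\sum_{y\in\Z^d}\bar G(x-y,t)\,f(\ve y)$. Passing to the space Fourier transform and, in time, either the $z$-transform (discrete case) or the Laplace transform (continuous case), the homogenized Green's function takes the explicit form $\widehat G_{\rm hom}(k,\ze)=(\ze+e(k)^*\mathbf{a}_{\rm hom}\,e(k))^{-1}$ with $e(k)$ the appropriate lattice symbol, while $\widehat{\bar G}(k,\ze)=(\ze+q(k,\ze))^{-1}$ for a matrix self--energy multiplier $q$ whose low--frequency limit equals $\mathbf{a}_{\rm hom}$. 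A quantitative H\"older estimate $|q(k,\ze)-e(k)^*\mathbf{a}_{\rm hom} e(k)|\lesssim (|k|^2+|\ze|)^{1+\al/2}$ on the low--frequency region would, after the diffusive rescaling $k\mapsto\ve k$, $\ze\mapsto\ve^2\ze$ and an inverse transform against $\widehat f$, yield exactly the rate $\ve^\al$ claimed in (\ref{L1}) and (\ref{M1}).

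For the discrete i.i.d.\ case the hypothesis $4d\La\le 1$ allows an absolutely convergent Neumann expansion of the random Green's function around the plain discrete heat equation with diffusion $\La$: write $\mathbf{a}(\tau_{x,t}\om)=\La I_d-b(\tau_{x,t}\om)$ with $\|b\|_\infty\le\La-\la$ and iterate the resolvent identity. Each term of the resulting series is a product $\prod_j b(\tau_{x_j,t_j}\om)$ along a lattice path in $\Z^{d+1}$, and by i.i.d.\ independence the expectation factors over the distinct space--time points visited, giving an explicit series representation for $q(k,\ze)$. The analogous series for $\mathbf{a}_{\rm hom}$ is obtained by setting $(k,\ze)=(0,0)$, so that the term--by--term difference produces factors of the form $e^{\i k\cd(x_i-x_j)}-1$ and resolvent differences that I control by H\"older regularity of the discrete heat kernel in Fourier space.

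For the continuous--time Euclidean field theory case, independence is replaced by the quantitative mixing provided by the Helffer--Sj\"ostrand/Brascamp--Lieb machinery for (\ref{K1}): because $V$ is uniformly convex, covariances have a representation of the form $\langle F;H\rangle=\int_{-\infty}^\infty \langle (DF)^*[\mathcal L_\phi+m^2]^{-1}(s)\,(DH)\rangle\,ds$, where $D$ is the $\phi$--gradient and $\mathcal L_\phi$ is an auxiliary parabolic operator on $\Z^d\times\R$ with random coefficients $V''(\na\phi)$ satisfying (\ref{A1}). The mass $m>0$ forces exponential decay of $[\mathcal L_\phi+m^2]^{-1}$ in the time separation $|s|$, uniform in $\phi$, and this decay replaces i.i.d.\ factorization. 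Since $\mathbf{a}(\phi)=\tilde{\mathbf{a}}(\phi(0,0))$, the chain rule gives $\|D\mathbf{a}\|_\infty\le\La_1$ whenever a $\phi$--derivative hits the coefficient, and the self--energy expansion for $q(k,\ze)$ proceeds by analogy with the i.i.d.\ case, with Helffer--Sj\"ostrand covariance bounds substituted for product decoupling.

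The hardest step is the quantitative H\"older control of $q(k,\ze)-e(k)^*\mathbf{a}_{\rm hom} e(k)$ uniformly on a neighborhood of the origin rather than only at $(k,\ze)=(0,0)$: the corrector series is only conditionally convergent at the origin, so one must extract cancellation from the oscillating factors $e^{\i k\cd x}-1$ combined with Plancherel--type $\ell^2$ bounds rather than relying on absolute summability. In the EFT case there is the additional difficulty that $[\mathcal L_\phi+m^2]^{-1}$ is itself random; uniform bounds for its heat kernel, available thanks to (\ref{A1}), must be established so that the oscillation/cancellation analysis of the i.i.d.\ case goes through verbatim. The final exponent $\al$ in both cases should emerge from this H\"older bound and depend only on $d$ and $\La/\la$, while the constant $C$ absorbs the seminorms of $f$ (through $\widehat f$), along with $m$ and $\La_1$ in the EFT setting.
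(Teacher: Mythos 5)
Your overall frame is the same as the paper's: represent $\langle u_\ve\rangle$ through the Fourier--Laplace transform of the averaged Green's function, $\hat G_{\mathbf a}=(e^\eta-1+e(\xi)^*q(\xi,\eta)e(\xi))^{-1}$ (resp.\ $(\eta+e(\xi)^*q(\xi,\eta)e(\xi))^{-1}$), and reduce the rate $\ve^\al$ to a uniform H\"older estimate on the self--energy $q(\xi,\eta)$ near $(\xi,\eta)=(0,0)$, which is exactly the paper's Lemma 3.1 feeding into the proof of Theorem 1.1. The gap is that you do not supply a workable mechanism for proving that H\"older estimate, and the mechanisms you gesture at would fail. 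In the i.i.d.\ case, ``expand, use independence to factor the expectation, and control term-by-term differences by H\"older regularity of the heat kernel'' is an absolute-value argument, and the kernel of $T_{\xi,\eta}$, namely $\La\nabla\nabla^*G_\La(x,t)$, is not absolutely summable over $\Z^{d+1}$ (its $\ell^1$ norm diverges logarithmically in time), as you yourself concede when you call the series ``only conditionally convergent.'' The replacement you offer, ``Plancherel-type $\ell^2$ bounds,'' is not the right substitute either: writing the increment as $q(\xi',\eta')-q(\xi,\eta)$ controlled by $\|P\sum_{x,t}g(x,t)\mathbf b(\tau_{x,-t}\cdot)[v+\pa_\xi\Phi(\xi,\eta,\tau_{x,-t}\cdot)v]\|$ with $g$ the difference of phase-modulated heat kernels (the paper's (\ref{F6})), the point is that $g$ is small in $L^p(\Z^{d+1})$ only for $p>1$ (its $L^1$ norm is not small uniformly in $\Re\eta>0$), while the trivial bound on the map $g\mapsto P\sum g\,\mathbf b[v+\pa_\xi\Phi v]$ is only an $L^1$ bound. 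The entire content of the rate is therefore an $L^p$, $p>1$, bound for this map uniform in $(\xi,\eta)$ --- the paper's Hypotheses 3.1/3.2 --- and proving it is where the real work lies: for i.i.d.\ environments the paper passes to the Fock-space representation and invokes Jones's parabolic Calder\'on--Zygmund theorem to get $L^p$ boundedness of $T_{\xi,\eta}$ for $p$ near $2$, then interpolation/Young to reach $p_0>1$; an $\ell^2$/Plancherel bound alone would require an a priori $L^\infty$-type control of the Fock components of $\mathbf b(v+\pa_\xi\Phi v)$ in the total-momentum variable, which you have not established and which is not automatic (if it were, Theorem 1.1 would already come with a rate for any ergodic environment).

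In the continuous-time field-theory case the gap is larger. You invoke Helffer--Sj\"ostrand/Brascamp--Lieb and exponential time decay from the mass, but the static HS identity and the Poincar\'e inequality (\ref{F5}) apply to functions of $\phi(\cdot,0)$ at a single time; the corrector $\Phi(\xi,\eta,\cdot)$ is a functional of the whole space-time field, so one first needs a Poincar\'e inequality for time-dependent functionals, which the paper obtains via the Clark--Ocone formula and Malliavin calculus (Lemma 6.1), and even that only yields the $p=2$ case. To reach $p>1$ one must differentiate the corrector equation with respect to the field, obtain the auxiliary random parabolic equation for the field derivative $\na G$ (the paper's (\ref{BQ5})--(\ref{BY5})), and control it in $L^q$ for $q$ near $2$ via a continuous-time Calder\'on--Zygmund theorem; this is also precisely where the structure $\mathbf a(\phi)=\tilde{\mathbf a}(\phi(0,0))$ and the bound $\|D\tilde{\mathbf a}\|_\infty\le\La_1$ enter. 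Your sketch never differentiates $\Phi$ in the field direction and never identifies the $p>1$ requirement, so the ``hardest step'' you flag remains unproved in both settings.
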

We consider what  Theorem 1.2  tells us about the expectation of the Green's function for  the equations (\ref{B1}) and (\ref{D1}).  By translation invariance of the measure we have that
\be \label{N1}
\langle \  u(x,t,\cdot) \ \rangle=\sum_{y\in\Z^d} G_{{\bf a}}(x-y,t)h(y), \quad x\in\Z^d,
\ee
where  $G_{{\bf a}}(x,t)$ is the expected value of the Green's function. Setting $h(x)=f(\ve x), \ x\in\Z^d$, then (\ref{N1}) may be written as
\be \label{O1}
\langle \ u_\ve(x/\ve,t/\ve^2,\cdot) \ \rangle=\int_{\ve Z^d} \ve^{-d}G_{{\bf a}}\left(\frac{x-z}{\ve},\frac{t}{\ve^2}\right) f(z)  \ dz, \quad x\in \ve \Z^d,
\ee
where integration over $\ve\Z^d$ is defined by
\be \label{P1}
\int_{\ve Z^d} g(z) \ dz \ =  \ \sum_{z\in\ve\Z^d} g(z) \ \ve^d.
\ee
Let $G_{{\bf a}_{\rm hom}}(x,t), \ x\in\R^d, \ t>0$, be the Greens function for the PDE (\ref{F1}). One easily sees that $G_{{\bf a}_{\rm hom}}(\cdot,\cdot)$ satisfies the scaling property
\be \label{Q1}
\ve^{-d} G_{{\bf a}_{\rm hom}}(x/\ve,t/\ve^2) \ = \  G_{{\bf a}_{\rm hom}}(x,t), \quad \ve>0, \ x\in\R^d, \ t>0.
\ee
Hence Theorem 1.2 implies that averages of $\ve^{-d}G_{{\bf a}}(x/\ve,t/\ve^2)-\ve^{-d} G_{{\bf a}_{\rm hom}}(x/\ve,t/\ve^2)$ with respect to $x\in\ve\Z^d$ are bounded by $C\ve^\al$ for some constant $C$.  Conversely Theorem 1.2 is implied by the {\it point-wise} estimate on Green's functions:
\be \label{R1}
\big|\ve^{-d} G_{{\bf a}}(x/\ve,t/\ve^2)-\ve^{-d}G_{{\bf a}_{\rm hom}}(x/\ve,t/\ve^2)\big| \ \le \ \frac{C\ve^\alpha}{[\La t+\ve^2]^{(d+\alpha)/2}} \exp\left[-\ga\min\left\{\frac{|x|}{\ve}, \ \frac{ |x|^2}{\La t+\ve^2}\right\}\right] \ ,
\ee
provided $\La t\ge \ve^2$ and $x\in\ve\Z^d$.

It is  clear that the inequality (\ref{R1}) for $\ve<1$ follows from the same inequality for $\ve=1$:
\be \label{S1}
\big| G_{{\bf a}}(x,t)-G_{{\bf a}_{\rm hom}}(x,t)\big| \ \le \ \frac{C}{[\La t+1]^{(d+\alpha)/2}} \exp\left[-\ga\min\left\{|x|, \ \frac{ |x|^2}{\La t+1}\right\}\right] \ ,
\ee
provided $\La t\ge 1$ and $x\in\Z^d$.
We shall prove such an inequality and also  similar inequalities for the derivatives of the expectation of the Green's function,
\be \label{T1}
\big| \na G_{{\bf a}}(x,t)-\na G_{{\bf a}_{\rm hom}}(x,t)\big| \ \le \ \frac{C}{[\La t+1]^{(d+1+\alpha)/2}} \exp\left[-\ga\min\left\{|x|, \ \frac{ |x|^2}{\La t+1}\right\}\right] \ , 
\ee
\be \label{U1}
\big|\na\na  G_{{\bf a}}(x,t)-\na\na G_{{\bf a}_{\rm hom}}(x,t)\big| \ \le \ \frac{C}{[\La t+1]^{(d+2+\alpha)/2}} \exp\left[-\ga\min\left\{|x|, \ \frac{ |x|^2}{\La t+1}\right\}\right] \ .  
\ee
\begin{theorem} Let $(\Om,\mathcal{F},P)$ and $\mathbf{a}(\om), \ \om\in\Om,$ be as in the statement of Theorem 1.2. Then for $d\ge 1$ there exists $\alpha,\ga>0$ depending only on $d$ and  $\La/\la$, such that (\ref{S1}), (\ref{T1}) and (\ref{U1}) hold for some positive constant $C$. In the discrete time case $C$ depends only on $d,\La,\la$, and in the continuous time case only on
$d,\la,\La,m,\La_1$.  
\end{theorem}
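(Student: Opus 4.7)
My approach starts from a Duhamel-type comparison. Writing $E(x,t,\om) = G_{{\bf a}}(x,t,\om) - G_{{\bf a}_{\mathrm{hom}}}(x,t)$ for the unaveraged error, $E$ satisfies the homogenized parabolic equation with source $\na^*({\bf a}_{\mathrm{hom}}-{\bf a}(\tau_{x,t}\om))\na G_{{\bf a}}(x,t,\om)$ and zero initial data, so after taking expectation and applying Duhamel one obtains a representation of $\langle G_{{\bf a}}\rangle-G_{{\bf a}_{\mathrm{hom}}}$ as a space-time convolution of $\na G_{{\bf a}_{\mathrm{hom}}}$ against the flux defect $\langle({\bf a}_{\mathrm{hom}}-{\bf a}(\tau_{y,s}\om))\na G_{{\bf a}}(y,s,\om)\rangle$. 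The bounds (\ref{S1})--(\ref{U1}) therefore reduce to showing that this flux defect is small and that discrete gradients applied to the convolution only cost the expected factors $[\La t+1]^{-1/2}$ coming from Nash--Aronson regularity of the homogenized kernel.

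To control the flux defect I would introduce a stationary corrector $\chi_j(x,t,\om)$ in each coordinate direction ${\bf e}_j$, solving the random parabolic equation with right-hand side $\na^*{\bf a}\,{\bf e}_j$. In the i.i.d.\ discrete-time case, an Efron--Stein/spectral-gap inequality yields $\langle|\na\chi_j|^2\rangle\le C$ together with decay of space-time correlations of $\chi_j$. In the Euclidean-field-theory case the analogous variance bounds come from the Helffer--Sj\"{o}strand/Brascamp--Lieb representation associated with the uniformly convex Lagrangian and mass $m>0$; the hypothesis $\|D\tilde{{\bf a}}\|_\infty\le\La_1$ is precisely what lets the Helffer--Sj\"{o}strand operator act usefully on ${\bf a}(\phi)=\tilde{{\bf a}}(\phi(0,0))$. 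Substituting the two-scale ansatz $G_{{\bf a}}\approx G_{{\bf a}_{\mathrm{hom}}}+\sum_j\chi_j\,\na_j G_{{\bf a}_{\mathrm{hom}}}$ into the Duhamel representation and using these variance bounds converts the flux defect into an error of size $\ve^\al$ in the relevant weighted norm.

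To upgrade this smallness to the pointwise Aronson--Gaussian form with prefactor $[\La t+1]^{-(d+\al)/2}$ and envelope $\exp[-\ga\min\{|x|,|x|^2/(\La t+1)\}]$, I would combine classical Nash--Aronson upper bounds on $G_{{\bf a}}$ and $G_{{\bf a}_{\mathrm{hom}}}$ (which provide the mixed Gaussian--ballistic decay characteristic of bounded-range discrete operators) with differentiation of the convolution representation: each discrete gradient on the homogenized kernel extracts one power of $[\La t+1]^{-1/2}$, so (\ref{T1}) and (\ref{U1}) drop out of (\ref{S1}) once one has the analogous Nash--Aronson regularity for $G_{{\bf a}}$ itself. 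The $\ve^\al$ rate extends to all $t\ge 1$ by a semigroup bootstrap exploiting the smoothing of $G_{{\bf a}}$ on the scale $\sqrt{t}$.

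The main obstacle is control of the flux defect in the continuous-time Euclidean-field-theory case: because the field $\phi(x,t)$ is correlated in time through the Langevin dynamics (\ref{J1}), ${\bf a}(\tau_{y,s}\om)$ and $\na G_{{\bf a}}(y,s,\om)$ are not decoupled from the past, and the Helffer--Sj\"{o}strand calculus has to be carried out in the full space-time setting rather than purely spatially. A secondary difficulty is the Hessian estimate (\ref{U1}), where the borderline short-time singularity of $\na\na G_{{\bf a}_{\mathrm{hom}}}$ must be absorbed by additional decay from the corrector expansion, and it is at this step that the exponent $\al$ has to be chosen small enough in terms of $d$ and $\La/\la$.
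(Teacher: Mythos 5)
Your plan is a genuinely different, real-space route (Duhamel comparison, two-scale ansatz with correctors, spectral-gap/Helffer--Sj\"{o}strand variance bounds) from the paper's argument, which is entirely Fourier-analytic: the paper writes $\hat G_{{\bf a}}(\xi,\eta)$ in terms of the effective symbol $q(\xi,\eta)$ of (\ref{C2}), proves H\"{o}lder continuity of $q$ near $(0,0)$ and weak-$L^p$ bounds on its $\eta$-derivatives (Lemma 3.1, Proposition 4.1), and converts these into (\ref{S1})--(\ref{U1}) by a complex shift $\xi\to\xi+ia$ and a dyadic decomposition of the Fourier domain. But as written your proposal has a gap exactly at the quantitative heart of the problem. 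The step ``substituting the two-scale ansatz and using these variance bounds converts the flux defect into an error of size $\ve^\al$'' is asserted, not derived: an $L^2$ corrector bound plus unquantified ``decay of space-time correlations'' gives no rate. What produces the rate in the paper is a strong quantitative mixing input (Hypotheses 3.1/4.1, i.e. $\|g\|_p$-bounds with some $p>1$), verified for the i.i.d. case by Jones's parabolic Calderon--Zygmund theorem and, in the continuous-time case, by a new Poincar\'{e} inequality for functionals of the time-dependent field built from the Clark--Ocone formula. You flag the time-correlation issue for the Langevin dynamics (\ref{J1}) as ``the main obstacle'' but offer no mechanism to overcome it; without a substitute for that space-time Poincar\'{e}/mixing estimate the field-theory half of the theorem does not close, and even in the i.i.d. case the passage from Efron--Stein to a pointwise-in-$(y,s)$, Gaussian-weighted smallness of the flux defect is missing.

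The derivative bounds are a second genuine gap. Your claim that (\ref{T1}) and (\ref{U1}) ``drop out'' of (\ref{S1}) because each discrete gradient of the homogenized kernel costs $[\La t+1]^{-1/2}$ fails for the second difference: in the Duhamel representation only one gradient can land on the homogenized kernel, and the other lands on the annealed quantity involving $\na G_{{\bf a}}$, so you need pointwise annealed control of second differences of the random Green's function with Gaussian weights. That is not a consequence of Nash--Aronson upper bounds; the paper must invoke the Delmotte--Deuschel annealed H\"{o}lder estimates on $\na\na G_{{\bf a}}$, combined with a mollification argument (Theorem 4.2) and an auxiliary H\"{o}lder-in-$x$ estimate (\ref{E3}), precisely at this point. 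Relatedly, the exponential factor $\exp[-\ga\min\{|x|,|x|^2/(\La t+1)\}]$ must be proved for the \emph{difference} with the extra $[\La t+1]^{-\al/2}$ gain, not merely for each kernel separately; in the paper this comes from analytic continuation of $q(\xi,\eta)$ to complex $\xi$ (Corollary 2.1 and the choice (\ref{H3})), and your sketch supplies no replacement for that step.
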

The proofs of Theorem 1.2 and Theorem 1.3 follow the same lines as the proofs of the corresponding results for elliptic equations in \cite{cs}.  One begins with a Fourier representation for the average of the solution to the random parabolic equation, which was obtained in \cite{c2}. Then for the i.i.d. environment the generalization by Jones \cite{j} of the Calderon-Zygmund theorem \cite{cz} to parabolic multipliers, together with some interpolation inequalities,  yields Theorem 1.2 and the inequalities (\ref{S1}), (\ref{T1}) of Theorem 1.3 in the discrete time case.  Similarly to \cite{cs} we need to use the result of Delmotte and Deuschel \cite{dd} on the H\"{o}lder continuity of the second difference $\na\na  G_{{\bf a}}(x,t)$ in order to prove (\ref{U1}).   In the continuous time case  we need in addition to prove some Poincar\'{e} inequalities for time dependent fields. To do this we follow the methodology of Gourcy-Wu \cite{gw} by using the Clark-Ocone formula \cite{nu}.

\section{Fourier Space Representation and Homogenization}
In this section we shall prove the homogenization result Theorem 1.1. The proof of this is based on a Fourier representation for the solutions of (\ref{B1}), (\ref{D1}) which was given in \cite{c2}. 

We begin by  summarizing relevant results from \cite{c2} for the discrete time equation (\ref{B1}). Thus we are assuming a probability space $(\Om,\mathcal{F},P)$ and a set of translation operators $\tau_{x,t}, \ x\in\Z^d,t\in\Z,$ acting on $\Om$.  For $\xi\in\R^d$ and  $\psi:\Om\ra\C$  a measurable function  we define the $\xi$ derivative of $\psi(\cdot)$ in the $j$ direction $\pa_{j,\xi}$, and its adjoint  $\pa^*_{j,\xi}$, by
\begin{eqnarray} \label{A2}
\pa_{j,\xi} \psi(\om) \ &=& \  e^{-i{\bf e}_j.\xi}\psi(\tau_{{\bf e}_j,0} \ \om)-\psi(\om),  \\
\pa^*_{j,\xi} \psi(\om) \ &=& \   e^{i{\bf e}_j.\xi}\psi(\tau_{-{\bf e}_j,0} \ \om)-\psi(\om). \nonumber
\end{eqnarray}
The  $d$ dimensional column $\xi$ derivative operator  $\pa_\xi$ is then given by  $\pa_\xi=(\pa_{1,\xi},....,\pa_{d,\xi})$. Similarly to  (\ref{E1}) its adjoint  $\pa_\xi^*$ is  given by the row operator $\pa_\xi^*=(\pa^*_{1,\xi},....,\pa^*_{d,\xi})$. Let  $P:L^2(\Om)\ra L^2(\Om)$ be the projection orthogonal to the constant function and $\eta\in\C$ with $\Re\eta>0$. Then there is a unique square integrable solution $\Phi(\xi,\eta,\om)$ to the equation
\be \label{B2}
e^\eta\Phi(\xi,\eta,\tau_{0,1}\om)-\Phi(\xi,\eta,\om) +P\pa_\xi^*{\bf a}(\om)\pa_\xi\Phi(\xi,\eta,\om)=-P\pa^*_\xi {\bf a}(\om), 
\ee
provided $4d\La\le 1$. Thus there is a unique row vector $\Phi(\xi,\eta,\om)=[\Phi_1(\xi,\eta,\om),....,\Phi_d(\xi,\eta,\om)]$ with $\Phi_j(\xi,\eta,\cdot)\in L^2(\Om), \ j=1,..,d,$ which satisfies (\ref{B2}). Let $q(\xi,\eta)=[q_{r,r'}(\xi,\eta)]$ be the $d\times d$ matrix function given in terms of the solution to (\ref{B2}) by the formula
\be \label{C2}
q(\xi,\eta) \ = \  \langle \ {\bf a}(\cdot) \ \rangle+ \langle \ {\bf a}(\cdot)\pa_\xi  \Phi(\xi,\eta,\cdot) \ \rangle \ .
\ee
We define the $d$ dimensional periodic column vector $e(\xi)\in\C^d$ to have $j$th entry given by the formula $e_j(\xi)=e^{-i{\bf e}_j\cdot\xi}-1, \  1\le j\le d$. It was shown in \cite{c2} that the solution $u(x,t,\om), \ x\in\Z^d, t=0,1,2,..,\om\in\Om$,  to the initial value problem (\ref{B1}), (\ref{C1}) has the representation 
\be \label{D2}
u(x,t,\om)  =  \frac{1}{(2\pi)^{d+1}}\int_{[-\pi,\pi]^{d+1}} 
\frac{\hat{h}(\xi)e^{-i\xi.x+\eta(t+1)}}{e^\eta-1+e(\xi)^*q(\xi,\eta)e(\xi)}\left[1+\Phi(\xi,\eta,\tau_{x,t}\om)e(\xi)\right] \ d[\Im\eta] \ d\xi  ,   
\ee
where $\hat{h}(\cdot)$ is the Fourier transform of $h(\cdot)$, 
\be \label{E2}
\hat{h}(\xi) \ = \ \sum_{x\in\Z^d} h(x)e^{i\xi\cdot x} \ .
\ee
It follows from (\ref{D2}) that the Fourier transform $\hat{G}_{{\bf a}}(\xi,\eta)$ of   the averaged Green's function $G_{{\bf a}}(\cdot,\cdot)$ for (\ref{B1}), (\ref{C1}) given by
\be \label{F*2}
\hat{G}_{{\bf a}}(\xi,\eta) \ = \  \sum_{t=0}^\infty\sum_{x\in\Z^d} G_{{\bf a}}(x,t)\exp[ix.\xi-\eta t] \ ,
\ee
has the representation  
\be \label{G*2}
\hat{G}_{{\bf a}}(\xi,\eta) \ = \  e^{\eta}/[e^\eta-1+e(\xi)^*q(\xi,\eta)e(\xi)] \ .
\ee

The solution to (\ref{D2}) can be generated by a convergent perturbation expansion. Let $\mathcal{H}(\Om)$ be the Hilbert space of measurable functions $\psi:\Om\ra\C^d$ with norm $\|\psi\|$ given by $\|\psi\|^2=\langle \ |\psi(\cdot)|^2 \ \rangle$. We define an operator $T_{\xi,\eta}$  on  $\mathcal{H}(\Om)$  as follows: For any $g\in \mathcal{H}$, let 
$\psi(\xi,\eta,\om)$ be  the solution to the equation
\be \label{F2}
\frac{1}{\La}\left[e^{\eta} \ \psi(\xi,\eta,\tau_{0,1}\om)-\psi(\xi,\eta,\om)\right]+\pa_\xi^*\pa_\xi\psi(\xi,\eta,\om)=\pa^*_\xi g(\om) \ .
\ee
Then $T_{\xi,\eta} g(\cdot)=\pa_\xi\psi(\xi,\eta,\cdot)$, or more explicitly
\be \label{G2}
T_{\xi,\eta} g(\om) \ = \   \La\sum_{t=0}^\infty e^{-\eta(t+1)}\sum_{x\in \Z^d} \left\{\nabla\nabla^* G_{\La}(x,t)\right\}^*\exp[-ix.\xi]  \ g(\tau_{x,-t-1}\om),
\ee 
where $G_\La(\cdot)$ is the solution to the initial value problem
\begin{multline} \label{H2}
G_\La(x,t+1)-G_\La(x,t)+\La\nabla^*\nabla G_\La(x,t) \ = \ 0, \quad x\in\Z^d, \ t\in\Z, \ t\ge 0, \\
G_\La(x,0) \ = \ \del(x), \quad x\in\Z^d \ .
\end{multline}
Equation (\ref{H2}) has a unique solution provided $4d\La\le 1$, and the function $G_\La(x,t)$ satisfies an inequality
\be \label{I2}
0\le G_\La(x,t)\le \frac{C_d}{[\La t+1]^{d/2}}\exp\left[-\frac{\min\left\{|x|, \ |x|^2/(\La t+1)\right\}}{C_d}\right] \ ,
\ee
where $C_d>0$ is a constant depending only on dimension $d$. 
The operator $T_{\xi,\eta} $ is  bounded on $\mathcal{H}(\Om)$  with $\|T_{\xi,\eta}\|\le 1$, provided $\xi\in\R^d, \ \Re\eta>0$. 
Now on setting ${\bf a}(\cdot)=\La[I_d-{\bf b}(\cdot)]$, one sees  that (\ref{B2}) is equivalent to the equation
\be \label{J2}
\pa_\xi\Phi(\xi,\eta,\cdot)=PT_{\xi,\eta}[{\bf b}(\cdot)\pa_\xi\Phi(\xi,\eta,\cdot)]
+PT_{\xi,\eta}[{\bf b}(\cdot)] \ .
\ee
Since  $\|T_{\xi,\eta}\|\le 1$ and $\|{\bf b}(\om)\|\le 1-\la/\La, \ \om\in\Om$,  the Neumann series for the solution to (\ref{J2}) converges in $\mathcal{H}(\Om)$. 

It will be useful later to express the operator $T_{\xi,\eta}$ in its Fourier representation. To do this we use the standard notation for the Fourier transform  of a function $h:\Z^{d+1}\ra\C$ which we denote by  $h(x,t), \ x\in\Z^d,t\in\Z$. Letting $\hat{h}(\zeta,\theta), \ \zeta\in[-\pi,\pi]^d, \ \theta\in[-\pi,\pi],$ be the Fourier transform of  $h(\cdot,\cdot)$, then 
\be \label{AE2}
\hat{h}(\zeta,\theta) \ = \ \sum_{x\in\Z^d} \sum_{t\in\Z} h(x,t)e^{ix.\zeta+it\cdot\theta} \ .
\ee
The Fourier inversion formula yields
\be \label{AF2}
h(x,t) \ = \  \frac{1}{(2\pi)^{d+1}}\int_{[-\pi,\pi]^{d+1}} \hat{h}(\zeta,\theta)e^{-ix.\zeta-it\theta} \ d\zeta \ d\theta, \quad x\in\Z^d, t\in\Z \ .
\ee
Now the action of the translation group $\tau_{x,0}, \ x\in\Z^d$, on $\Om$ can be described by a set $A_1,...,A_d$ of commuting self-adjoint operators on $L^2(\Om)$, so that
\be \label{AG2}
f(\tau_{x,0}\cdot) \ = \ \exp[ix.A] f(\cdot), \quad x\in\Z^d, \ f\in L^2(\Om),
\ee
where $A=(A_1,..,A_d)$.  Similarly the action of the translation group $\tau_{0,t}, \ t\in\Z$, on $\Om$ can be described by a  self-adjoint operator $B$ on $L^2(\Om)$ which commutes with $A_1,..,A_d$, so that
\be \label{AH2}
f(\tau_{0,t}\cdot) \ = \ \exp[-itB] f(\cdot), \quad t\in\Z, \ f\in L^2(\Om),
\ee
It follows then from (\ref{G2}),(\ref{AG2}), (\ref{AH2}) that
\be \label{AI2}
T_{\xi,\eta} g(\cdot) \ = \  \frac{\La e(\xi-A)e^*(\xi-A)}{e^{\eta-iB}-1+\La e(\xi-A)^*e(\xi-A)}  \ g(\cdot) \ .
\ee

The Neumann series for the solution to (\ref{J2})  yields  a convergent perturbation expansion for the function $q(\xi,\eta)$ of (\ref{C2}).  Thus   for $m=1,2...$, let the matrix function $h_m(\xi,\eta)$ be defined for $\Re\eta> 0, \ \xi\in \R^d$, by
\be \label{W2}
h_m(\xi,\eta)=\langle \ {\bf b}(\cdot)\left[ PT_{\xi,\eta}{\bf b}(\cdot)\right]^m \ \rangle \ ,
\ee
whence (\ref{C2}), (\ref{J2}) imply that
\be \label{X2}
q(\xi,\eta) \ = \  \langle \ {\bf a}(\cdot) \ \rangle-\La \sum_{m=1}^\infty h_m(\xi,\eta) \ .
\ee
It is easy to see that the function $q(\xi,\eta)$ is $C^\infty$ for $\xi\in\R^d, \ \Re\eta>0$.  As in \cite{c2,cs} we can extend this result as follows:
\begin{proposition}
Suppose that $4d\La\le 1$ and any of the translation operators  $\tau_{{\bf e}_j,0}, \ 1\le j\le d,$ or $\tau_{0,1}$ is ergodic  on $\Om$.  Then the limit $\lim_{(\xi,\eta)\ra(0,0)} q(\xi,\eta)=q(0,0)$ exists. If any of the translation operators   is weak mixing \cite{p} on $\Om$ then  $q(\xi,\eta),  \  \xi\in\R^d,\ \Re\eta>0$, extends to a continuous function on $\xi\in\R^d, \ \Re\eta\ge 0$. 
\end{proposition}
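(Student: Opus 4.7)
The strategy is to analyze the Neumann series $q(\xi,\eta)=\langle{\bf a}(\cdot)\rangle-\La\sum_{m\ge 1}h_m(\xi,\eta)$ of (\ref{X2}) termwise. Since $\|PT_{\xi,\eta}\|\le 1$ for $\Re\eta>0$ and multiplication by ${\bf b}(\cdot)$ on $\mathcal{H}(\Om)$ has operator norm at most $1-\la/\La<1$, the series converges uniformly in $(\xi,\eta)$, and it therefore suffices to prove, for each fixed $m$, that $h_m(\xi,\eta)$ is continuous at $(0,0)$ under the ergodicity hypothesis and that it extends continuously to $\{\xi\in\R^d,\ \Re\eta\ge 0\}$ under the weak mixing hypothesis.

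To this end, I would exploit the spectral representation (\ref{AI2}). By the spectral theorem applied to the commuting self-adjoint generators $A_1,\dots,A_d,B$, there is a joint projection-valued measure $E(d\mu\,d\nu)$ on $[-\pi,\pi]^{d+1}$ with respect to which $T_{\xi,\eta}$ acts as multiplication by the symbol
\[
\frac{\La\,e(\xi-\mu)e^*(\xi-\mu)}{e^{\eta-i\nu}-1+\La|e(\xi-\mu)|^2}.
\]
Using $|e(\zeta)|^2\le 4d$ together with $4d\La\le 1$, the denominator cannot vanish for $\Re\eta>0$, and for $\Re\eta=0$ it vanishes precisely on the single-point set $S(\xi,\eta)=\{(\mu,\nu)\in[-\pi,\pi]^{d+1}:\xi-\mu\in 2\pi\Z^d,\ \Im\eta-\nu\in 2\pi\Z\}$; the other conceivable zero at $\La|e(\xi-\mu)|^2=2$ is excluded by $4d\La\le 1$. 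Off $S(\xi,\eta)$ the symbol is jointly continuous in $(\xi,\eta,\mu,\nu)$ and uniformly bounded in operator norm by $1$.

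Next, the hypothesis is translated into spectral statements on the range $PL^2(\Om)$ of the projection $P$. If $\tau_{\mathbf{e}_j,0}$ (resp.\ $\tau_{0,1}$) is ergodic, then $A_j$ (resp.\ $B$) has no eigenfunction at $0$ in $PL^2$, so $E(\{(0,0)\})g=0$ for every $g\in PL^2$, i.e.\ $E$ annihilates $S(0,0)$. If one of these generators is weak mixing, the corresponding operator $A_j$ or $B$ has purely continuous spectrum on $PL^2$, so the spectral measure $\|E(\cdot)g\|^2$ has no atoms in the corresponding coordinate, and in particular $E(S(\xi,\eta))g=0$ for every $\xi\in\R^d$ and every $\eta$ with $\Re\eta=0$.

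Given these facts, for any sequence $(\xi_n,\eta_n)$ with $\Re\eta_n>0$ converging to a limit $(\xi_\infty,\eta_\infty)$ in the appropriate regime, the symbol converges pointwise off $S(\xi_\infty,\eta_\infty)$ while remaining uniformly bounded by $1$ in operator norm; dominated convergence in the spectral decomposition then gives $PT_{\xi_n,\eta_n}g\to PT_{\xi_\infty,\eta_\infty}g$ in $L^2(\Om)$ for every $g\in PL^2(\Om;\C^d)$, where $PT_{\xi_\infty,\eta_\infty}$ is the bounded operator defined by the limiting symbol. Iterating this strong convergence through the $m$ applications of $PT_{\xi,\eta}$ composed with multiplication by ${\bf b}(\cdot)$, whose combined operator norm is at most $1-\la/\La<1$, and then pairing with ${\bf b}(\cdot)$ in $L^2$, produces the desired continuity of $h_m$ and hence of $q$. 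I expect the spectral step to be the main obstacle: both the careful use of $4d\La\le 1$ to confine the singular set to $S(\xi,\eta)$, and the translation of the ergodicity or weak-mixing assumption on a single generator into annihilation of $S(\xi,\eta)$ by the full joint spectral measure on $PL^2(\Om)$.
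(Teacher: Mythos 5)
Your approach is, in substance, the one the paper relies on: the paper's own ``proof'' is only a citation of Lemma 2.5 of \cite{c2} and Proposition 2.1 of \cite{cs}, and those arguments proceed exactly as you propose --- term-by-term treatment of the uniformly convergent Neumann series (\ref{W2}), (\ref{X2}), the joint spectral measure of the commuting generators $A_1,\dots,A_d,B$ from (\ref{AI2}), the observation that under $4d\La\le 1$ the symbol of $T_{\xi,\eta}$ has norm at most $1$ up to $\Re\eta=0$ and is singular only at the single point $\mu\equiv\xi$, $\nu\equiv\Im\eta \pmod{2\pi}$ (the alternative zero $\La|e(\xi-\mu)|^2=2$ being excluded), the translation of ergodicity (resp.\ weak mixing) of one generator into the vanishing of the relevant atom of the spectral measure on $PL^2(\Om)$, and dominated convergence.

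One step in your iteration needs to be stated more carefully. You prove $PT_{\xi_n,\eta_n}g\to PT_{\xi_\infty,\eta_\infty}g$ only for $g\in PL^2(\Om;\C^d)$, but in the product $[PT_{\xi,\eta}\mathbf{b}(\cdot)]^m$ the operator $T_{\xi,\eta}$ is applied to vectors of the form $\mathbf{b}(\cdot)\psi$, which in general have a non-zero mean; for the mean (constant) component the convergence genuinely fails, since, as Remark 2.1 of the paper records, $T_{\xi,\eta}$ applied to a constant vector has no continuous extension as $(\xi,\eta)\to(0,0)$ or to $\Re\eta=0$. The gap closes with one observation: by (\ref{R2}), $T_{\xi,\eta}$ maps constant vectors to deterministic ($\om$-independent) vectors, which the subsequent projection annihilates, so $PT_{\xi,\eta}\mathbf{b}(\cdot)=PT_{\xi,\eta}P\mathbf{b}(\cdot)$ as operators on $\mathcal{H}(\Om)$. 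With this identity every application of $T_{\xi,\eta}$ in the iteration acts on $P\mathcal{H}(\Om)$, where your spectral statement applies (ergodic case: the atom at $(0,0)$ is the projection onto full-group invariants, which by ergodicity of a single generator are the constants and hence are killed by $P$; weak mixing case: no atoms at all in the corresponding coordinate, for every boundary point), and the telescoping argument with the uniform bounds $\|T_{\xi,\eta}\|\le1$ and $\|\mathbf{b}(\om)\|\le 1-\la/\La$ then yields continuity of each $h_m$ and hence of $q$. With that single addition your proof is complete and coincides in substance with the cited arguments.
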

\begin{proof}
We follow the same argument as in Lemma 2.5 of \cite{c2} and Proposition 2.1 of \cite{cs}.
\end{proof}
\begin{remark}
Note that the projection operator $P$ in equation  (\ref{B2}) plays a critical role in establishing  continuity. For a constant function $g(\cdot)\equiv v\in\C^d$, one has
\be \label{R2}
T_{\xi,\eta} g(\cdot) \ = \  [e(\xi)^*v]e(\xi) \big/ [(e^\eta-1)/\La+ e(\xi)^*e(\xi)] \ ,
\ee
which does not extend to a continuous function of $(\xi,\eta)$ on the set $\xi\in\R^d, \ \Re\eta\ge 0$.
\end{remark}

Next we show that the function $q(\xi,\eta)$ with domain $\xi\in\R^d, \  \Re\eta>0$, can be extended to complex $\xi=\Re\xi+i\Im\xi\in\C^d$ with small imaginary part.
\begin{lem}
The $C^\infty$ operator valued function $(\xi,\eta)\ra T_{\xi,\eta}$ with domain $\{(\xi,\eta):\xi\in\R^d, \ \Re\eta>0\}$ and range  the space of bounded linear operators $\mathcal{B}[\mathcal{H}(\Om)]$ on  $\mathcal{H}(\Om)$, has an analytic continuation to a region $\{(\xi,\eta)\in\C^{d+1}: 0<\Re\eta<\La, \ |\Im\xi|<C_1\sqrt{\Re\eta/\La}\}$, where $C_1$ is a constant depending only on $d$. For $(\xi,\eta)$ in this region the norm of $T_{\xi,\eta}$ satisfies the inequality $\|T_{\xi,\eta}\|\le 1+C_2 |\Im\xi|^2/[
\Re\eta/\La]$, where the constant $C_2$ depends only on $d$.  
\end{lem}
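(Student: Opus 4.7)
The plan is to analytically continue $T_{\xi,\eta}$ via the Fourier representation (\ref{AI2}). Since the translation operators act on $L^2(\Om)$ by unitaries $e^{ix\cdot A}$ and $e^{-itB}$ with commuting self-adjoint generators $A=(A_1,\ldots,A_d)$ and $B$, the assignments $\xi\mapsto e_j(\xi-A):=e^{-ie_j\cdot\xi}e^{ie_j\cdot A}-1$ and $\xi\mapsto e_j^*(\xi-A):=e^{ie_j\cdot\xi}e^{-ie_j\cdot A}-1$ are entire operator-valued functions of $\xi\in\C^d$ with norm at most $e^{|\Im\xi|}+1$, and the numerator $\La\,e(\xi-A)e^*(\xi-A)$ of (\ref{AI2}) is therefore entire. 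The problem thus reduces to bounded invertibility of the denominator
\[
D(\xi,\eta)\;:=\;e^{\eta-iB}-1+\La\,e^*(\xi-A)e(\xi-A),
\]
which by joint functional calculus in $(A,B)$ becomes a pointwise estimation problem on the joint spectrum $\sigma(A,B)\subset\R^d\times\R$.

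The key pointwise input is the sharp lower bound at real $\xi=\zeta$: minimizing the scalar $\bigl|D(\zeta,\eta;a,b)\bigr|^2=\bigl|(e^{\eta-ib}-1)+\La|e(\zeta-a)|^2\bigr|^2$ over $b$ (the minimum is attained at $b=\Im\eta$) and using the hypothesis $\La|e(\zeta-a)|^2\le 4d\La\le 1$ gives
\[
|D(\zeta,\eta;a,b)|\;\ge\;(e^{\Re\eta}-1)+\La|e(\zeta-a)|^2\;\ge\;\Re\eta+\La|e(\zeta-a)|^2.
\]
This recovers the baseline $\|T_{\zeta,\eta}\|\le 1$ through the pointwise ratio $\La|e(\zeta-a)|^2/|D(\zeta,\eta;a,b)|\le 1$. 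For complex $\xi=\zeta+i\mu$ I then exploit two facts: (i) the identity $|e_j(\xi-a)|\,|e_j^*(\xi-a)|=2(\cosh(e_j\cdot\mu)-\cos(e_j\cdot(\zeta-a)))$, manifestly even in $\mu$, which combined with Cauchy--Schwarz yields $\La|e(\xi-a)||e^*(\xi-a)|\le\La|e(\zeta-a)|^2+C\La|\mu|^2$ with no linear-in-$\mu$ contribution in the geometric mean; and (ii) the Taylor expansion
\[
D(\xi,\eta;a,b)-D(\zeta,\eta;a,b)=2i\La\sum_j\sin(e_j\cdot(\zeta-a))\sinh(e_j\cdot\mu)-2\La\sum_j(1-\cos(e_j\cdot(\zeta-a)))(\cosh(e_j\cdot\mu)-1),
\]
whose imaginary linear-in-$\mu$ part carries a factor $\sin(e_j\cdot(\zeta-a))$, bounded in modulus by $|e_j(\zeta-a)|$.

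Combining these, the imaginary linear correction to $D$ has modulus at most $2\La|\mu||e(\zeta-a)|$, which by AM--GM is absorbed into $\La|e(\zeta-a)|^2+\Re\eta$ at the cost of an $O(\La|\mu|^2/\Re\eta)$ relative perturbation, while the real quadratic piece is directly $O(\La|\mu|^2)$. This yields $|D(\xi,\eta;a,b)|\ge c\,|D(\zeta,\eta;a,b)|\bigl(1-O(\La|\mu|^2/\Re\eta)\bigr)$ uniformly on $\sigma(A,B)$ for $|\mu|^2\le C_1^2\Re\eta/\La$, whence
\[
\|T_{\xi,\eta}\|\;=\;\sup_{(a,b)\in\sigma(A,B)}\frac{\La|e(\xi-a)||e^*(\xi-a)|}{|D(\xi,\eta;a,b)|}\;\le\;1+\frac{C_2|\Im\xi|^2}{\Re\eta/\La}
\]
as claimed, and analyticity of $T_{\xi,\eta}$ on the region follows. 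The main technical obstacle is extracting quadratic order from the linear-in-$\mu$ imaginary correction to $D$: a naive triangle-inequality bound only gives an error of order $|\mu|/\sqrt{\Re\eta/\La}$, and the sharper quadratic estimate is obtained by combining the factor $|\sin(e_j\cdot(\zeta-a))|\le|e_j(\zeta-a)|$ with the tight lower bound $|D(\zeta,\eta;a,b)|\ge\Re\eta+\La|e(\zeta-a)|^2$ through a Pythagorean expansion of $|D(\xi,\eta;a,b)|^2$.
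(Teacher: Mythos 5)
Your proposal is correct in substance but takes a genuinely different route from the paper's proof of Lemma 2.1. The paper gets the analytic continuation from the real-space kernel representation (\ref{G2}), using that $|\na\na^*G_\La(x,t)|$ is bounded by $(\La t+1)^{-1}$ times the Gaussian bound (\ref{I2}) together with the elementary estimate (\ref{S2}); and it gets the norm bound from an energy estimate on the defining equation (\ref{F2}) (multiply by $\bar\psi(\xi,\eta,\tau_{0,1}\cdot)$, use that $I-\La\pa_\xi^*\pa_\xi\ge 0$ when $4d\La\le 1$ to conclude $\|T_{\xi,\eta}\|\le 1$ for real $\xi$, and then perturb to complex $\xi$ as in \cite{cs}). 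You instead invoke the spectral representation (\ref{AI2}) coming from (\ref{AG2})--(\ref{AH2}) and reduce both analyticity and the norm bound to a pointwise multiplier estimate over the joint spectrum of $(A,B)$; this is precisely the computation the paper itself carries out in Lemma 5.1 (inequality (\ref{E4}), proved via (\ref{F4})--(\ref{I4})) for the i.i.d. environment, and since (\ref{AI2}) holds for a general environment the same pointwise bound indeed yields Lemma 2.1. Your route is more quantitative and makes the role of the region $|\Im\xi|<C_1\sqrt{\Re\eta/\La}$ transparent; the paper's route avoids functional calculus and works directly from the resolvent equation.

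Three points need tightening. (a) The real part of your expansion of $D(\xi,\eta)-D(\zeta,\eta)$ should be $-2\La\sum_j\cos(\mathbf{e}_j\cdot(\zeta-a))\,(\cosh(\mathbf{e}_j\cdot\mu)-1)$, not $-2\La\sum_j(1-\cos(\mathbf{e}_j\cdot(\zeta-a)))(\cosh(\mathbf{e}_j\cdot\mu)-1)$; this is immaterial since either expression is $O(\La|\mu|^2)$. (b) Cauchy--Schwarz goes the wrong way for bounding the product of Euclidean norms $|e(\xi-a)|\,|e^*(\xi-a)|$; the correct way to see that the linear-in-$\mu$ terms cancel is to expand $|e(\xi-a)|^2|e(\bar\xi-a)|^2=(s+L+O(|\mu|^2))(s-L+O(|\mu|^2))$ with $s=|e(\zeta-a)|^2$ and $L=\sum_j 2\mu_j(1-\cos(\mathbf{e}_j\cdot(\zeta-a)))$, and drop $-L^2\le 0$, which gives $\La|e(\xi-a)||e^*(\xi-a)|\le \La s+C\La|\mu|^2$ as you claim. (c) The absorption step is the only delicate one: an intermediate bound $|D(\xi)|\ge c\,|D(\zeta)|(1-O(\La|\mu|^2/\Re\eta))$ is useless if $c<1$ is a fixed constant, since the real-$\xi$ ratio $\La s/|D(\zeta)|$ can be arbitrarily close to $1$; you need it with $c=1$, or an equivalent bookkeeping. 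This is true, but requires either a short case analysis on $\Im D(\zeta)$ (in the dangerous regime where the phase of $e^{\eta-ib}$ nearly cancels the linear imaginary correction one has $\Re D(\zeta)\gtrsim \Re\eta+\La s\ge 2\sqrt{\Re\eta\,\La s}$ while $|\Delta_I|^2\lesssim \La|\mu|^2\,\La s$, so the Pythagorean loss $\Delta_I^2/\Re D(\zeta)$ is indeed $O(\La|\mu|^2/\Re\eta)\,|D(\zeta)|$), or the paper's splitting (\ref{G4})--(\ref{H4}), where the cross term $\La|\mu|\sqrt{s}$ is bounded by $C^2(\La|\mu|^2/\Re\eta)\La s+\tfrac14\Re\eta$ and the sacrificed fixed fraction of $\Re\eta$ is harmless because the numerator contains no $\Re\eta$ term. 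With (b) and (c) made explicit your argument is complete.
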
 
\begin{proof}
That there is an analytic continuation to the region $\{\xi\in\C^d: 0<\Re\eta<\La, \ |\Im\xi|<C_1\sqrt{\Re\eta/\La}\}$ is a consequence of the fact that $|\na\na^*G_\La(x,t)|$ is bounded by $(\La t+1)^{-1}$ times the RHS of (\ref{I2}).  For $(\xi,\eta)$ in this region one has that
\be  \label{S2}
\Re[-\eta(t+1)-ix\cdot\xi] \ \le  \sup_{|\theta|<1}[-\theta^2\La (t+1)+C_1\theta |x|] 
\le \ \min\{C_1|x|, \ C_1^2|x|^2/4\La(t+1)\} \ .
\ee
Hence using the representation (\ref{G2}) for $T_{\xi,\eta}$, we see  that the analytic continuation extends to any region $\{\xi\in\C^d: 0<\Re\eta<\La, \ |\Im\xi|<C_1\sqrt{\Re\eta/\La}\}$ provided $C_1$ satisfies the inequalities $C_1<C_d^{-1}, \ C_1^2<4C_d^{-1}$, where $C_d$ is the constant in (\ref{I2}). 

The bound on $\|T_{\xi,\eta}\|$ can be obtained from (\ref{F2}).  Thus on multiplying (\ref{F2}) by $\bar{\psi}(\xi,\eta,\tau_{0,1}\om)$ we see that
\be \label{T2}
\frac{e^{\Re\eta}}{\La} \langle \ |\psi(\xi,\eta,\cdot)|^2 \ \rangle \ \le \  \frac{1}{\La}|\langle \ \bar{\psi}(\xi,\eta,\tau_{0,1}\cdot)[I-\La\pa_\xi^*\pa_\xi]\psi(\xi,\eta,\cdot) \ \rangle| + |\langle \ \bar{\psi}(\xi,\eta,\tau_{0,1}\cdot)\pa^*_\xi g(\cdot) \ \rangle| \ .
\ee
Since $4d\La\le 1$ it follows that for $\xi\in\R^d$  the operator $I-\La\pa^*_\xi\pa_\xi$ is symmetric  non-negative definite. Hence if $\xi\in\R^d$ one has that
\begin{multline} \label{U2}
|\langle \ \bar{\psi}(\xi,\eta,\tau_{0,1}\cdot)[I-\La\pa_\xi^*\pa_\xi]\psi(\xi,\eta,\cdot) \ \rangle| \ \le  \\
\frac{1}{2}\langle \ \bar{\psi}(\xi,\eta,\tau_{0,1}\cdot)[I-\La\pa_\xi^*\pa_\xi]\psi(\xi,\eta,\tau_{0,1}\cdot) \ \rangle+
\frac{1}{2}\langle \ \bar{\psi}(\xi,\eta,\cdot)[I-\La\pa_\xi^*\pa_\xi]\psi(\xi,\eta,\cdot) \ \rangle \ .
\end{multline}
Similarly one has that
\be \label{V2}
|\langle \ \bar{\psi}(\xi,\eta,\tau_{0,1}\cdot)\pa^*_\xi g(\cdot) \ \rangle|  \ \le
\frac{1}{2}\langle \ \bar{\psi}(\xi,\eta,\tau_{0,1}\cdot)[\pa_\xi^*\pa_\xi]\psi(\xi,\eta,\tau_{0,1}\cdot) \ \rangle
+\frac{1}{2}\|g(\cdot)\|^2 \ .
\ee
We conclude from (\ref{T2})-(\ref{V2}) that $\|T_{\xi,\eta}\|\le 1$ provided $\xi\in\R^d$ and $\Re\eta>0$. This argument can then be extended as in Lemma 2.1 of \cite{cs} to $\xi\in\C^d$.
\end{proof}
\begin{corollary}
The $d\times d$ matrix function $q(\xi,\eta)$ with domain $\{(\xi,\eta): \xi\in\R^d, \ \Re\eta>0\}$, has an analytic continuation to a region $\{\xi\in\C^d: 0<\Re\eta<\La, \ |\Im\xi|<C_1\sqrt{\la\Re\eta/\La^2}\}$, where $C_1$ is a constant depending only on $d$. There is a constant $C_2$ depending only on $d$ such that for $\xi$ in this region, 
\be \label{Y2}
\|q(\xi,\eta)-q(\Re\xi,\eta)\| \ \le \ \frac{C_2\La^2}{\la} \frac{ |\Im\xi|}{\sqrt{\Re\eta/\La}} \ .
\ee
\end{corollary}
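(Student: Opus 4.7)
The plan is to extract both the analytic continuation and the Lipschitz estimate from the Neumann series $(\ref{X2})$ for $q$, combined with the operator bounds for $T_{\xi,\eta}$ provided by Lemma 2.1. Writing ${\bf a}(\cdot)=\La[I_d-{\bf b}(\cdot)]$ and letting $K_{\xi,\eta}:=PT_{\xi,\eta}M_{{\bf b}}$, where $M_{{\bf b}}$ denotes multiplication by ${\bf b}(\cdot)$, the representation $(\ref{X2})$ becomes $q(\xi,\eta)=\langle {\bf a}(\cdot)\rangle-\La\,\Av{{\bf b}(\cdot)\bigl[(I-K_{\xi,\eta})^{-1}-I\bigr]\cdot 1}$, valid whenever $\|K_{\xi,\eta}\|<1$.

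For the analytic continuation, Lemma 2.1 gives $\|T_{\xi,\eta}\|\le 1+C_2|\Im\xi|^2/(\Re\eta/\La)$ on $|\Im\xi|<C_1\sqrt{\Re\eta/\La}$. Taking the constant $C_1$ in the statement of the corollary sufficiently small, the subregion $|\Im\xi|<C_1\sqrt{\la\Re\eta/\La^2}$ sits inside the analyticity region of $T_{\xi,\eta}$ and satisfies $(1-\la/\La)\|T_{\xi,\eta}\|\le 1-\la/(2\La)$, so $\|K_{\xi,\eta}\|\le 1-\la/(2\La)$. The Neumann series for $q$ converges absolutely and uniformly on compact subsets, so the $\xi$-analyticity of $T_{\xi,\eta}$ transfers to $q$. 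These bounds also give $\|(I-K_{\xi,\eta})^{-1}\|\le 2\La/\la$ and the uniform sup estimate $\|q(\xi,\eta)\|\le C\La^2/\la$ throughout the subregion.

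For the Lipschitz bound I would apply the resolvent identity:
\begin{equation*}
q(\xi,\eta)-q(\Re\xi,\eta)\ =\ -\La\,\Av{{\bf b}(\cdot)(I-K_{\xi,\eta})^{-1}\bigl[K_{\xi,\eta}-K_{\Re\xi,\eta}\bigr](I-K_{\Re\xi,\eta})^{-1}\cdot 1}.
\end{equation*}
Both resolvents have norm at most $2\La/\la$ by the previous step, and $\|K_{\xi,\eta}-K_{\Re\xi,\eta}\|\le (1-\la/\La)\|T_{\xi,\eta}-T_{\Re\xi,\eta}\|$. The whole matter therefore reduces to an estimate of the form $\|T_{\xi,\eta}-T_{\Re\xi,\eta}\|\le C|\Im\xi|/\sqrt{\Re\eta/\La}$, which I would derive by applying Cauchy's integral estimate to the analytic operator-valued map $\xi\mapsto T_{\xi,\eta}$ on the \emph{larger} disc from Lemma 2.1 — where $\|T_{\xi,\eta}\|$ is controlled by a universal constant — and then integrating along the segment from $\Re\xi$ to $\xi$.

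The main technical point is this last step: the Cauchy estimate must exploit the larger analyticity disc of radius $\sqrt{\Re\eta/\La}$ given by Lemma 2.1, not the smaller one of radius $\sqrt{\la\Re\eta/\La^2}$ on which the Neumann series for $q$ itself converges; using only the smaller disc for the Cauchy bound on $T_{\xi,\eta}-T_{\Re\xi,\eta}$ would spoil the Lipschitz constant by an additional factor $\sqrt{\La/\la}$. If a more explicit estimate is preferred, $\|T_{\xi,\eta}-T_{\Re\xi,\eta}\|$ can instead be bounded directly from the kernel representation $(\ref{G2})$ using $|e^{-ix\cdot\xi}-e^{-ix\cdot\Re\xi}|\le |x||\Im\xi|\exp(|x||\Im\xi|)$ and the Gaussian decay $(\ref{I2})$ of $\nabla\nabla^* G_{\La}(x,t)$.
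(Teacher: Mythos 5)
Your proposal is essentially the paper's own route: analyticity and the sup bound $\|q(\xi,\eta)\|\le C\La^2/\la$ come from the perturbation series (\ref{W2})--(\ref{X2}) together with Lemma 2.1 and $\|{\bf b}(\om)\|\le 1-\la/\La$, and everything is reduced to the single estimate $\|T_{\xi,\eta}-T_{\Re\xi,\eta}\|\le C|\Im\xi|/\sqrt{\Re\eta/\La}$, which is exactly the paper's inequality (\ref{Z2}). The paper gets (\ref{Z2}) ``by arguing as in Lemma 2.1,'' i.e.\ by the direct kernel estimate from (\ref{G2}) and (\ref{I2}) that you offer as your fallback; your primary suggestion --- a Cauchy estimate for the analytic operator-valued map $\xi\mapsto T_{\xi,\eta}$ on the larger disc of radius $\sim C_1\sqrt{\Re\eta/\La}$, where Lemma 2.1 bounds $\|T_{\xi,\eta}\|$ by a dimensional constant, followed by integration along the segment from $\Re\xi$ to $\xi$ --- is a correct and slightly cleaner way to obtain the same bound, and your remark that the smaller radius $\sqrt{\la\Re\eta/\La^2}$ must not be used for the Cauchy estimate is exactly the right point to watch. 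The one quantitative shortfall is in the final assembly: bounding \emph{both} resolvents in your identity by $2\La/\la$ in operator norm gives $\|q(\xi,\eta)-q(\Re\xi,\eta)\|\le C(\La^3/\la^2)\,|\Im\xi|/\sqrt{\Re\eta/\La}$, which is weaker than (\ref{Y2}) by a factor $\La/\la$. To recover the constant as stated one should avoid the crude operator-norm bound $\La/\la$ on the resolvent factors, keeping for instance $(I-K_{\Re\xi,\eta})^{-1}v=v+\pa_\xi\Phi(\Re\xi,\eta,\cdot)v$ and invoking the energy estimate $\|\pa_\xi\Phi(\Re\xi,\eta,\cdot)v\|\le\sqrt{\La/\la}\,|v|$ (this is how the analogous reduction (\ref{E6}) is organized in Lemma 3.1), with a corresponding treatment of the left factor via the adjoint corrector. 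For the later uses of the corollary (e.g.\ Corollary 2.2) a constant depending on $\La/\la$ is all that is needed, so the loss is harmless in context, but as a proof of (\ref{Y2}) exactly as stated your estimate needs this tightening.
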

\begin{proof} The fact that $q(\xi,\eta)$ has an analytic continuation follows from the representations (\ref{W2}), (\ref{X2}), Lemma 2.1 and the matrix norm bound $\|{\bf b}(\om)\|\le 1-\la/\La, \ \om\in\Om$. On summing the perturbation series (\ref{X2}), we conclude that for $\xi$ satisfying  $ |\Im\xi|<C_1\sqrt{\la\Re\eta/\La^2}$, then $\|q(\xi,\eta)\|\le  C_2\La^2/\la$ for a constant $C_2$ depending only on $d$, provided $C_1$ is chosen sufficiently small, depending only on $d$. By arguing as in Lemma 2.1 we also see that there are positive constants $C_1, C_2$ such  that 
\be \label{Z2}
\|T_{\xi,\eta}-T_{\Re\xi,\eta}\| \ \le \  C_2 |\Im\xi|/\sqrt{\Re\eta/\La} \ , \quad \xi\in\C^d, \  |\Im\xi|<C_1\sqrt{\Re\eta/\La} \ .
\ee
The inequality (\ref{Y2}) follows from (\ref{Z2}).
\end{proof}
It follows from Corollary 2.1 that for $\xi\in\C^d, \ \eta\in\C$ with fixed $\Im\xi\in\R^d, \ \Re\eta>0$ satisfying  $0<\Re\eta<\La, \ |\Im\xi|<C_1\sqrt{\la\Re\eta/\La^2}$, the periodic matrix function $(\Re\xi,\Im\eta)\ra q(\xi,\eta)$ on $\R^{d+1}$ with fundamental region $[-\pi,\pi]^{d+1}$, is bounded.  
\begin{corollary} There exist positive constants $C_1,C_2$ depending only on $d$ and $\La/\la$ such that  
\be \label{AD2}
|e^\eta-1+e(\bar{\xi})^*q(\xi,\eta)e(\xi)| \ \ge \ C_2[ \ |\eta|+ \La|e(\Re\xi)|^2] \ ,
\ee
provided $0<\Re\eta<\La, \  |\Im\xi|<C_1\sqrt{\Re\eta/\La}$.
\end{corollary}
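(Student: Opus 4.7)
The strategy is to first establish the bound on the real slice $\xi\in\R^d$ via an energy identity for the corrector $\Phi(\xi,\eta,\cdot)$, and then extend to complex $\xi$ with $|\Im\xi|$ in the allowed range by a perturbative argument using the Lipschitz estimate (\ref{Y2}) of Corollary~2.1.

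For real $\xi$, introduce the corrected field
\be
F \ = \ 1+\Phi(\xi,\eta,\cdot)\,e(\xi) \ \in \ L^2(\Om), \quad \langle F \rangle = 1,
\ee
and multiply (\ref{B2}) on the right by $e(\xi)$ and pair with $\bar F$ in $L^2(\Om)$. Using the adjoint relation for $\pa_\xi$ together with $P\bar F=\bar F-1$ and $\pa_\xi 1=e(\xi)$, this yields the key identity
\be
e^\eta-1+e(\xi)^*q(\xi,\eta)e(\xi) \ = \ e^\eta\Av{\bar F\cdot F(\tau_{0,1}\cdot)} - \Av{|F|^2} + \Av{(\pa_\xi F)^*\mathbf{a}(\pa_\xi F)}.
\ee
The elliptic term is real non-negative, and Jensen's inequality applied to $\langle\pa_\xi F\rangle=e(\xi)$ gives the lower bound $\langle(\pa_\xi F)^*\mathbf{a}\pa_\xi F\rangle\ge\la|e(\xi)|^2$. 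The Cauchy--Schwarz inequality and translation invariance yield $|\langle\bar F\cdot F(\tau_{0,1}\cdot)\rangle|\le\langle|F|^2\rangle$, and the norm bound $\|T_{\xi,\eta}\|\le 1$ from Lemma~2.1 controls $\langle|\Phi e(\xi)|^2\rangle$. Separating real and imaginary parts and using that $|e^\eta-1|$ is comparable to $|\eta|$ on the bounded parameter strip $0<\Re\eta<\La$, $|\Im\eta|\le\pi$, yields
\be
\big|e^\eta-1+e(\xi)^*q(\xi,\eta)e(\xi)\big| \ \ge \ c_0\big[|\eta|+\La|e(\xi)|^2\big], \quad \xi\in\R^d,
\ee
for a constant $c_0$ depending only on $d$ and $\La/\la$.

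For the extension to complex $\xi$ with $|\Im\xi|<C_1\sqrt{\Re\eta/\La}$, decompose
\begin{multline*}
e(\bar\xi)^*q(\xi,\eta)e(\xi)-e(\Re\xi)^*q(\Re\xi,\eta)e(\Re\xi) \\
 = [e(\bar\xi)-e(\Re\xi)]^*q(\xi,\eta)e(\xi)+e(\Re\xi)^*[q(\xi,\eta)-q(\Re\xi,\eta)]e(\xi)+e(\Re\xi)^*q(\Re\xi,\eta)[e(\xi)-e(\Re\xi)].
\end{multline*}
The differences $e(\xi)-e(\Re\xi)$ and $e(\bar\xi)-e(\Re\xi)$ are each $O(|\Im\xi|)$ by smoothness of $\zeta\mapsto e(\zeta)$, the norm bound $\|q(\xi,\eta)\|\le C\La^2/\la$ holds on the analyticity region by Corollary~2.1, and (\ref{Y2}) gives $\|q(\xi,\eta)-q(\Re\xi,\eta)\|\le C(\La^2/\la)|\Im\xi|/\sqrt{\Re\eta/\La}$. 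By Young's inequality applied to each term and the constraint $|\Im\xi|^2\le C_1^2\,\Re\eta/\La$, the total perturbation is bounded by $\tfrac{1}{2}c_0\La|e(\Re\xi)|^2+\tfrac{1}{2}c_0|\eta|$ once $C_1$ is chosen sufficiently small, depending only on $d$ and $\La/\la$, and the bound of the corollary follows by absorbing into the real-slice estimate.

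The main technical obstacle is the first step, namely extracting a lower bound proportional to the full modulus $|\eta|$ rather than merely $\Re\eta$ from the energy identity. The time-correlation factor $e^\eta\langle\bar F\cdot F(\tau_{0,1}\cdot)\rangle$ couples the phase of $\eta$ to the temporal correlation structure of the corrector, and $\langle|\Phi e(\xi)|^2\rangle$ is not uniformly bounded as $\Re\eta\to 0$. The $\|T_{\xi,\eta}\|\le 1$ bound from Lemma~2.1 is the essential tool here, providing the quantitative control needed to dominate the time-correlation contribution by a combination of $|\eta|$ and the elliptic lower bound $\la|e(\xi)|^2$.
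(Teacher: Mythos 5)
Your overall architecture is the same as the paper's: establish (\ref{AD2}) on the real slice $\xi\in\R^d$, then perturb to $|\Im\xi|<C_1\sqrt{\Re\eta/\La}$ using the analyticity and Lipschitz bound of Corollary 2.1. The second half of your argument (the three-term decomposition, $\|q\|\le C\La^2/\la$, (\ref{Y2}), Young's inequality and absorption for small $C_1$) is fine and is exactly how the paper uses Corollary 2.1. Your energy identity is also correct: pairing (\ref{B2})$\,e(\xi)$ against $\bar F$ with $F=1+\Phi e(\xi)$, $\av{F}=1$, does give
$e^\eta-1+e(\xi)^*q(\xi,\eta)e(\xi)=e^\eta\av{\bar F F(\tau_{0,1}\cdot)}-\av{|F|^2}+\av{(\pa_\xi F)^*\mathbf{a}\,\pa_\xi F}$,
and Jensen gives $\av{(\pa_\xi F)^*\mathbf{a}\,\pa_\xi F}\ge\la|e(\xi)|^2$.

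The gap is in the step you yourself flag as the main obstacle: extracting the full $|\eta|$ (not merely $\Re\eta$) from this identity. The ingredients you list --- Cauchy--Schwarz, translation invariance, and $\|T_{\xi,\eta}\|\le 1$ --- do not rule out the degenerate configuration in which the time correlation carries exactly the opposite phase of $e^\eta$, i.e. $\av{\bar F F(\tau_{0,1}\cdot)}\approx e^{-i\Im\eta}\av{|F|^2}$. In that case the right-hand side of your identity is $(e^{\Re\eta}-1)\av{|F|^2}+\av{(\pa_\xi F)^*\mathbf{a}\,\pa_\xi F}$, which is real, and nothing you have written forces it to be of size $|\Im\eta|$ when $\Re\eta\ll|\Im\eta|$; note also that $\|T_{\xi,\eta}\|\le 1$ only controls $\|\pa_\xi\Phi\,e(\xi)\|$ (hence the dissipation term), not $\av{|F|^2}=1+\|\Phi e(\xi)\|^2$, which is \emph{not} uniformly bounded as $\Re\eta\to0$ (cf.\ the remark containing (\ref{S*2})). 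So ``separating real and imaginary parts'' from this single pairing does not yield (\ref{AD2}); one needs additional structural input on how the phase of $\av{\bar F F(\tau_{0,1}\cdot)}$ is tied to $\eta$ --- for instance the spectral representation (\ref{AI2}) of $T_{\xi,\eta}$ in the commuting self-adjoint operators $A,B$, which gives sign/monotonicity information in $\eta$ for $e(\xi)^*q(\xi,\eta)e(\xi)$, or a second pairing against $\bar F(\tau_{0,1}\cdot)$ combined with the first. This is precisely the content of Lemma 2.7 and Lemma 2.8 of the reference \cite{c2}, which the paper invokes for the real-slice bound; your proposal asserts the conclusion of those lemmas but does not prove it.
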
 
\begin{proof}
The inequality (\ref{AD2}) follows from Corollary 2.1 and  Lemma 2.7, Lemma 2.8 of \cite{c2}.
\end{proof}
\begin{proof}[Proof of Theorem 1.1-discrete time case]
Taking $h(x)=f(\ve x),$ we have from (\ref{D2}) that
\be \label{H*2}
\langle \ u_\ve(x/\ve,t/\ve^2,\cdot) \ \rangle  = 
  \frac{1}{(2\pi)^{d+1}}\int_{[-\pi/\ve,\pi/\ve]^{d}}\int_{-\pi/\ve^2}^{\pi/\ve^2} 
\frac{\ve^2\hat{f}_\ve(\xi)e^{-i\xi.x+\eta(t+\ve^2)}}{e^{\ve^2\eta}-1+e(\ve\xi)^*q(\ve\xi,\ve^2\eta)e(\ve\xi)} \ d[\Im\eta] \ d\xi \  ,   
\ee
where
\be \label{I*2}
\hat{f}_\ve(\xi) \ = \ \sum_{y\in\ve\Z^d} \ve^d f(y) e^{iy\cdot\xi} \ .
\ee
We also have that
\be \label{J*2}
u_{\rm hom}(x,t) \ = \ \frac{1}{(2\pi)^{d+1}}\int_{\R^d} \int_{\R}
\frac{\hat{f}(\xi)e^{-i\xi.x+\eta t}}{\eta+\xi^*q(0,0)\xi} \ d[\Im\eta] \ d\xi ,
\ee
where $\hat{f}(\cdot)$ is the Fourier transform of $f(\cdot)$,
\be \label{K*2}
\hat{f}(\xi) \ = \ \int_{\R^d} f(y) e^{iy\cdot\xi} \ dy \ , \quad \xi\in\R^d.
\ee
Since $f:\R^d\ra\R$ is $C^\infty$ of compact support it follows from (\ref{I*2}), (\ref{K*2}) that 
\be \label{L*2}
\sup_{0<\ve\le 1,\xi\in[-\pi/\ve,\pi/\ve]^d} |\hat{f}_\ve(\xi)|(1+|\xi|^2)^N<\infty, \quad \sup_{0<\ve\le 1,\xi\in[-\pi/\ve,\pi/\ve]^d}|\hat{f}_\ve(\xi)-\hat{f}(\xi)|/\ve^2[1+|\xi|^2]<\infty \ , 
\ee
where $N$ in (\ref{L*2}) can be arbitrarily large. 

We first observe from (\ref{L*2}) and  Lemma 2.9, Lemma 2.10 of \cite{c2} that
\be \label{M*2}
\int_{|\xi|>1/\sqrt{\La\ve}}\left|\int_{-\pi/\ve^2}^{\pi/\ve^2}
\frac{\ve^2\hat{f}_\ve(\xi)e^{-i\xi.x+\eta(t+\ve^2)}}{e^{\ve^2\eta}-1+e(\ve\xi)^*q(\ve\xi,\ve^2\eta)e(\ve\xi)} \ d[\Im\eta] \right| \ d\xi \ \le \  C\ve \ ,
\ee
for a constant $C$ depending only on the function $f(\cdot)$ and $d,\la,\La$.   Since the function $q(\xi,\eta)$ is continuous at $(\xi,\eta)=(0,0)$, we similarly see there exists for any $\del>0$ an $\ve(\del)>0$ depending only on $\del,d,\la,\La,$  such that if $\ve\le\ve(\del)$, then
\be \label{N*2}
\left|\int_{-\pi/\ve^2}^{\pi/\ve^2}
\frac{\ve^2e^{\eta(t+\ve^2)}}{e^{\ve^2\eta}-1+e(\ve\xi)^*q(\ve\xi,\ve^2\eta)e(\ve\xi)}-\frac{\ve^2e^{\eta(t+\ve^2)}}{e^{\ve^2\eta}-1+e(\ve\xi)^*q(0,0)e(\ve\xi)} \ d[\Im\eta] \right|  \ \le \  \del \ 
\ee
for all $\xi\in[-\pi/\ve,\pi/\ve]^d$ such that $|\xi|\le 1/\sqrt{\La\ve}$. It follows from  (\ref{L*2}), (\ref{M*2}), (\ref{N*2}) that  for any $\del>0$ there exists $\ve(\del)>0$ depending only on  $\del,d,\la,\La,$  and the function $f(\cdot)$, such that
\be \label{O*2}
\sup_{x\in\ve\Z^d, t\in\ve^2\Z^+} \left| \langle \ u_\ve(x/\ve,t/\ve^2,\cdot) \ \rangle-\int_{[-\pi/\ve,\pi/\ve]^d }\int_{-\pi/\ve^2}^{\pi/\ve^2}\frac{\ve^2\hat{f}(\xi)e^{-i\xi\cdot x+\eta(t+\ve^2)}}{e^{\ve^2\eta}-1+e(\ve\xi)^*q(0,0)e(\ve\xi)} \ d[\Im\eta] \ d\xi\right| \ \le \ \del 
\ee
provided $\ve\le\ve(\del)$.   If we use the identities
\be  \label{P*2}
\frac{1}{2\pi}\int_{-\pi/\ve^2}^{\pi/\ve^2}\frac{\ve^2e^{\eta(t+\ve^2)}}{e^{\ve^2\eta}-1+e(\ve\xi)^*q(0,0)e(\ve\xi)} \ d[\Im\eta]  \ = \  \left[1-e(\ve\xi)^*q(0,0)e(\ve\xi)\right]^{t/\ve^2},
\ee
\be \label{Q*2}
\frac{1}{2\pi}\int_\R \frac{e^{\eta t}}{\eta+\xi^*q(0,0)\xi} \ d[\Im\eta] \  = \  
\exp\left[-\{\xi^*q(0,0)\xi\} t\right]  \ ,
\ee
we can conclude from (\ref{J*2}),(\ref{O*2}) that for any $\del>0$ there exists $\ve(\del)>0$ depending only on  $\del,d,\la,\La,$  and the function $f(\cdot)$, such that
\be \label{R*2}
\sup_{x\in\ve\Z^d, t\in\ve^2\Z^+} \left| \langle \ u_\ve(x/\ve,t/\ve^2,\cdot) \ \rangle-u_{\rm hom}(x,t)\right| \ \le \ \del  \ 
\ee
provided $\ve\le\ve(\del)$. 
\end{proof}
\begin{remark}
It is easy to see that if  $\tau_{\mathbf{e}_j,0}$ for some $j, \ 1\le j\le d,$ or $\tau_{0,1}$ acts ergodically on $\Om$ then
\be \label{S*2}
\lim_{(\xi,\eta)\ra (0,0)} \left[e^{\Re\eta}-1\right]\|\Phi(\xi,\eta,\cdot)\|^2 \ = \ 0.
\ee
In the case of an elliptic equation with random coefficients,  the limit corresponding to (\ref{S*2})  implies that the solution to the random equation converges in distribution to the solution of the homogenized equation \cite{cs}.  This is not the case for the parabolic problem due to the fact that integrand in (\ref{H*2}), when multiplied by an arbitrary bounded function of $\eta$,  can have a logarithmic divergence upon integration   with respect to $\Im\eta$. 
\end{remark}

 In the continuous time case there is a similar development to the above. The solution $u(x,t,\om)$ to (\ref{C1}), (\ref{D1}) has the representation
 \be \label{T*2}
u(x,t,\om)  =  \frac{1}{(2\pi)^{d+1}}\int_{[-\pi,\pi]^{d}} \int_{-\infty}^\infty
\frac{\hat{h}(\xi)e^{-i\xi.x+\eta t}}{\eta+e(\xi)^*q(\xi,\eta)e(\xi)}\left[1+\Phi(\xi,\eta,\tau_{x,t}\om)e(\xi)\right] \ d[\Im\eta]   \ d\xi,   
\ee
where  now the d dimensional row vector $\Phi(\xi,\eta,\om)$ is the solution to the equation
\be \label{AL2}
\eta\Phi(\xi,\eta,\om)+\pa\Phi(\xi,\eta,\om) +P\pa_\xi^*{\bf a}(\om)\pa_\xi\Phi(\xi,\eta,\om)=-P\pa^*_\xi {\bf a}(\om)  \ . 
\ee
In (\ref{AL2}) the operator $\pa$ is the infinitesimal generator of the time translation group $\tau_{0,t}, \ t\in\R$. 
The $d\times d$ matrix function $q(\xi,\eta)$ in (\ref{T*2}) is given in terms of the solution to (\ref{AL2}) by the formula (\ref{C2}). It follows from (\ref{T*2}) that the Fourier transform $\hat{G}_{{\bf a}}(\xi,\eta)$ of  the averaged Green's function $G_{{\bf a}}(\cdot,\cdot)$ for (\ref{D1}) defined by
\be \label{AJ2}
\hat{G}_{{\bf a}}(\xi,\eta) \ = \  \int_{0}^\infty dt\sum_{x\in\Z^d} G_{{\bf a}}(x,t)\exp[ix.\xi-\eta t] \ ,
\ee
has the representation  
\be \label{AK2}
\hat{G}_{{\bf a}}(\xi,\eta) \ = \  1/[\eta+e(\xi)^*q(\xi,\eta)e(\xi)] \ , \quad \xi\in\R^d, \ \Re\eta>0.
\ee
Let $G(x,t), \ x\in\Z^d, \ t>0,$ be the solution to the initial value problem
\begin{eqnarray} \label{AM2}
\frac{\pa G(x,t)}{\pa t} +\na^*\na G(x,t) \ &=& \ 0, \quad x\in\Z^d, \ t>0, \\
G(x,0) \ &=& \ \del(x), \quad x\in\Z^d \ . \nonumber
\end{eqnarray}
Then the equation (\ref{AL2}) is equivalent to (\ref{J2}) where the operator $T_{\xi,\eta}$ is given by the formula
\be \label{AN2}
T_{\xi,\eta} g(\om) \ = \   \La\int_{0}^\infty e^{-\eta t} \ dt\sum_{x\in \Z^d} \left\{\nabla\nabla^* G_{\La}(x,t)\right\}^*\exp[-ix.\xi]  \ g(\tau_{x,-t}\om) \ ,
\ee 
with $G_\La(x,t)=G(x,\La t), \ x\in\Z^d,t>0$.   Note that in the continuous time case there is no restriction on the value of $\La>0$. The operator $T_{\xi,\eta} $ of (\ref{AN2}) is  bounded on $\mathcal{H}(\Om)$  with $\|T_{\xi,\eta}\|\le 1$, provided $\xi\in\R^d, \ \Re\eta>0$, and hence the Neumann series for the solution of (\ref{J2}) converges in $\mathcal{H}(\Om)$. 

As in the discrete time case it will be useful later to express the operator $T_{\xi,\eta}$ in its Fourier representation. To do this we use the standard notation for the Fourier transform  of a function $h:\Z^d\times\R\ra\C$ which we denote by  $h(x,t), \ x\in\Z^d,t\in\R$. Letting $\hat{h}(\zeta,\theta), \ \zeta\in[-\pi,\pi]^d, \ \theta\in\R,$ be the Fourier transform of  $h(\cdot,\cdot)$, then 
\be \label{AO2}
\hat{h}(\zeta,\theta) \ = \ \int_{-\infty}^\infty  \ dt \sum_{x\in\Z^d} h(x,t)e^{ix.\zeta+it\theta} \ .
\ee
The Fourier inversion formula yields
\be \label{AP2}
h(x,t) \ = \  \frac{1}{(2\pi)^{d+1}}\int_{-\infty}^\infty  \int_{[-\pi,\pi]^{d}} \hat{h}(\zeta,\theta)e^{-ix.\zeta-it\theta} \ d\zeta \ d\theta, \quad x\in\Z^d, t\in\R \ .
\ee
Now the action of the translation group $\tau_{x,0}, \ x\in\Z^d$, on $\Om$ can be described by a set $A_1,...,A_d$ of commuting self-adjoint operators on $L^2(\Om)$, so that
\be \label{AQ2}
f(\tau_{x,0}\cdot) \ = \ \exp[ix.A] f(\cdot), \quad x\in\Z^d, \ f\in L^2(\Om),
\ee
where $A=(A_1,..,A_d)$.  Similarly the action of the translation group $\tau_{0,t}, \ t\in\R$, on $\Om$ can be described by a  self-adjoint operator $B$ on $L^2(\Om)$ which commutes with $A_1,..,A_d$, so that
\be \label{AR2}
f(\tau_{0,t}\cdot) \ = \ \exp[-itB] f(\cdot), \quad t\in\R, \ f\in L^2(\Om) \ ,
\ee
whence the infinitesimal generator $\pa$ in (\ref{AL2}) is given by $\pa=-iB$. 
It follows now from (\ref{AN2}),(\ref{AQ2}), (\ref{AR2}) that
\be \label{AS2}
T_{\xi,\eta} g(\cdot) \ = \  \frac{\La e(\xi-A)e^*(\xi-A)}{\eta-iB+\La e(\xi-A)^*e(\xi-A)}  \ g(\cdot) \ .
\ee

The Neumann series for the solution to (\ref{J2})-with the operator $T_{\xi,\eta}$ given now by (\ref{AS2})-  yields  a convergent perturbation expansion (\ref{W2}), (\ref{X2}) for the function $q(\xi,\eta)$.  It is easy to see that the analogues of Proposition 2.1, Lemma 2.1 and Corollary 2.1 continue to hold  for the continuous time case.   In the continuous time analogue of Corollary 2.2 the inequality (\ref{AD2}) is replaced by   
\be \label{AV2}
|\eta+e(\bar{\xi})^*q(\xi,\eta)e(\xi)| \ \ge \ C[ \ |\eta|+ \La|e(\Re\xi)|^2] \ . 
\ee
The inequality (\ref{AV2}) follows from  Lemma 5.3  of \cite{c2}.
\begin{proof}[Proof of Theorem 1.1-continuous time case] We proceed as in the discrete time case replacing (\ref{D2}) by (\ref{T*2}) and using Lemma 5.4, Lemma 5.5 of \cite{c2} in place of Lemma 2.9, Lemma 2.10 of \cite{c2}. 
\end{proof}

\vspace{.1in}

\section{Rate of Convergence in Homogenization}
In this section we shall prove Theorem 1.2 under the assumption that the solutions $\Phi(\xi,\eta,\om)$ of (\ref{B2}), (\ref{AL2}) satisfy a certain property which we describe below.  In $\S5$ we shall show that this property holds  for the independent variable environment, and in $\S6$ for the massive field theory environment.  We first consider the discrete time case, whence  $\Phi(\xi,\eta,\om)$ is a solution to (\ref{B2}).

For $1\le p\le\infty$ let $L^p(\Z^{d+1},\C^d\otimes\C^d)$ be the Banach space of $d\times d$ matrix valued functions $g:\Z^{d+1}\ra\C^d\otimes\C^d$ with norm $\|g\|_p$ defined by
\be \label{A6}
\|g\|_p^p  =  \sup_{v\in\C^d:|v|=1}\sum_{(x,t)\in\Z^{d+1}} |g(x,t)v|^p \ {\rm if  \ }p<\infty, \qquad \|g\|_\infty= \sup_{v\in\C^d:|v|=1}\left[ \ \sup_{(x,t)\in\Z^{d+1}} |g(x,t)v| \ \right] \ ,
\ee
where $|g(x,t)v|$ is the Euclidean norm of the vector $g(x,t)v\in\C^d$. We assume the following:
\begin{hypothesis}
There exists $p_0(\La/\la)>1$ depending only on $d,\La/\la$  and a constant $C$ such that for $1\le p\le p_0(\La/\la)$, 
\be \label{B6}
\|P\sum_{(x,t)\in\Z^{d+1}} g(x,t)\mathbf{b}(\tau_{x,-t}\cdot)[v+\pa_\xi\Phi(\xi,\eta,\tau_{x,-t}\cdot)v]\| \ \le \  C\|g\|_p|v|
\ee
for all $\xi\in\R^d, \ \eta\in\C$ with $0<\Re\eta<\La$, and $g\in L^p(\Z^{d+1},\C^d\otimes\C^d), \ v\in\C^d$. 
\end{hypothesis}
\begin{remark}
Note from Lemma 2.3 of \cite{c2} that since $\|\pa_\xi\Phi(\xi,\eta,\cdot)v\|^2\le \La|v|^2/\la$ for $\xi\in\R^d, \ \Re\eta>0,$  the inequality (\ref{B6}) holds for $p=1$.  Hence if (\ref{B6}) holds for $p=p_0(\La/\la)$, by the Riesz convexity theorem \cite{sw} it also holds for any $p$ satisfying $1\le p\le p_0(\La/\la)$. 
\end{remark}
We show that  if Hypothesis 3.1 holds then the function $q(\xi,\eta)$ defined by (\ref{C2}) is H\"{o}lder continuous with exponent depending on $d, \La/\la$. 
\begin{lem}
Assume Hypothesis 3.1 holds. Then there exists $\al>0$ depending only on $d,\La/\la$  and a constant $C_\al$ such that the $d\times d $ matrix function $q(\xi,\eta)$ of (\ref{C2}) satisfies the inequality
\be \label{C6}
\|q(\xi',\eta')-q(\xi,\eta)\| \ \le \ C_\al\La[ \ |\xi'-\xi|^\al+|(\eta'-\eta)/\La|^{\al/2} \ ]
\ee
for all $\xi',\xi\in\R^d, \ 0<\Re\eta',\Re\eta< \La$. 
\end{lem}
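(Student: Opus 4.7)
The strategy follows that of \cite{cs} for elliptic equations: Hypothesis 3.1 furnishes a Meyers-type $L^p$ bound with $p>1$ which, interpolated against the trivial $L^1$ bound of Remark 3.1 via the Riesz convexity theorem, upgrades the mere continuity of $q(\xi,\eta)$ established in Proposition 2.1 to H\"{o}lder continuity with exponent $\al>0$ depending only on $d$ and $\La/\la$.

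Fix $v\in\C^d$ and write $w(\xi,\eta,\cdot)=\pa_\xi\Phi(\xi,\eta,\cdot)v\in\mathcal{H}(\Om)$. The fixed-point identity (\ref{J2}) reads $w(\xi,\eta)=PT_{\xi,\eta}[\mathbf{b}(\cdot)(v+w(\xi,\eta))]$; subtracting the analogous identity at $(\xi',\eta')$ and using $\|PT_{\xi',\eta'}\mathbf{b}(\cdot)\|_{\mathcal{H}\to\mathcal{H}}\le 1-\la/\La$ (from Lemma 2.1 together with $\|\mathbf{b}\|_\infty\le 1-\la/\La$), the operator $1-PT_{\xi',\eta'}\mathbf{b}(\cdot)$ is invertible with norm at most $\La/\la$, and so
\[
\|w(\xi',\eta')-w(\xi,\eta)\|\ \le\ \frac{\La}{\la}\bigl\|P[T_{\xi',\eta'}-T_{\xi,\eta}]\bigl[\mathbf{b}(\cdot)(v+w(\xi,\eta))\bigr]\bigr\|.
\]
Since $q(\xi',\eta')-q(\xi,\eta)=\langle\mathbf{a}(\cdot)[\pa_{\xi'}\Phi(\xi',\eta')-\pa_\xi\Phi(\xi,\eta)]\rangle$ by (\ref{C2}), and $\pa_{\xi'}-\pa_\xi$ is (by (\ref{A2})) multiplication by a Lipschitz function in the joint spectrum of $A$ whose contribution is directly controlled, the H\"{o}lder estimate on $q$ reduces to bounding the right-hand side above in terms of $|\xi'-\xi|$ and $|(\eta'-\eta)/\La|^{1/2}$.

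The position-space representation (\ref{G2}) rewrites $T_{\xi',\eta'}-T_{\xi,\eta}$ as the convolution operator on $\Om$ with kernel
\[
K(x,t)\ =\ \La[\na\na^* G_\La(x,t)]^*\bigl(e^{-\eta'(t+1)-ix\cdot\xi'}-e^{-\eta(t+1)-ix\cdot\xi}\bigr).
\]
Applied to $g=\mathbf{b}(\cdot)(v+w(\xi,\eta))$, this is precisely the object controlled by Hypothesis 3.1 at parameter $(\xi,\eta)$, so for every $p\in[1,p_0(\La/\la)]$,
\[
\bigl\|P[T_{\xi',\eta'}-T_{\xi,\eta}]\bigl[\mathbf{b}(\cdot)(v+w(\xi,\eta))\bigr]\bigr\|\ \le\ C\|K\|_p\,|v|.
\]
Writing $\delta:=|\xi'-\xi|+|(\eta'-\eta)/\La|^{1/2}$, I estimate $\|K\|_p$ by splitting the $(x,t)$-sum at scales $|x|\le R$, $\La t\le R^2$: on the inner region the mean-value inequality $|e^{-\eta'(t+1)-ix\cdot\xi'}-e^{-\eta(t+1)-ix\cdot\xi}|\lesssim(t+1)|\eta'-\eta|+|x||\xi'-\xi|$ gives a sharp small factor, while on the complement the full $L^p$-summability of $\na\na^*G_\La$ (valid for every $p>1$ by (\ref{I2})) provides unconditional decay. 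Optimizing $R$ yields $\|K\|_p\le C\La\,\delta^{\beta(p)}$ with some $\beta(p)>0$ for $p>1$, crucially without any negative power of $\Re\eta$; taking $p=p_0(\La/\la)$ then produces (\ref{C6}) with $\al=\al(d,\La/\la)\in(0,1)$.

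The principal obstacle lies in this final step. The naive operator-norm Lipschitz bound on $(\xi,\eta)\mapsto T_{\xi,\eta}$ degrades like $[\Re\eta]^{-1}$, so the H\"{o}lder improvement must come entirely from the improved integrability at $p>1$ afforded by Hypothesis 3.1; balancing the pointwise mean-value bound for the exponential difference against the $L^p$-summability of the parabolic gradient Green's function, and checking that the resulting exponent $\beta(p)$ is positive and uniform in $\Re\eta,\Re\eta'\in(0,\La)$, is the main technical content of the argument.
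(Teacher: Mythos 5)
Your proposal is correct and follows essentially the same route as the paper: reduce $q(\xi',\eta')-q(\xi,\eta)$ via the fixed-point equation (\ref{J2}) (the paper telescopes the Neumann series, you invert $I-PT_{\xi',\eta'}\mathbf{b}$, which is the same estimate (\ref{E6})), identify $T_{\xi',\eta'}-T_{\xi,\eta}$ applied to $\mathbf{b}(v+\pa_\xi\Phi v)$ as the object in Hypothesis 3.1 with kernel (\ref{F6}), and bound the kernel's $L^p$ norm for some $p>1$ with $p\le p_0(\La/\la)$ by trading the mean-value bound on the exponential difference against the decay of $\na\na^*G_\La$, exactly as in (\ref{G6}). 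The only cosmetic differences are your scale-splitting/optimization derivation of the kernel bound, which the paper simply asserts, and the superfluous remark about $\pa_{\xi'}-\pa_\xi$, which is not needed since $[q(\xi',\eta')-q(\xi,\eta)]v=\langle\mathbf{a}[\,\pa_{\xi'}\Phi(\xi',\eta')v-\pa_\xi\Phi(\xi,\eta)v\,]\rangle$ is already controlled by your main display.
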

\begin{proof}
It follows from (\ref{W2}) that
 \be \label{D6}
 h_k(\xi',\eta')-h_k(\xi,\eta) \ = \  \sum_{j=1}^{k} \langle \ \mathbf{b}(\cdot)  \  [PT_{\xi',\eta'}\mathbf{b}(\cdot)]^{j-1} \ P[T_{\xi',\eta'}-T_{\xi,\eta}]\mathbf{b}(\cdot) \  [PT_{\xi,\eta}\mathbf{b}(\cdot)]^{k-j} \ \rangle \ .
 \ee
 Hence we conclude from  (\ref{J2}), (\ref{X2}) and (\ref{D6})  upon using the inequality $\|T_{\xi',\eta'}\|\le 1$ that  for $v\in\C^d$,
\be \label{E6}
\|[q(\xi',\eta')-q(\xi,\eta)]v\| \ \le \  (\La^2/\la)\| P[T_{\xi',\eta'}-T_{\xi,\eta}]\mathbf{b}(\cdot) \  [v+\pa_\xi\Phi(\xi,\eta,\cdot)v]\| \ .
\ee
From (\ref{G2}) we see that the RHS of (\ref{E6}) is the same as the LHS of (\ref{B6}) with the function $g(\cdot,\cdot)$ given by the formula
 \be \label{F6}
 g(x,t) \ = \ \La[\na\na^*G_\La(x,t-1)]^*[ \ e^{-\eta' t-ix\cdot\xi'}-e^{-\eta t-ix\cdot\xi} \ ] \ .
 \ee
 It is easy to see that for $0<\al\le 1,$ the function $g(\cdot,\cdot)$ is in $L^p(\Z^{d+1},\C^d\otimes\C^d)$ with $p>(d+2)/(d+2-\al),$ and with $\|g(\cdot,\cdot)\|_p$ satisfying the inequality
 \be \label{G6}
 \|g(\cdot,\cdot)\|_p \ \le \ C_p\La^{1-1/p}[ \ |\xi'-\xi|^\al+|(\eta'-\eta)/\La|^{\al/2} \ ] \  ,
 \ee
 where the constant $C_p$ depends only on $d,p$. The H\"{o}lder continuity (\ref{C6}) for sufficiently small $\al>0$ follows from  (\ref{E6}) and (\ref{G6}).
 \end{proof}
 \begin{proof}[Proof of Theorem 1.2-discrete time case] We follow the proof of Theorem 1.1 using the H\"{o}lder continuity of the function $q(\cdot,\cdot)$.
 \end{proof}
 
 For the continuous time case we prove Theorem 1.2  assuming a hypothesis analogous to Hypothesis 3.1. For $1\le p\le\infty$ let $L^p(\Z^d\times\R,\C^d\otimes\C^d)$ be the Banach space of $d\times d$ matrix valued functions $g:\Z^d\times\R\ra\C^d\otimes\C^d$ with norm $\|g\|_p$ defined by
\be \label{H6}
\|g\|_p^p  =  \sup_{v\in\C^d:|v|=1}\sum_{x\in\Z^d}\int_{-\infty}^\infty dt \  |g(x,t)v|^p \ {\rm if  \ }p<\infty, \qquad \|g\|_\infty= \sup_{v\in\C^d:|v|=1}\left[ \ \sup_{(x,t)\in\Z^d\times\R} |g(x,t)v| \ \right] \ ,
\ee
where $|g(x,t)v|$ is the Euclidean norm of the vector $g(x,t)v\in\C^d$. 
 \begin{hypothesis}
 There exists $p_0(\La/\la)>1$ depending only on $d,\La/\la$  and a constant $C$ such that for $1\le p\le p_0(\La/\la)$, 
\be \label{I6}
\|P\sum_{x\in\Z^d}\int_{-\infty}^\infty dt \  g(x,t)\mathbf{b}(\tau_{x,-t}\cdot)[v+\pa_\xi\Phi(\xi,\eta,\tau_{x,-t}\cdot)v]\| \ \le \  C\|g\|_p|v|
\ee
for all $\xi\in\R^d, \ \eta\in\C$ with $0<\Re\eta<\La$, and $g\in L^p(\Z^d\times\R,\C^d\otimes\C^d), \ v\in\C^d$. 
\end{hypothesis}
It is easy to see that Hypothesis 3.2 implies the H\"{o}lder continuity of the matrix function $q(\cdot,\cdot)$ defined by (\ref{C2}), (\ref{AL2}).  We conclude that Theorem 1.2 holds for the continuous time case also. 
 
\vspace{.2in}

\section{Fluctuations of averaged Green's functions}
In this section we shall prove Theorem 1.3 under the assumption that the solutions $\Phi(\xi,\eta,\om)$ of (\ref{B2}), (\ref{AL2}) satisfy stronger versions of Hypothesis 3.1 and 3.2 of $\S3$. Thus in the discrete time case our hypothesis is: 
\begin{hypothesis}
Let $T_{\xi,\eta}$ be the operator (\ref{G2}) on the Hilbert space $\mathcal{H}(\Om)$ and  $T^*_{\xi,\eta}$denote its adjoint.  Then for $ k\ge 1, \ p_2=p_3=\cdots=p_k=1,$  and $S_{\xi,\eta}=T_{\xi,\eta}$ or $S_{\xi,\eta}=T^*_{\xi,\eta}$, there exists $p_0(\La/\la)>1$ depending only on $d,\La/\la$  and a constant $C(k)$ such that
\begin{multline}  \label{A*3}
\left\|\sum_{(x_1,t_1),...(x_k,t_k)\in\Z^{d+1}} \left\{\prod_{j=1}^k g_j(x_j,t_j)\tau_{x_j,-t_j}P\mathbf{b}(\cdot)[I-PS_{\xi,\eta}\mathbf{b}(\cdot)]^{-1}\right\}v\right\| \\
 \le \  C(k)\prod_{j=1}^k \|g_j\|_{p_j}|v| \quad  {\rm for \ }g_j\in L^{p_j}(\Z^{d+1},\C^d\otimes\C^d), \ j=1,..,k, \ v\in\C^d,
\end{multline}
provided $1\le p_1\le p_0(\La/\la)$ and  $\xi\in\C^d, \ \eta\in\C$ satisfy  $0<\Re\eta<\La,  \ |\Im\xi|\le C_1\sqrt{\Re\eta/\La}$, with $C_1$ depending only on $d,\La/\la$.  
\end{hypothesis}
\begin{remark}
Note that from (\ref{J2}) and Lemma 2.1 we see that the inequality (\ref{A*3}) holds for $p_1=1$. Hence if (\ref{A*3}) holds for $p_1=p_0(\La/\la)$, by the Riesz convexity theorem \cite{sw} it also holds for any $p_1$ satisfying $1\le p_1\le p_0(\La/\la)$. 
\end{remark}
We  define spaces  $L^p([-\pi,\pi]^{d+1}\times\Om,\C^d\otimes\C^d)$ of $d\times d$ matrix valued functions $g:[-\pi,\pi]^{d+1}\times\Om\ra \C^d\otimes\C^d$ with norm $\|g\|_p$ defined by
\begin{multline} \label{B*3}
\|g\|_p^p=  \sup_{v\in\C^d:|v|=1}\frac{1}{(2\pi)^{d+1}} \int_{[-\pi,\pi]^{d+1}} \langle \ |g(\xi,\Im\eta,\cdot)v|^2 \ \rangle ^{p/2}  \ d[\Im\eta] \ d\xi\quad {\rm if \ }p<\infty, \\
\quad \|g\|_\infty= \sup_{v\in\C^d:|v|=1}\left[ \ \sup_{(\xi,\Im\eta)\in[-\pi,\pi]^{d+1}, \ }  \langle \ |g(\xi,\Im\eta,\cdot)v|^2 \ \rangle ^{1/2} \ \right]  \ .
\end{multline}
We consider $\xi\in\C^d,\eta\in\C$ with $\xi$ having fixed imaginary part,  $\eta$ having fixed positive real part, and satisfying the conditions of Hypothesis 4.1. 
For $k\ge 1$ we define a multilinear operator $T_{k,\Im\xi,\Re\eta}$  from a sequence $[g_1,g_2,..,g_k]$ of $k$ functions $g_j:\Z^{d+1}\ra\C^d\otimes \C^d, \ j=1,..,k$, to periodic functions $T_{k,\Im\xi,\Re\eta}[g_1,g_2,..g_k]:[-\pi,\pi]^{d+1}\times\Om\ra \C^d\otimes\C^d$ by
\begin{multline}  \label{C*3}
T_{k,\Im\xi,\Re\eta}[g_1,g_2,...,g_k](\Re\xi,\Im\eta,\cdot) \ = \\
   \sum_{(x_1,t_1),..(x_k,t_k) \in\Z^{d+1}} \prod_{j=1}^k g_j(x_j,t_j)e^{-i(x_j.\Re\xi+t\Im\eta)}\tau_{x_j,-t_j}P\mathbf{b}(\cdot)[I-PT_{\xi,\eta}\mathbf{b}(\cdot)]^{-1} \ .
\end{multline}
We similarly  define multilinear operators $\tilde{T}_{k,\Im\xi,\Re\eta}$  by replacing $T_{\xi,\eta}$ in (\ref{C*3}) with $T^*_{\xi,\eta}$. 
For $p$ satisfying $1\le p\le \infty$  let $p'$ be the conjugate of $p$, so $1/p+1/p'=1$. In \cite{c2} the following  result was obtained:
\begin{lem}
Suppose $2\le q\le\infty$ and $p_1,...,p_k$ with $1\le p_1,...,p_k\le 2$ satisfy the identity 
\be \label{D*3}
\frac{1}{p_1'} +\frac{1}{p_2'}+\cdots +\frac{1}{p_k'} \ = \ \frac{1}{q} \ ,
\ee
and for $j=1,..,k$, the functions $g_j\in L^{p_j}(\Z^{d+1},\C^d\otimes\C^d)$. Then there exist positive constants $C_1,C_2$ depending only on $d,\La/\la$ such that if $0<\Re\eta<\La, \ |\Im\xi|<C_1\sqrt{\Re\eta/\La},$  the function $S_{k,\Im\xi,\Re\eta}[g_1,g_2,..g_k]=T_{k,\Im\xi,\Re\eta}[g_1,g_2,..g_k]$ or $S_{k,\Im\xi,\Re\eta}[g_1,g_2,..g_k]=\tilde{T}_{k,\Im\xi,\Re\eta}[g_1,g_2,..g_k]$  is in $L^q([-\pi,\pi]^{d+1}\times\Om,\C^d\otimes\C^d )$ and
\be \label{E*3}
\| \ S_{k,\Im\xi,\Re\eta}[g_1,g_2,..g_k] \ \|_q \  \le 
  C_2^k\prod_{j=1}^k \|g_j\|_{p_j} \ .
\ee
\end{lem}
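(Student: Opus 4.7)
The plan is to reduce the multilinear bound to two families of natural endpoints and then fill in the interior by multilinear complex interpolation, in the spirit of recovering Hausdorff--Young from Plancherel combined with the trivial $L^1\to L^\infty$ Fourier bound.

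First I would establish the $L^\infty_{(\Re\xi,\Im\eta)}$ endpoint directly from Hypothesis~4.1. Absorbing the phase factor $\prod_j e^{-i(x_j\cdot\Re\xi+t_j\Im\eta)}$ into $g_j$ does not change $\|g_j\|_{p_j}$, so Hypothesis~4.1 yields
\[
\sup_{(\Re\xi,\Im\eta)\in[-\pi,\pi]^{d+1}}\|T_{k,\Im\xi,\Re\eta}[g_1,\ldots,g_k](\Re\xi,\Im\eta,\cdot)v\|_{L^2(\Om)} \ \le \ C\prod_{j=1}^k\|g_j\|_{p_j}|v|,
\]
whenever $p_2=\cdots=p_k=1$ and $p_1\in[1,p_0(\La/\la)]$, together with the $k$ analogous bounds obtained by permuting the role of the ``special'' index. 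These give the $q=\infty$ endpoints of (\ref{E*3}).

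The second family comes from Plancherel in $(X,T)\leftrightarrow(\Re\xi,\Im\eta)$ combined with Young's convolution inequality. I would rewrite (\ref{C*3}) in the Fourier form
\[
T_{k,\Im\xi,\Re\eta}[g_1,\ldots,g_k](\Re\xi,\Im\eta,\cdot) \ = \ \sum_{(X,T)\in\Z^{d+1}}K(X,T)\,e^{-i(X\cdot\Re\xi+T\Im\eta)},
\]
where
\[
K(X,T) \ = \ \sum_{\sum_j x_j=X,\ \sum_j t_j=T}\prod_{j=1}^k g_j(x_j,t_j)\,\tau_{x_j,-t_j}P\mathbf{b}(\cdot)[I-PT_{\xi,\eta}\mathbf{b}(\cdot)]^{-1}.
\]
Parseval then gives $\int_{[-\pi,\pi]^{d+1}}\|T_k v\|_{L^2(\Om)}^2\,d(\Re\xi,\Im\eta)=(2\pi)^{d+1}\sum_{(X,T)}\|K(X,T)v\|_{L^2(\Om)}^2$. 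Because each $\tau_{x_j,-t_j}$ is unitary on $L^2(\Om)$ and $\|P\mathbf{b}[I-PT_{\xi,\eta}\mathbf{b}]^{-1}\|\le C$ in the stated parameter domain by Lemma~2.1, one has the pointwise bound $\|K(X,T)v\|_{L^2(\Om)}\le C^k|v|(|g_1|*\cdots*|g_k|)(X,T)$, and Young's convolution inequality produces the $q=2$ endpoint along the entire hyperplane $\sum_j 1/p_j'=1/2$ with $p_j\in[1,2]$.

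With both families of endpoints, the full set $\{(p_1,\ldots,p_k;q):\ p_j\in[1,2],\ q\ge 2,\ \sum_j 1/p_j'=1/q\}$ is filled in by Stein's multilinear complex interpolation: embed the tuple $(g_1,\ldots,g_k)$ in an analytic family indexed by $z\in\C$ in a strip, arrange each $g_j^z$ to hit one of the above endpoints on each vertical boundary, and apply Phragm\'{e}n--Lindel\"{o}f to the resulting operator-valued function. Multilinearity of $T_k$ and the affine form of the constraint $\sum 1/p_j'=1/q$ guarantee that convex combinations of boundary data produce exactly the interior estimate, with constant $C_2^k$ arising from the $k$-fold application of the bounded-norm estimate for $P\mathbf{b}[I-PT_{\xi,\eta}\mathbf{b}]^{-1}$. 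The statement for $\tilde T_{k,\Im\xi,\Re\eta}$ follows verbatim after replacing $T_{\xi,\eta}$ by its adjoint, which enjoys the same norm bounds.

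The main obstacle I anticipate is controlling the non-commutativity of the translations $\tau_{x_j,-t_j}$ and the resolvent factors $P\mathbf{b}[I-PT_{\xi,\eta}\mathbf{b}]^{-1}$ in the pointwise kernel bound for $K(X,T)$. Since the translations are unitary and the resolvent's operator norm is uniformly controlled by Lemma~2.1 and Corollary~2.1 in the region $0<\Re\eta<\La$, $|\Im\xi|<C_1\sqrt{\Re\eta/\La}$, one may pass factor-by-factor to operator-norm bounds without losing the convolution structure in $(X,T)$, so the argument goes through.
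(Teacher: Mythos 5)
Your argument has a genuine gap at the $q=2$ endpoint, which is the heart of the lemma. The Parseval step treats (\ref{C*3}) as a Fourier series in $(\Re\xi,\Im\eta)$ with coefficients $K(X,T)$, but the factors $P\mathbf{b}(\cdot)[I-PT_{\xi,\eta}\mathbf{b}(\cdot)]^{-1}$ that you put inside $K(X,T)$ themselves depend on $\xi=\Re\xi+i\Im\xi$ and $\eta=\Re\eta+i\Im\eta$ through the operator $T_{\xi,\eta}$ (see (\ref{G2}) and its spectral form (\ref{AI2}), where the running variables enter as $e(\xi-A)$ and $e^{\eta-iB}$). So the "coefficients" are not independent of the dual variables, and the identity $\int\|T_k v\|^2\,d(\Re\xi,\Im\eta)=(2\pi)^{d+1}\sum_{(X,T)}\|K(X,T)v\|^2$ does not hold; Plancherel plus Young therefore does not deliver the $q=2$ hyperplane. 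The difficulty you flag at the end is exactly this one, but the remedy you offer — passing factor-by-factor to operator-norm bounds — destroys the oscillatory structure you need: once the resolvent factors are replaced by their norms, the phases must be estimated in absolute value and you only recover the trivial all-$p_j=1$ bound. Note that the present paper does not prove this lemma at all; it quotes it from \cite{c2}, and any self-contained proof has to confront precisely the $(\Re\xi,\Im\eta)$-dependence of $[I-PT_{\xi,\eta}\mathbf{b}(\cdot)]^{-1}$ (in \cite{c2} this is done by working with the representation (\ref{AI2}) in terms of the commuting generators $A,B$, not by a naive Parseval identity). As it stands, your interpolation scaffold has no $q=2$ endpoint to stand on.

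A secondary problem is the appeal to Hypothesis 4.1 for the $q=\infty$ endpoints. Lemma 4.1 is unconditional — it is the input to Lemma 4.2, where Hypothesis 4.1 is added to widen the exponent range — and its constant $C_2^k$ must depend only on $d,\La/\la$, whereas (\ref{A*3}) only provides some constant $C(k)$; moreover the "$k$ analogous bounds obtained by permuting the special index" are not part of Hypothesis 4.1, and are not automatic since the factors in (\ref{C*3}) do not commute. Fortunately none of this is needed: under the identity (\ref{D*3}) the case $q=\infty$ forces $p_1=\cdots=p_k=1$, and that endpoint follows, as in Remark 4.1, from the triangle inequality, unitarity of the $\tau_{x,-t}$ on $L^2(\Om)$, and the uniform bound on $P\mathbf{b}(\cdot)[I-PT_{\xi,\eta}\mathbf{b}(\cdot)]^{-1}$ coming from Lemma 2.1, the Neumann series and $\|\mathbf{b}(\cdot)\|\le 1-\la/\La$. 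So the $q=\infty$ side is salvageable without Hypothesis 4.1, but the $q=2$ side requires a different mechanism than the one you describe, and with it missing the lemma is not established.
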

 If we assume Hypothesis 4.1 we can improve Lemma  4.1 as  follows:
 \begin{lem}
 Suppose  Hypothesis 4.1 holds with  $p_0(\La/\la)\le 2$, and  $q, p_1,...,p_k$ with $2\le q\le\infty, \ 1\le p_1,...,p_k\le 2$ satisfy the inequality 
 \be \label{F*3}
\frac{1}{q} \ \le \  \frac{1}{p_1'} +\frac{1}{p_2'}+\cdots +\frac{1}{p_k'} \ \le \ \frac{1}{q} + \left[1-\frac{1}{p_0(\La/\la)}\right]\left[1-\frac{2}{q}\right] \ .
 \ee
 Then there exists a positive constant $C_1$ depending only on $d,\La/\la$ such that if $0<\Re\eta<\La, \ |\Im\xi|<C_1\sqrt{\Re\eta/\La},$  the function $S_{k,\Im\xi,\Re\eta}[g_1,g_2,..g_k]=T_{k,\Im\xi,\Re\eta}[g_1,g_2,..g_k]$ or $S_{k,\Im\xi,\Re\eta}[g_1,g_2,..g_k]=\tilde{T}_{k,\Im\xi,\Re\eta}[g_1,g_2,..g_k]$    is in $L^q([-\pi,\pi]^{d+1}\times\Om,\C^d\otimes\C^d )$ and
\be \label{G*3}
\| \ S_{k,\Im\xi,\Re\eta}[g_1,g_2,..g_k] \ \|_q \  \le 
  C(k)\prod_{j=1}^k \|g_j\|_{p_j} \ ,
\ee
for some constant $C(k)$. 
 \end{lem}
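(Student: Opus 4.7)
The plan is to prove Lemma 4.2 by multilinear Riesz--Thorin interpolation between two endpoint families: Lemma 4.1 itself, giving the ``diagonal'' bounds on tuples satisfying $\sum 1/p_j^{(0)'} = 1/q_0$ for each $q_0 \in [2, \infty]$ and $p_j^{(0)} \in [1, 2]$; and a new $L^\infty$-in-$(\Re\xi, \Im\eta)$ endpoint extracted from Hypothesis 4.1, at $q_1 = \infty$ with the special tuple $(p_1^{(1)}, 1, \ldots, 1)$ for $p_1^{(1)} \in [1, p_0(\La/\la)]$. The interpolation hull of these two families is precisely the region (F*3).

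To extract the $L^\infty$ endpoint, observe that at fixed $(\Re\xi, \Im\eta)$ the phases $e^{-i(x_j \cdot \Re\xi + t_j \Im\eta)}$ appearing in (C*3) may be absorbed into $g_j$ without affecting the $L^{p_j}(\Z^{d+1})$ norm. Hypothesis 4.1 then yields, uniformly in $(\Re\xi, \Im\eta)$ and unit $v \in \C^d$,
\be
\Av{|T_{k,\Im\xi,\Re\eta}[g_1,\ldots,g_k](\Re\xi,\Im\eta,\cdot)v|^2}^{1/2} \ \le \ C(k)\|g_1\|_{p_1^{(1)}}\prod_{j\ge 2}\|g_j\|_1 |v|,
\ee
which, upon taking the supremum over $(\Re\xi, \Im\eta) \in [-\pi,\pi]^{d+1}$ and $|v|=1$, gives the desired bound in the $L^\infty$ norm (B*3). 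The $\tilde T_{k,\Im\xi,\Re\eta}$ case is identical but uses $S_{\xi,\eta}=T^*_{\xi,\eta}$ in Hypothesis 4.1.

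Next I apply multilinear Riesz--Thorin with interpolation parameter $\theta = 1 - q_0/q \in [0, 1-2/q]$ between the Lemma 4.1 endpoint at some $q_0 \in [2,q]$ and the $L^\infty$ endpoint above. A direct computation using $p_j^{(1)} = 1$ for $j \ge 2$ yields the identity
\be
\sum_{j=1}^k \frac{1}{p_j'(\theta)} \ = \ \frac{1-\theta}{q_0} + \frac{\theta}{p_1^{(1)'}} \ = \ \frac{1}{q} + \theta\Big(1 - \frac{1}{p_1^{(1)}}\Big).
\ee
Thus for a target tuple with excess $\delta := \sum 1/p_j' - 1/q$, one reads off $p_1^{(1)} \in [1, p_0]$ from $\theta(1 - 1/p_1^{(1)}) = \delta$, then chooses $q_0 \in [2, q]$ so that the Lemma 4.1 input tuple $(p_j^{(0)})$, determined by $1/p_j(\theta) = (1-\theta)/p_j^{(0)} + \theta/p_j^{(1)}$, lies in $[1,2]^k$. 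The Riesz--Thorin inequality then produces (G*3) with constant $C(k) \le (C_{\rm Lem.\,4.1})^{1-\theta}(C_{\rm Hyp.\,4.1})^{\theta}$.

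The main obstacle is precisely this feasibility check at the interpolation step: for every target in (F*3) one must find admissible parameters $(\theta, p_1^{(1)}, q_0, p_j^{(0)})$ in their respective ranges. The upper bound $\delta \le (1-1/p_0)(1-2/q)$ in (F*3) is exactly the threshold that keeps $p_1^{(1)}$ inside $[1, p_0]$ as $\theta$ approaches its maximum $1 - 2/q$, while the freedom to vary $q_0$ throughout $[2, q]$ (rather than fixing $q_0 = 2$) is what accommodates $p_j$ saturating the upper bound $2$ for indices $j \ge 2$; the standing assumption $p_0(\La/\la) \le 2$ guarantees that the $p_1$-endpoints $p_1^{(0)} \in [1, 2]$ and $p_1^{(1)} \in [1, p_0]$ are mutually compatible so that the interpolated value $p_1$ covers $[1, 2]$ without mismatch.
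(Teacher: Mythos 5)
Your extraction of the $L^\infty$ endpoint from Hypothesis 4.1 (absorbing the phases into the $g_j$) is correct and is the same first move as the paper's, and your base case, in which $p_2=\cdots=p_k=1$, coincides with the paper's first interpolation. The gap is the claim that the interpolation hull of your two families --- the Lemma 4.1 ``diagonal'' family and the Hypothesis 4.1 endpoint $(q=\infty;\,p_1^{(1)},1,\ldots,1)$ --- is all of (\ref{F*3}). Riesz--Thorin between two estimates yields only the segment joining the two exponent tuples, so your scheme covers the join of the two families, and this join misses interior points of (\ref{F*3}). Indeed, for $j\ge 2$ the $L^\infty$ endpoint pins the $j$-th exponent at $1$, so the interpolated exponent satisfies $1/p_j\ge (1-\theta)/2+\theta=(1+\theta)/2$, i.e. $\theta\le 2/p_j-1$; on the other hand the excess $\delta=\sum_j 1/p_j'-1/q$ of the interpolant equals $\theta/p_1^{(1)\prime}\le\theta\,(1-1/p_0)$, forcing $\theta\ge\delta/(1-1/p_0)$. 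Take $k=2$, $p_0=2$, $q=4$, $p_1=1$, $p_2=1.9$: then (\ref{F*3}) holds strictly ($1/4\le 9/19\le 1/2$), but the two requirements read $\theta\le 1/19$ and $\theta\ge 2\delta\approx 0.45$, so no admissible parameters exist. Varying $q_0$ does not rescue this, since $q_0$ is tied to $\theta$ by $1/q=(1-\theta)/q_0$ and cannot unpin the $L^1$ slots at the $q=\infty$ endpoint; even the full convex hull of your two families omits, for example, $q=\infty$, $p_1=p_2=5/4$, which lies in (\ref{F*3}) when $p_0=2$. So the final ``feasibility check'' paragraph of your proposal asserts exactly what fails, and the assertion that varying $q_0$ accommodates $p_j$ near $2$ for $j\ge2$ is incorrect.

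The paper avoids this by not returning to the single-slot Hypothesis 4.1 endpoint at every stage: it runs an induction on the number $r$ of indices allowed to have $p_j>1$, using the already-proved case of the lemma with $r$ nontrivial exponents, evaluated at $q=\infty$ (where (\ref{F*3}) reads $\sum_{j\le r}1/p_j'\le 1-1/p_0$), as the new $L^\infty$ endpoint, and then interpolating in the single function $g_{r+1}$ against the Lemma 4.1 bound at $q=2$ with the remaining functions held fixed. It is this bootstrapping --- upgrading the $q=\infty$ endpoint at each step so that several exponents may exceed $1$ simultaneously --- that your one-shot multilinear interpolation lacks. To repair your argument you would need either such an induction or an independent proof of a multi-slot analogue of Hypothesis 4.1 (an $L^\infty$ bound with several $p_j>1$ and $\sum_j 1/p_j'\le 1-1/p_0$); neither follows from interpolation between the two families you use.
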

 \begin{proof}
 We assume first that $p_2=p_3=\cdots=p_k=1$, in which case  Hypothesis 4.1 and Lemma 4.1 imply respectively that (\ref{G*3}) holds for $1/p'_1\le 1-1/p_0(\La/\la), \ q=\infty,$ and for $p_1=q=2$. The Riesz convexity theorem then implies that (\ref{G*3}) holds if $p'_1,q$ satisfy (\ref{F*3}) with $p_2=p_3=\cdots=p_k=1$. Next assume for induction that we have proved (\ref{G*3}) in the case when (\ref{F*3}) holds with  $p_{r+1}=p_{r+2}=\cdots=p_k=1$ for some $r\ge 1$.  Hence (\ref{G*3}) holds for $1/p'_1+\cdots1/p'_r\le 1-1/p_0(\La/\la),  \ p_{r+1}=1, \ q=\infty$, where  
 the functions $g_{r+1},..,g_k$ are fixed with $p_{r+2}=p_{r+3}=\cdots=p_k=1$. From Lemma 4.1 we  see that  (\ref{G*3}) also holds for $1/p'_1+\cdots1/p'_{r+1}=1/2, \ q=2,$ with the same functions  $g_{r+1},..,g_k$. Now we fix the functions $g_1,..,g_r,g_{r+2},..,g_k$ with $p_{r+1}=p_{r+2}=\cdots=p_k=1$  and $1/p'_1+\cdots1/p'_r\le 1-1/p_0(\La/\la)\le 1/2$.  Applying the Riesz convexity theorem to the functions $g_{r+1}$, we conclude that  (\ref{G*3}) holds  if $p_1,..,p_{r+1}$ satisfies (\ref{F*3}) with $p_{r+2}=p_{r+3}=\cdots=p_k=1$. 
 \end{proof}
 For $1\le p<\infty$ let  $L^p_w([-\pi,\pi]^{d+1})$ be the space of functions $g:[-\pi,\pi]^{d+1}\ra\C$ which are weakly $p$ integrable. The norm $\|g\|_{p,w}$ of $g$ is defined to be the minimum number satisfying the inequality
 \be \label{H*3}
 (2\pi)^{-(d+1)}{\rm meas}\{(\xi,\Im\eta)\in[-\pi,\pi]^{d+1} \ : |g(\xi,\Im\eta)|>z \  \} \ \le \ \|g\|_{p,w}^p/z^p \quad {\rm for \ all \  } z>0.
 \ee
\begin{proposition}
Assume Hypothesis 4.1 holds, $4d\La\le 1$ and $m$ is a  positive integer.  Then there exist positive constants $C_1$ and  $\al\le1$ depending only on $d$ and $\La/\la$,  such that
\begin{multline} \label{A3}
\|q(\xi',\eta')-q(\xi,\eta)\|\le C\La \left[ \ |\xi'-\xi|^\alpha +|(\eta'-\eta)/\La|^{\al/2} \ \right]  \ , \\
{\rm for \ } 0<\Re\eta, \Re\eta'\le \La, \quad \xi',\xi\in\C^d \ {\rm with \ } |\Im\xi|+|\Im\xi'|\le C_1\sqrt{\Re\eta/\La} \ ,
\end{multline}
where $C$ is a constant. 

If  $\xi\in\C^d, \ \eta\in\C$ with fixed $\Im\xi\in\R^d, \ \Re\eta>0$ satisfying  $0<\Re\eta<\La, \ |\Im\xi|<C_1\sqrt{\Re\eta/\La}$, and $m<1+d/2$, the function 
\be \label{AB2}
(\Re\xi,\Im\eta)\ra\frac{\pa^m q_{r,r'}(\xi,\eta)}{\pa\eta^m}, \quad  (\Re\xi,\Im\eta)\in[-\pi,\pi]^{d+1} \ ,
\ee
is in the space $L^p_w([-\pi,\pi]^{d+1})$ with $p=(1+d/2)/(m-\al/2)$ and its norm is bounded by $C\La^{1-m+1/p} $  for some constant $C$.

If $m$ is the largest integer strictly less than $1+d/2$ and $0\le\del<1+d/2-m,$ then for any $\rho\in\R$ satisfying $|\rho|\le 1$, the function 
\be \label{AC2}
(\Re\xi,\Im\eta)\ra\frac{\pa^m}{\pa\eta^m}\left[ \ q_{r,r'}(\xi,\eta+i\rho)-q_{r,r'}(\xi,\eta) \ \right]/|\rho|^{\del}
\ee
is in the space $L^p_w([-\pi,\pi]^{d+1})$ with $p=(1+d/2)/(m+\del-\al/2)$ and its norm is bounded by $ C_p\La^{1- m-\del+1/p}$, where the constant $C_p$ can diverge as $p\ra 1$.
\end{proposition}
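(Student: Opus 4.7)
The proposition is a three-part strengthening of Lemma 3.1, extending it into the complex strip of Lemma 2.1 and promoting it to higher $\eta$-derivatives. I would base all three parts on the resolvent representation $q(\xi,\eta)-\av{\mathbf{a}(\cdot)}=-\La\av{\mathbf{b}(\cdot)PT_{\xi,\eta}\mathbf{b}(\cdot)[I-PT_{\xi,\eta}\mathbf{b}(\cdot)]^{-1}}$, which follows from summing (\ref{X2}) using (\ref{J2}), together with the multilinear bounds of Lemma 4.2 that hold under Hypothesis 4.1.

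For Part 1 the proof of Lemma 3.1 transplants almost verbatim. Telescoping and resumming the Neumann series gives
\[
q(\xi',\eta')-q(\xi,\eta)=-\La\AV{\mathbf{b}(\cdot)[I-PT_{\xi',\eta'}\mathbf{b}(\cdot)]^{-1}P[T_{\xi',\eta'}-T_{\xi,\eta}]\mathbf{b}(\cdot)[I-PT_{\xi,\eta}\mathbf{b}(\cdot)]^{-1}};
\]
bounding the outer $[I-PT_{\xi',\eta'}\mathbf{b}(\cdot)]^{-1}$ in operator norm via Lemma 2.1 and applying the $k=1$ case of Hypothesis 4.1 on the inside reduces matters to estimating the $L^p$ norm of the kernel $g$ in (\ref{F6}). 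For $|\Im\xi|+|\Im\xi'|\le C_1\sqrt{\Re\eta/\La}$ the analytically continued kernels of Lemma 2.1 still satisfy the Gaussian bound up to a bounded multiplicative constant, so the estimate (\ref{G6}) remains valid with the same exponent $\al$ and yields (\ref{A3}).

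For Part 2 the Leibniz rule applied to the resolvent form writes $\pa^m q/\pa\eta^m$ as a finite linear combination, indexed by compositions $j_1+\cdots+j_k=m$ with $j_i\ge1$, of terms
\[
\AV{\mathbf{b}(\cdot)R_{\xi,\eta}\prod_{i=1}^{k}\bigl[P(\pa^{j_i}_\eta T_{\xi,\eta})\mathbf{b}(\cdot)R_{\xi,\eta}\bigr]},\qquad R_{\xi,\eta}:=[I-PT_{\xi,\eta}\mathbf{b}(\cdot)]^{-1},
\]
where $\pa^{j}_\eta T_{\xi,\eta}$ has kernel $g^{(j)}(x,t)=\La(-1)^j(t+1)^j[\na\na^*G_\La(x,t)]^*e^{-\eta(t+1)-ix\cdot\xi}$. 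Viewed as a function of $(\Re\xi,\Im\eta)$, each term is the pairing of $T_{k,\Im\xi,\Re\eta}[g^{(j_1)},\ldots,g^{(j_k)}]$ against $\mathbf{b}(\cdot)$, so Lemma 4.2 controls its $L^q([-\pi,\pi]^{d+1})$ norm by $C\prod_i\|g^{(j_i)}\|_{p_i}$. A direct computation from the Gaussian bound on $\na\na^*G_\La$ gives $\|g^{(j)}\|_p\le C_p\La^{1-1/p-j}$ whenever $p>(1+d/2)/(1+d/2-j)$, with $C_p$ blowing up at the endpoint. Optimizing the exponents $(q,p_1,\ldots,p_k)$ on the boundary of (\ref{F*3}) places the scaling $\La^{1-m+1/p}$ at the critical $p=(1+d/2)/(m-\al/2)$, and a Marcinkiewicz interpolation between nearby strong $L^q$ estimates extracts the asserted weak $L^p_w$ bound.

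For Part 3 I would combine Parts 1 and 2: in the kernel of Part 2 one factor $(t+1)^{j_i}$ is replaced by $(t+1)^{j_i}|e^{-i\rho(t+1)}-1|/|\rho|^\del\le C(t+1)^{j_i+\del}$, so the argument of Part 2 applies with $m$ replaced by $m+\del$ in the summability thresholds. The restriction $\del<1+d/2-m$ is exactly what keeps the limiting exponents $p_i$ strictly above $1$, and yields the weak $L^p_w$ bound with $p=(1+d/2)/(m+\del-\al/2)$ and norm $\le C_p\La^{1-m-\del+1/p}$. The main obstacle is the endpoint Marcinkiewicz interpolation in Parts 2 and 3: Lemma 4.2 only delivers strong $L^q$ bounds away from the critical exponent, while the target $p$ sits precisely where one of the factors $g^{(j_i)}$ just fails to belong to the limiting $L^{p_i}$. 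Distributing the $(t+1)^{j_i}$ weight across several slots of the multilinear operator so each remains strictly subcritical, and tracking the $\La$-dependence of the blowing-up constants through the interpolation so that the endpoint scaling $\La^{1-m+1/p}$ is preserved, is the technical heart of the argument.
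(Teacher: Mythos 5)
Your Part 1 is fine and is essentially what the paper does: (\ref{A3}) is obtained exactly as in Lemma 3.1, with the kernel (\ref{F6}) estimated in $L^p$ after the analytic continuation of Lemma 2.1, and only the $k=1$ case of Hypothesis 4.1 is needed.

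Parts 2 and 3, however, have a genuine gap. Your plan is to feed the kernels $g^{(j)}(x,t)\sim\La(t+1)^{j}\na\na^*G_\La(x,t)$ directly into the multilinear bound of Lemma 4.2 and then pair against $\mathbf{b}(\cdot)$. Two things go wrong. First, $g^{(j)}\in L^{p}(\Z^{d+1})$ only for $p>(1+d/2)/(1+d/2-j)$, and Lemma 4.2 (and Hypothesis 4.1) only accept exponents $p_i\le 2$; so already for $m=1$ in $d\le 2$, and for the top-order term $j_1=m$ of your Leibniz expansion in every relevant dimension, the kernel lies in no admissible $L^{p_i}$, and no redistribution of the weight $(t+1)^{j_i}$ among the slots of $T_{k,\Im\xi,\Re\eta}$ can fix this, because those slots multiply translated resolvent factors while the differentiated multiplier $\pa_\eta^{j}T_{\xi,\eta}$ sits between resolvents. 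Second, even where the kernels are admissible, Lemma 4.2 only produces exponents $q\ge 2$ on the torus, and pairing with $\mathbf{b}(\cdot)$ does not lower the exponent; but the target weak exponent $p=(1+d/2)/(m-\al/2)$ is below $2$ for $m=1,d=1$ and for all $m\ge 2$, so no interpolation among such strong $L^{q}$, $q\ge2$, bounds can reach it. The missing idea, which is how the paper proceeds (see (\ref{AI4})--(\ref{AL4})), is to factor the differentiated multiplier itself: write its Fourier symbol as $\hat g^{\,*}\hat h$ with each factor of single-gradient type, $h\sim\sqrt{\La}\,\na G_\La$, which lies in the \emph{weak} space $L^{p_1}_w(\Z^{d+1})$ with $p_1=(d+2)/(d+1)\le 2$; then express $\pa_\eta^m q_{r,r'}$ as an $\mathcal{H}(\Om)$ inner product of $\tilde T_{k,\Im\xi,\Re\eta}$ and $T_{k,\Im\xi,\Re\eta}$ outputs, upgrade Lemma 4.2 to weak-type inputs and outputs by Hunt's interpolation theorem, and use that the pointwise product of two functions in $L^q_w([-\pi,\pi]^{d+1})$ lies in $L^{q/2}_w$ with $q/2>1+d/2$ (this is also exactly where $p_0(\La/\la)>1$ buys $\al>0$). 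The fractional difference (\ref{AC2}) is then handled by the same factorization applied to the difference quotient, not by inserting an extra factor $(t+1)^{\del}$ into a kernel that is then out of range.
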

\begin{proof}
The H\"{o}lder continuity (\ref{A3}) of the function $q(\cdot,\cdot)$ has already been proven in Lemma 3.1. We first  prove that the derivative (\ref{AB2}) with $m=1$ is in  $L^p_w([-\pi,\pi]^{d+1})$ with $p=(1+d/2)/(1-\al/2)$ for some $\al>0$ depending only on $d,\La/\la$.  
Observe from  (\ref{W2}) and  (\ref{X2})   that 
\be \label{M*3}
\left( \frac{\pa}{\pa \eta} \right)q(\xi,\eta)=  -\La\langle \ \mathbf{b}(\cdot)[I-PT_{\xi,\eta}\mathbf{b}(\cdot)]^{-1}\left\{\frac{\pa}{\pa\eta} T_{\xi,\eta}\right\} P\mathbf{b}(\cdot)[I-PT_{\xi,\eta}\mathbf{b}(\cdot)]^{-1}  \ \rangle \ .
\ee
Denoting by $[\cdot,\cdot]$ the inner product for $\mathcal{H}(\Om)$,  we therefore have for $v_1,v_2\in\C^d$  that
\be \label{AI4}
\left( \frac{\pa}{\pa \eta} \right)v^*_1q(\xi,\eta)v_2=  -\La\left[ \  \tilde{T}_{1,\Im\xi,\Re\eta} \ g(\Re\xi,\Im\eta,\cdot)v_1, \  T_{1,\Im\xi,\Re\eta} \ h(\Re\xi,\Im\eta,\cdot)v_2 \ \right] \ 
\ee
for  certain $d\times d$ matrix valued functions $g(x,t), h(x,t), \ x\in\Z^d,t\in\Z$. The functions $g(\cdot,\cdot), h(\cdot,\cdot)$ are determined from their Fourier transforms (\ref{AE2})   by the  formula
\be \label{AJ4}
\hat{g}(\zeta,\theta)^*\hat{h}(\zeta,\theta)  \ = \  - \frac{\La e(i\Im\xi-\zeta)e(-i\Im\xi-\zeta)^*}{\left[e^{\Re\eta-i\theta}-1+\La e(-i\Im\xi-\zeta)^*e(i\Im\xi-\zeta)\right]^2} \ ,
\ee
which follows from (\ref{AI2}).
We take  $\hat{h}(\cdot,\cdot)$ to be given by the formula 
\be \label{AK4}
\hat{h}(\zeta,\theta) \ = \ \left(\frac{\La}{d}\right)^{1/2}\frac{\mathbf{1}_d \  e(-i\Im\xi-\zeta)^*}{\left[e^{\Re\eta-i\theta}-1+\La e(-i\Im\xi-\zeta)^*e(i\Im\xi-\zeta)\right]} \ ,
\ee
where $\mathbf{1}_d$ is the $d$ dimensional column vector with all entries equal to $1$.  From (\ref{H2}) and (\ref{AK4}) it follows that
\be \label{AL4}
h(x,t) \ = \ \left(\frac{\La}{d}\right)^{1/2}\mathbf{1}_d \ \{ \na G_\La(x,t-1)\}^* e^{x\cdot\Im\xi-t\Re\eta} \  \ {\rm if \ } t\ge 1, \quad h(x,t)=0 \ {\rm otherwise}.
\ee 
Assuming $0<\Re\eta<\La, \ |\Im\xi|\le C\sqrt{\Re\eta/\La},$ for sufficiently small positive constant $C$ depending only on $d$, it follows from (\ref{AL4}) that there is a constant $C_d$ depending only on $d$ such that $|h(x,t)|$ is bounded above by $\{\La/(\La t+1\}^{1/2}$   times the RHS of (\ref{I2}).  It follows that $h(\cdot,\cdot)$ is in $L^p_w(\Z^{d+1})$ with $p=(d+2)/(d+1)$  and $\|h\|_{p,w}\le C\La^{1/2-1/p}$ for a constant $C$ depending only on $d$. 

Observe now that by the Hunt interpolation theorem \cite{rs} the inequality (\ref{G*3}) also holds for the operator $T_{1,\Im\xi,\Re\eta}$ as a mapping from $L^{p_1}_w(\Z^{d+1})$ to $L_w^q([-\pi,\pi]^{d+1}\times\Om,\C^d\otimes\C^d)$. Hence  $T_{1,\Im\xi,\Re\eta}h$ is in $L_w^q([-\pi,\pi]^{d+1}\times\Om,\C^d\otimes\C^d)$ provided $q$ satisfies the inequality in (\ref{F*3}) with $p_1=(d+2)/(d+1)$. Evidently we can choose $q$ so that $q/2>1+d/2$.  Since we can make an exactly similar argument for the function $g(x,t)$ and  $\tilde{T}_{1,\Im\xi,\Re\eta}g$, we conclude from (\ref{AI4}) that $\pa q_{r,r'}(\xi,\eta)/\pa\eta$ is in the space $L^{q/2}_w([-\pi,\pi]^{d+1})$  with norm bounded by $\La^{2-2/p}$ times a constant. We have proved for $m=1$ that the derivative (\ref{AB2}) is in the appropriate weak $L^p$ space.   We proceed similarly to estimate the higher derivatives (\ref{AB2}) and the fractional derivative (\ref{AC2}). 
\end{proof}
\begin{remark}
Proposition 4.1 with $\al=0$ was proven in \cite{c2}. In that case the constant $C$  in the statement of the Proposition depends only on $d,\La/\la$. 
\end{remark}
Proposition 4.1 enables us to compare the averaged Green's function $G_\mathbf{a}(x,t), \ x\in\Z^d,t\in\Z^+$ for (\ref{B1}), (\ref{C1})  to the lattice Green's function $G^{\rm lattice}_{\mathbf{a}_{\rm hom}}(x,t), \ x\in\Z^d, \ t\in\Z^+$ defined by
\be \label{I*3}
G^{\rm lattice}_{\mathbf{a}_{\rm hom}}(x,t) \ = \ \frac{1}{(2\pi)^{d+1}}\int_{[-\pi,\pi]^{d+1}} 
\frac{e^{-i\xi.x+\eta(t+1)}}{e^\eta-1+e(\xi)^*q(0,0)e(\xi)} \ d[\Im\eta] \ d\xi \ .
\ee
\begin{theorem}
Assume Hypothesis 4.1 holds and $4d\La\le 1$.  Then there exist positive constants $\al,\ga,$ with  $\al\le 1$, depending only on $d,\La/\la$ and a constant $C$ such that for $x\in\Z^d, \ t\in\Z,t\ge0,$ 
\be \label{C3}
|G_{{\bf a}}(x,t)-G^{\rm lattice}_{{\bf a}_{\rm hom}}(x,t)|  \le  \frac{C}{[\La t+1]^{(d+\alpha)/2}} \exp\left[-\ga\min\left\{|x|, \ \frac{ |x|^2}{\La t+1}\right\}\right] \ ,
 \ee
\be \label{D3}
 |\na G_{{\bf a}}(x,t)-\na G^{\rm lattice}_{{\bf a}_{\rm hom}}(x,t)|  \le \frac{C}{[\La t+1]^{(d+1+\alpha)/2}} \exp\left[-\ga\min\left\{|x|, \ \frac{ |x|^2}{\La t+1}\right\}\right] \ .
 \ee
If $\del$ satisfies $0<\del\le 1$ then there exists $\al,\ga>0$ depending only on $d,\La/\la,\del$  and a constant $C_\del$ such that the following inequality holds:
\begin{multline} \label{E3}
\big| \ [\na G_{{\bf a}}(x',t)-\na G^{\rm lattice}_{{\bf a}_{\rm hom}}(x',t)]-[\na G_{{\bf a}}(x,t)-\na G^{\rm lattice}_{{\bf a}_{\rm hom}}(x,t)] \ \big|  \\
\le \ |x'-x|^{1-\del} \frac{C_\del}{[\La t+1]^{(d+2-\del+\alpha)/2}} \exp\left[-\ga\min\left\{|x|, \ \frac{ |x|^2}{\La t+1}\right\}\right] \ 
, \quad   \ x',x\in\Z^d, \ 
1/2\le (|x'|+1)/(|x|+1)\le 2. 
\end{multline}
The constant $\al$ in (\ref{E3}) must satisfy  $\al<\del$. 
\end{theorem}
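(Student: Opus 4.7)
The plan is to compare the two Green's functions via the Fourier representations (\ref{G*2}) and (\ref{I*3}) and exploit the H\"older regularity of $q(\xi,\eta)$ from Proposition 4.1. Subtracting the two representations, the difference takes the form
\[
G_{\mathbf{a}}(x,t) - G^{\mathrm{lattice}}_{\mathbf{a}_{\mathrm{hom}}}(x,t) = \frac{1}{(2\pi)^{d+1}}\int_{[-\pi,\pi]^{d+1}} \frac{e^{-i\xi\cdot x + \eta(t+1)}\, e(\xi)^*[q(0,0)-q(\xi,\eta)]e(\xi)}{D(\xi,\eta)\,D_0(\xi,\eta)}\, d[\Im\eta]\, d\xi,
\]
where $D(\xi,\eta) = e^\eta - 1 + e(\xi)^* q(\xi,\eta) e(\xi)$, $D_0(\xi,\eta) = e^\eta - 1 + e(\xi)^* q(0,0) e(\xi)$, and the contour has $\Re\eta>0$. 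To produce exponential decay in $x$, I shift the $\xi$-contour into the complex domain by $\xi\mapsto\xi+i\mu$ with
\[
\mu = -c\,\frac{x}{|x|}\min\Bigl\{1,\ \frac{|x|}{\La t+1}\Bigr\},
\]
and simultaneously choose $\Re\eta = [\La(t+1)]^{-1}$, so that $|\mu| \le C_1\sqrt{\Re\eta/\La}$ and we stay inside the analyticity region of Lemma 2.1 and Corollary 2.1. The shift produces the prefactor $e^{\mu\cdot x} = \exp[-c\min\{|x|,|x|^2/[\La t+1]\}]$ matching the exponential factor in (\ref{C3})--(\ref{E3}).

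On this shifted contour Proposition 4.1 yields $\|q(\xi+i\mu,\eta)-q(0,0)\| \le C\La[|\Re\xi|^\al+|\mu|^\al+|\eta/\La|^{\al/2}]$, while Corollary 2.2 gives $|D(\xi+i\mu,\eta)|,|D_0(\xi+i\mu,\eta)|\ge C[|\eta|+\La|e(\Re\xi)|^2]$. Consequently the modulus of the integrand is bounded by
\[
C\La\, e^{\Re\eta(t+1)}\, e^{\mu\cdot x}\,\frac{|\Re\xi|^2\bigl[|\Re\xi|^\al+|\mu|^\al+|\eta/\La|^{\al/2}\bigr]}{[|\eta|+\La|\Re\xi|^2]^2}.
\]
Carrying out the $\Im\eta$- and $\Re\xi$-integrations by the scaling arguments that produce the lattice heat-kernel bound (\ref{I2}), each of the three H\"older terms contributes the same Gaussian profile improved by the extra factor $[\La t+1]^{-\al/2}$, and (\ref{C3}) follows. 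Estimate (\ref{D3}) for $\na G_{\mathbf{a}}-\na G^{\mathrm{lattice}}_{\mathbf{a}_{\mathrm{hom}}}$ is obtained by inserting an additional factor $\overline{e_j(\xi+i\mu)}$ in the integrand (Fourier representation of $\na_j$), which supplies one more power of $|\Re\xi|$ and improves the decay by $[\La t+1]^{-1/2}$.

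For the H\"older-type estimate (\ref{E3}) I replace $e^{-i\xi\cdot x}$ in the integral for $\na G$ by $e^{-i\xi\cdot x'}-e^{-i\xi\cdot x}$ and apply the interpolation
\[
|e^{-i(\xi+i\mu)\cdot x'}-e^{-i(\xi+i\mu)\cdot x}| \ \le \ C|\Re\xi|^{1-\del}|x'-x|^{1-\del},
\]
valid because the hypothesis $(|x'|+1)/(|x|+1)\in[1/2,2]$ keeps $|\mu||x'-x|$ bounded. This contributes $|x'-x|^{1-\del}$ and an additional $|\Re\xi|^{1-\del}$, yielding the time decay $[\La t+1]^{-(d+2-\del+\al)/2}$. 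The constraint $\al<\del$ arises from requiring the total $|\Re\xi|$-power in the numerator, $2+\al+(1-\del)$, to stay strictly below the threshold at which the $\Re\xi$-integral diverges near the origin relative to $[|\eta|+\La|\Re\xi|^2]^{-2}$.

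The principal delicate point is the complex contour shift itself: verifying that (\ref{A3}) and the denominator lower bound from Corollary 2.2 remain simultaneously valid for $|\Im\xi|\sim\sqrt{\Re\eta/\La}\sim[\La t+1]^{-1/2}$ requires matching the constants $C_1$ across Lemma 2.1, Corollary 2.1, Corollary 2.2, and Proposition 4.1. A related subtlety is the transition between the parabolic regime $|x|\le\La t+1$, where $|\mu|\sim|x|/[\La t+1]$, and the far-field regime $|x|\ge\La t+1$, where $|\mu|$ saturates at a constant and the analytic continuation of Lemma 2.1 must be invoked at a different scale of $\Re\eta$; the two regimes are handled separately and combined at the end.
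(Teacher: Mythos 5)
Your setup — shifting the contour to get the exponential factor, writing the difference of Green's functions as an integral whose numerator carries $q(0,0)-q(\xi,\eta)$, bounding that numerator by the H\"older estimate of Proposition 4.1 and the denominators from below by Corollary 2.2 — reproduces the paper's starting point (its formulas (\ref{F3})--(\ref{M3})), and the choice of shift vector is essentially the paper's (\ref{H3}). But the step where you claim that ``carrying out the $\Im\eta$- and $\Re\xi$-integrations by the scaling arguments that produce the lattice heat-kernel bound (\ref{I2})'' yields the decay $[\La t+1]^{-(d+\al)/2}$ is a genuine gap, and it is precisely where the real work of the theorem lies. After the contour shift the only remaining $t$-dependence in the integrand is the oscillatory factor $e^{i\Im\eta(t+1)}$ of modulus one; if you take absolute values, the pointwise bound (\ref{M3}) integrates over the region where $|\Im\eta|$ or $|e(\xi)|$ is of order one to a $t$-independent constant of order $1/\La$, so no power of $[\La t+1]$ is gained there at all. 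The constant-coefficient kernel bound (\ref{I2}) is not obtained by size estimates either: it relies on evaluating or integrating by parts the $\eta$-integral, which uses smoothness (indeed analyticity) of the integrand in $\eta$ — exactly what is not available for $q(\xi,\eta)$, whose regularity in $\eta$ is only the H\"older bound (\ref{A3}) plus weak-$L^p$ control of at most $m<1+d/2$ derivatives (and a fractional increment) from Proposition 4.1.

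The paper's proof exploits the oscillation explicitly: it splits the torus into dyadic regions $E_{r,k}$ in $|e(\xi)|^2$ and $|\Im\eta|$, uses the half-period translation $\Im\eta\mapsto\Im\eta+\pi/(t+1)$ together with the H\"older continuity of $q$ to gain decay on the sets $E_{0,k}$, and on the sets $E_{r,k}$ with $r\ge1$ integrates by parts $m$ times in $\Im\eta$ (identity (\ref{AA3})), controlling $\pa^m f_a/\pa[\Im\eta]^m$ through the weak-$L^p$ bounds (\ref{AB2}), with the fractional-difference estimate (\ref{AC2}) needed when $d$ is even because no integer $m$ lies in $(d/2,1+d/2)$. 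None of these ingredients appears in your proposal, and without them the $[\La t+1]^{-(d+\al)/2}$, $[\La t+1]^{-(d+1+\al)/2}$ and $[\La t+1]^{-(d+2-\del+\al)/2}$ decays in (\ref{C3})--(\ref{E3}) cannot be extracted. A secondary point: your contour parameters ($\Re\eta=[\La(t+1)]^{-1}$ while $|\Im\xi|$ saturates at a constant in the far field $|x|\ge\La t+1$) violate the analyticity constraint $|\Im\xi|<C_1\sqrt{\Re\eta/\La}$ for large $t$; one should couple $\Re\eta$ to the shift as in (\ref{H3}), i.e.\ $\Re\eta=\La|a|^2$, so that the constraint holds in both regimes — you flag this but do not resolve it.
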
 
\begin{proof} 
From (\ref{G*2}), (\ref{I*3}) and Corollary 2.1  there is a constant $C$ depending only on $\La/\la$ such that for $a\in\R^d$ with $|a|\le 1,$ 
\be \label{F3}
G_{{\bf a}}(x,t)-G^{\rm lattice}_{{\bf a}_{\rm hom}}(x,t) \ = \ \frac{\exp[a.x/C+\La|a|^2(t+1)]}{(2\pi)^{d+1}}\int_{[-\pi,\pi]^{d+1}} 
e^{-i\xi.x+i\Im\eta(t+1)} f_a(\xi,\Im\eta) \ d\xi \ d[\Im\eta] \ , 
\ee
where the function $f_a(\xi,\Im\eta)$ is given by the formula 
\begin{multline} \label{G3}
f_a(\xi,\Im\eta) \ =   \ \frac{e(\xi-ia/C)^*\{q(0,0)-q(\xi+ia/C,\La|a|^2+i\Im\eta)\}e(\xi+ia/C)}
 {\left[\exp[\La|a|^2+i\Im\eta]-1+e(\xi-ia/C)^*q(0,0)e(\xi+ia/C)\right] } \\
\times \ \frac{1}{\left[\exp[\La|a|^2+i\Im\eta]-1+e(\xi-ia/C)^*q(\xi+ia/C,\La|a|^2+i\Im\eta)e(\xi+ia/C)\right]}  \ .
\end{multline}
The exponential decay in the inequalities (\ref{C3})-(\ref{E3}) is obtained by choosing $a$ in (\ref{F3}) to be given by
\be \label{H3}
a=- x/(C+1)(\La t+1) \ {\rm if \ } |x|\le \La t+1, \quad a= -x/(C+1)|x| \ {\rm if \ } |x|\ge \La t+1 \ .
\ee
It follows from (\ref{AD2}), Proposition 4.1 and Corollary 2.1  that there is a positive constant $C_1$  such that  the function in (\ref{G3}) is bounded by 
\be \label{M3}
|f_a(\xi,\Im\eta)| \ \le \ \frac{C_1[|e(\xi)|^2+|a|^2]}{\La[|\Im\eta|/\La+|e(\xi)|^2+|a|^2]^{2-\al/2}} \ , \quad {\rm for \ }  (\xi,\Im\eta)\in[-\pi,\pi]^{d+1}.
\ee

To complete the proof of the theorem we need to obtain the polynomial decay in $[\La t+1]$ in (\ref{C3})-(\ref{E3}), whence we may assume that $\La t\ge 1$. We divide the torus $[-\pi,\pi]^{d+1}$ into various regions, the first of which is
\be \label{N3}
E_{0,0} \ = \ \{ (\xi,\Im\eta)\in[-\pi,\pi]^{d+1} \ : \  \La t|e(\xi)|^2\le 1, \ |\Im\eta|\le 1/t  \ \} \ .
\ee
It follows then from (\ref{M3})  that there is a constant $C_2$ such that
\be \label{O3}
\int_{E_{0,0}} |f_a(\xi,\Im\eta)|  \ d\xi  \ d[\Im\eta] \ \le \  C_2/[\La t+1]^{(d+\al)/2} \ .
\ee

Next we consider for $k=1,2,..,$ regions
\be \label{P3}
E_{0,k} \ = \ \{ (\xi,\Im\eta)\in[-\pi,\pi]^{d+1} \ : \  \La t|e(\xi)|^2\le 1, \ \ 2^{k-1}/t< |\Im\eta|\le 2^k/t \ \} \ .
\ee
From (\ref{M3}) we see that if $|a|\le 2/\La t$  there is a constant $C_3$ such that
\be \label{Q3}
\left| \ \int_{E_{0,k}} 
e^{-i\xi.x+i\Im\eta(t+1)} f_a(\xi,\Im\eta) \ d\xi \ d[\Im\eta]  \ \right| \  \le \  C_32^{-k(1-\al/2)}/[\La t+1]^{(d+\al)/2} \ .
\ee
In general $a=O(1)$, so we need to take advantage of the oscillatory nature of the integral in (\ref{Q3}). Let $\rho=\pi/(t+1)$ so that $e^{i\rho(t+1)}=-1$, and $E^\rho_{0,k}=\{ \ (\xi,\Im\eta):(\xi,\Im\eta+\rho)\in E_{0,k} \ \}$.  Then the LHS of (\ref{Q3}) is bounded by
\begin{multline} \label{R3}
\frac{1}{2}\int_{E_{0,k}\cap E^\rho_{0,k}} |f_a(\xi,\Im\eta)-f_a(\xi,\Im\eta+\rho)| \ d\xi \ d[\Im\eta]+\\
\frac{1}{2}\int_{E_{0,k}-E^\rho_{0,k}} |f_a(\xi,\Im\eta)| \ d\xi \ d[\Im\eta]+
\frac{1}{2}\int_{E^\rho_{0,k}-E_{0,k}} |f_a(\xi,\Im\eta+\rho)| \ d\xi \ d[\Im\eta] \ .
\end{multline}
It follows again from (\ref{M3}) that the last two integrals on the RHS of (\ref{R3}) are bounded by the RHS of (\ref{Q3}). In order to bound the first integral we observe from the H\"{o}lder continuity (\ref{A3}) of the function $q(\cdot,\cdot)$ that there are constants $C_4,C_5$ and
\begin{multline} \label{S3}
 |f_a(\xi,\Im\eta)-f_a(\xi,\Im\eta+\rho)|  \ \le \ \frac{C_4[|e(\xi)|^2+|a|^2](\rho/\La)^{\al/2}}{\La[|\Im\eta|/\La+|e(\xi)|^2+|a|^2]^2} \\
 + \frac{C_5[|e(\xi)|^2+|a|^2](\rho/\La)}{\La[|\Im\eta|/\La+|e(\xi)|^2+|a|^2]^{3-\al/2}}, \quad {\rm for \ }  (\xi,\Im\eta)\in E_{0,k}\cap E^\rho_{0.k} \ .
\end{multline}
Since we are assuming $|a|\ge 2/\La t$ it follows from (\ref{S3}) that
\be \label{T3}
\sum_{k\ge 1}\int_{E_{0,k}\cap E^\rho_{0,k}} |f_a(\xi,\Im\eta)-f_a(\xi,\Im\eta+\rho)| \ d\xi \ d[\Im\eta] \ \le \ 
C_6/[\La t+1]^{(d+\al)/2} \ 
\ee
for some constant  $C_6$. We  therefore conclude from (\ref{O3})-(\ref{T3})  that there is a constant $C_7$ and
\be \label{U3}
\sum_{k\ge 0} \left| \ \int_{E_{0,k}} 
e^{-i\xi.x+i\Im\eta(t+1)} f_a(\xi,\Im\eta) \ d\xi \ d[\Im\eta]  \ \right| \  \le \  C_7/[\La t+1]^{(d+\al)/2} \ .
\ee

The inequality (\ref{U3}) can also be derived by using the fact from Theorem 3.1 that the derivative $\pa q(\xi+ia/C,\La|a|^2+i\Im\eta)/\pa [\Im\eta]$ is in the space $L^p_w([-\pi,\pi]^{d+1})$  with $p=(1+d/2)/(1-\al/2)$. Thus we observe that
\begin{multline} \label{V3}
\left| \ \int_{E_{0,k}} 
e^{-i\xi.x+i\Im\eta(t+1)} f_a(\xi,\Im\eta) \ d\xi \ d[\Im\eta] \ \right|  \ \le \  \frac{1}{t+1} \  \int_{ \pa E_{0,k}} | f_a(\xi,\Im\eta)| \ d\xi   \\
+\frac{1}{t+1}  \int_{E_{0,k}} 
\left| \ \frac{\pa f_a(\xi,\Im\eta)}{\pa[\Im\eta]} \ \right| \ d\xi \ d[\Im\eta]   \ ,
\end{multline}
where $\pa E_{0,k}$ is the union of sets $\{(\xi,\Im\eta): \La t|e(\xi)|^2\le 1, \ \Im\eta={\rm constant}\}$ with the constant given by $\pm 2^k/t$ or $\pm2^{k-1}/t$. It follows from (\ref{M3}) that the first integral on the RHS of (\ref{V3}) is bounded by the RHS of (\ref{Q3}). To bound the second integral we use the inequality
\begin{multline} \label{W3}
\left| \ \frac{\pa f_a(\xi,\Im\eta)}{\pa[\Im\eta]} \ \right|  \ \le \ 
 \frac{C_8[|e(\xi)|^2+|a|^2]}{\La^2[|\Im\eta|/\La+|e(\xi)|^2+|a|^2]^{3-\al/2}} \\
+ \frac{C_9[|e(\xi)|^2+|a|^2]}{\La^2[|\Im\eta|/\La+|e(\xi)|^2+|a|^2]^{2}}\left|\frac{\pa q(\xi+ia/C,\La|a|^2+i\Im\eta)}{\pa[\Im\eta]}\right| \ ,
\end{multline}
where $C_8,C_9$ are constants.  We can bound the integral of the first term on the RHS of (\ref{W3}) just as we did with the second term on the RHS of (\ref{S3}). To bound the integral of the second term we use the well known fact that if $f\in L^p_w([-\pi,\pi]^{d+1})$ with $1<p<\infty$, then for any measurable set $F$, one has
\be \label{X3}
\int_F |f| \ \le \ C_p\|f\|_{p,w} m(F)^{1-1/p} \ ,
\ee
where the constant $C_p$ depends only on $p$.  Taking $p=(1+d/2)/(1-\al/2)$ we conclude from Proposition 4.1 that $1/(t+1)$ times the integral over $E_{0,k}$ of the second term on the RHS of (\ref{W3}) is bounded by 
\be \label{Y3}
\frac{C_{10}[1/\La t+|a|^2]}{\La t[2^k/\La t+|a|^2]^{2}} \frac{2^{k(1-1/p)}}{[\La t+1]^{(d+\al)/2}}
\ee
for some constant $C_{10}$. Summing (\ref{Y3}) over $k\ge 1$ we obtain the inequality (\ref{U3}) again. 

For $r\ge 1, \ k\ge 0,$ let $E_{r,k}$ be defined by
\be \label{Z3} 
E_{r,k} =  \{ (\xi,\Im\eta)\in[-\pi,\pi]^{d+1} \ : \  2^{r-1}<\La t|e(\xi)|^2\le 2^r, \ \  2^{k-1}/t<|\Im\eta|\le 2^k/t \ \} \ , \ k\ge1,
\ee
\be \nonumber
 E_{r,0} = \{ (\xi,\Im\eta)\in[-\pi,\pi]^{d+1} \ : \  2^{r-1}<\La t|e(\xi)|^2\le 2^r, \ \  |\Im\eta|\le 1/t \ \}  \ .
\ee
Then we have that
\begin{multline}  \label{AA3}
\sum_{k=0}^\infty \int_{E_{r,k}} 
e^{-i\xi.x+i\Im\eta(t+1)} f_a(\xi,\Im\eta) \ d\xi \ d[\Im\eta]   \ = \\
\frac{i^m}{(t+1)^m}\sum_{k=0}^\infty \int_{E_{r,k}} 
e^{-i\xi.x+i\Im\eta(t+1)} \frac{\pa^m f_a(\xi,\Im\eta)}{\pa[\Im\eta]^m} \ d\xi \ d[\Im\eta]    \ .
\end{multline}
Just as in (\ref{W3}) we see from Proposition 4.1  that
\be \label{AB3}
\frac{\pa^m f_a(\xi,\Im\eta)}{\pa[\Im\eta]^m} \ = \ \frac{|e(\xi)|^2+|a|^2}{\La^2[|\Im\eta|/\La+|e(\xi)|^2+|a|^2]^{2}} \ g_{a,m}(\xi,\Im\eta) \ ,
\ee
where for $m<1+d/2$ the function $g_{a,m}(\cdot,\cdot)$ is in  $L^p_w([-\pi,\pi]^{d+1})$ with $p=(1+d/2)/(m-\al/2)$. Thus there is a constant $C_{11}$ such that
\be \label{AC3}
\int_F |g_{a,m}(\xi,\Im\eta)| \ d\xi \ d[{\Im\eta}]  \ \le \ C_{11} \La^{1-m+1/p} m(F)^{1-1/p}\ , \quad F\subset [-\pi,\pi]^{d+1} \ .
\ee
It follows from (\ref{AB3}), (\ref{AC3}) that
\be \label{AD3}
\frac{1}{(t+1)^m}\int_{E_{r,k}} \left| \  \frac{\pa^m f_a(\xi,\Im\eta)}{\pa[\Im\eta]^m} \ \right| \ d\xi \ d[\Im\eta]    \ \le \ 
\frac{C_{12}}{[\La t+1]^{(d+\al)/2}} \frac{2^{(rd/2+k)(1-1/p)}}{2^r+2^k} \ 
\ee
for some constant  $C_{12}$.  Observe that
\be \label{AE3}
\sum_{k=0}^\infty\sum_{r=1}^\infty \frac{2^{(rd/2+k)(1-1/p)}}{2^r+2^k} \ < \ \infty \ 
\ee
provided $m$ satisfies the inequality $m>(d+\al)/2$. If $d$ is odd then there is an integer $m$ satisfying $d/2<m<1+d/2$, whence (\ref{U3}), (\ref{AA3}), and (\ref{AD3}) imply that (\ref{C3}) holds for some $\al>0$. 

In the case when $d$ is even we note from (\ref{AA3}) that
\begin{multline}  \label{AF3}
\sum_{k=0}^\infty \int_{E_{r,k}} 
e^{-i\xi.x+i\Im\eta(t+1)} f_a(\xi,\Im\eta) \ d\xi \ d[\Im\eta]   \ = \\
\frac{i^m}{2(t+1)^m}\sum_{k=0}^\infty \int_{E_{r,k}} 
e^{-i\xi.x+i\Im\eta(t+1)} \left[ \ \frac{\pa^m f_a(\xi,\Im\eta)}{\pa[\Im\eta]^m} - \frac{\pa^m f_a(\xi,\Im\eta+\rho)}{\pa[\Im\eta]^m}  \ \right]\ d\xi \ d[\Im\eta]    \ ,
\end{multline}
where $m$ is the largest integer satisfying $m<1+d/2$ and $\rho=\pi/(t+1)$.  Similarly to (\ref{AB3}) we have that
\be \label{AG3}
\frac{1}{|\rho|^{\del}}\left[\frac{\pa^m f_a(\xi,\Im\eta)}{\pa[\Im\eta]^m}-\frac{\pa^m f_a(\xi,\Im\eta+\rho)}{\pa[\Im\eta]^m}\right] \ = \ \frac{|e(\xi)|^2+|a|^2}{\La^2[|\Im\eta|/\La+|e(\xi)|^2+|a|^2]^{2}} \ g_{a,\del}(\xi,\Im\eta) \ ,
\ee
where for $0\le\del<1+d/2-m$ the function $g_{a,\del}(\cdot,\cdot)$ satisfies an inequality (\ref{X3}) with $p=(1+d/2)/(m+\del-\al/2)$. Hence as in (\ref{AD3}) we conclude that
\be \label{AH3}
\frac{1}{(t+1)^m}\int_{E_{r,k}} \left| \  \frac{\pa^m f_a(\xi,\Im\eta)}{\pa[\Im\eta]^m}-\frac{\pa^m f_a(\xi,\Im\eta+\rho)}{\pa[\Im\eta]^m} \ \right| \ d\xi \ d[\Im\eta]    \ \le \ 
\frac{C_{13}}{[\La t+1]^{(d+\al)/2}} \frac{2^{(rd/2+k)(1-1/p)}}{2^r+2^k} \ ,
\ee
where $C_{13}$ also depends on $\del$ as well as $\La,d,\La/\la$.  Now (\ref{C3}) for some $\al>0$  follows from (\ref{U3}), (\ref{AF3}), and (\ref{AH3}) by choosing $\del$ in (\ref{AH3}) so that $0<\del<1$. 

In order to prove (\ref{D3}) we follow the previous argument, replacing the function $f_a(\xi,\Im\eta)$ by the function $e(\xi)f_a(\xi,\Im\eta)$.  To prove (\ref{E3}) we use the inequality
\be \label{AI3}
|e^{i\xi\cdot(x-x')}-1|  \le \ 10|x-x'|^{1-\del}|e(\xi)|^{1-\del}  \ ,
 \ee
 and replace the function  $f_a(\xi,\Im\eta)$ by the function $|e(\xi)|^{2-\del}f_a(\xi,\Im\eta)$ in the argument to prove (\ref{C3}).  
\end{proof}
\begin{remark}
In the case when $\al=0$  the constant $C$ in (\ref{C3}), (\ref{D3}) depends only on $d,\La/\la$. For $\al>0$ the constant $C$ also depends on the constant in the inequality (\ref{A*3}) of Hypothesis 4.1. 
\end{remark}
The inequalities (\ref{S1}), (\ref{T1}) of Theorem 1.3 are a  consequence now of Theorem 4.1 and the following result which compares the lattice Green's function  $G^{\rm lattice}_{\mathbf{a}_{\rm hom}}(x,t)$ to the Green's function $G_{\mathbf{a}_{\rm hom}}(x,t)$ for the PDE (\ref{F1}): 
\begin{lem}
Assuming $4d\La\le 1$,  then there exist positive constants $\ga,C$  depending only on $d,\La/\la$  such that for $x\in\Z^d, \ t\in\Z$ with $\La t\ge 1$,  
\be \label{J*3}
|G_{\mathbf{a}_{\rm hom}}(x,t)-G^{\rm lattice}_{{\bf a}_{\rm hom}}(x,t)|  \le  \frac{C}{[\La t+1]^{(d+1)/2}} \exp\left[-\ga\min\left\{|x|, \ \frac{ |x|^2}{\La t+1}\right\}\right] \ ,
 \ee
\be \label{K*3}
 |\na G_{\mathbf{a}_{\rm hom}}(x,t)-\na G^{\rm lattice}_{{\bf a}_{\rm hom}}(x,t)|  \le \frac{C}{[\La t+1]^{(d+2)/2}} \exp\left[-\ga\min\left\{|x|, \ \frac{ |x|^2}{\La t+1}\right\}\right] \ ,
 \ee
 \be \label{L*3}
 |\na\na G_{\mathbf{a}_{\rm hom}}(x,t)-\na\na G^{\rm lattice}_{{\bf a}_{\rm hom}}(x,t)|  \le \frac{C}{[\La t+1]^{(d+3)/2}} \exp\left[-\ga\min\left\{|x|, \ \frac{ |x|^2}{\La t+1}\right\}\right] \ .
 \ee
\end{lem}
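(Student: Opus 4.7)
The plan is to express both Green's functions as Fourier integrals and compare them termwise. Performing the $\Im\eta$ integral in (\ref{I*3}) via identity (\ref{P*2}) yields
\[
G^{\rm lattice}_{\mathbf{a}_{\rm hom}}(x,t) \ = \ \frac{1}{(2\pi)^d}\int_{[-\pi,\pi]^d} e^{-i\xi\cdot x}\bigl[1-e(\xi)^*q(0,0)e(\xi)\bigr]^t \ d\xi,
\]
while $G_{\mathbf{a}_{\rm hom}}$ has the standard Gaussian Fourier representation
\[
G_{\mathbf{a}_{\rm hom}}(x,t) \ = \ \frac{1}{(2\pi)^d}\int_{\R^d} e^{-i\xi\cdot x - t\xi^* q(0,0)\xi} \ d\xi,
\]
recalling $q(0,0)=\mathbf{a}_{\rm hom}$. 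I split $G_{\mathbf{a}_{\rm hom}}-G^{\rm lattice}_{\mathbf{a}_{\rm hom}}$ as the integral over $[-\pi,\pi]^d$ of the difference of the two integrands, plus a tail integral of $G_{\mathbf{a}_{\rm hom}}$ over $\R^d\setminus[-\pi,\pi]^d$ whose contribution decays super-polynomially in $\La t+1$ after the contour shift below. The exponential factor $\exp[-\ga\min(|x|,|x|^2/(\La t+1))]$ is generated by the same shift $\xi\mapsto\xi+ia$ with $a$ chosen as in (\ref{H3}) that was used in the proof of Theorem 4.1; the condition $4d\La\le 1$ keeps $|e(\xi+ia)^*q(0,0)e(\xi+ia)|$ bounded away from $1$ along the shifted contour, so $[1-\cdot]^t$ remains well-controlled.

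The heart of the proof is the pointwise comparison of the two integrands. Write $\kappa:=\xi^* q(0,0)\xi$ and $\kappa':=e(\xi)^*q(0,0)e(\xi)$. The Taylor expansion $e_j(\xi)=-i\xi_j-\xi_j^2/2+O(|\xi|^3)$ gives $\kappa'-\kappa=O(|\xi|^3)$ (in fact $O(|\xi|^4)$ by symmetry, though the cruder bound suffices), and the identity
\[
e^{-t\kappa}-(1-\kappa')^t \ = \ e^{-t\kappa}\Bigl[1-\exp\bigl(t\{\log(1-\kappa')+\kappa\}\bigr)\Bigr]
\]
combined with $\log(1-\kappa')+\kappa=(\kappa-\kappa')-\kappa'^2/2-\cdots=O(|\xi|^3)$ bounds the difference of integrands by $Ct|\xi|^3 e^{-ct|\xi|^2}$ (after the contour shift, with the exponential in $x$ peeled off into the prefactor). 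Since $\int_{\R^d}|\xi|^3 e^{-ct|\xi|^2}d\xi=O((\La t+1)^{-(d+3)/2})$, the total picks up at least one extra factor $(\La t+1)^{-1/2}$ beyond the bare Gaussian bound, producing (\ref{J*3}).

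For (\ref{K*3}) and (\ref{L*3}) I insert the Fourier multipliers $e(\xi)$, respectively $e(\xi)\otimes e(\xi)$, into the lattice integrand (these are the symbols of the discrete gradient and Hessian), and $i\xi$, respectively $-\xi\otimes\xi$, into the continuous one. Each such factor contributes an extra $O(|\xi|)$ to the integrand, yielding an additional $(\La t+1)^{-1/2}$ after integration and producing the claimed weights $(\La t+1)^{-(d+2)/2}$ and $(\La t+1)^{-(d+3)/2}$. The remaining discrepancy $e(\xi)-(-i\xi)=O(|\xi|^2)$ between the two symbols falls into the same error class as the $O(|\xi|^3)$ Taylor term and does not change the conclusion.

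The principal obstacle is preserving the bound $|\log(1-\kappa')+\kappa|\le C|\xi|^3$ and the Gaussian control of the shifted integrand uniformly in the intermediate regime $|\xi|\sim|a|$, where neither purely Gaussian decay in $\xi$ nor polynomial smallness of the Taylor remainder dominates. This is handled exactly as in the proof of Theorem 4.1: along the shifted contour with $|a|\lesssim 1/\sqrt{\La t+1}$, the prefactor growth $\exp[C\La|a|^2(t+1)]$ is balanced against the Gaussian in $\xi$, while Corollary 2.2 (via (\ref{AD2})) provides the lower bound on $|1-e(\xi+ia)^*q(0,0)e(\xi+ia)|$ needed for the Taylor estimates to survive the shift. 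The argument is entirely deterministic---no hypothesis on $\Phi$ is required---since we are comparing only constant-coefficient operators.
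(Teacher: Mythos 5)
Your argument is correct in outline, but it is not the paper's argument: it is exactly the alternative Fourier-comparison route that the paper mentions (citing \cite{mr} for the elliptic case) and then deliberately avoids. The paper instead works in physical space, in the style of a consistency-plus-stability estimate from numerical analysis: it regards the continuum solution $u_{\rm hom}$ as an approximate solution of the discrete scheme (\ref{N*3}), writes the discrepancy by the Duhamel formula (\ref{T*3}) with truncation errors $h_1,h_2$ of (\ref{R*3}), (\ref{S*3}) controlled by third and fourth derivatives of $u_{\rm hom}$, exploits the cancellation (\ref{V*3}) to replace $G^{\rm lattice}_{{\bf a}_{\rm hom}}(x-y,t-r)$ by a difference of lattice Green's functions in (\ref{W*3}), and then combines the Gaussian bounds (\ref{X*3}), (\ref{Y*3}) with the unit-cube average (\ref{U*3}) to reach the pointwise estimate; (\ref{K*3}), (\ref{L*3}) follow by differencing (\ref{T*3}) in $x$. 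Your route buys a direct symbol comparison---essentially the constant-coefficient specialization of the proof of Theorem 4.1, with no $Q_0$-averaging step and with the contour-shift machinery (\ref{H3}) already set up; the paper's route buys freedom from complex-analytic symbol estimates (for constant $q(0,0)$ the lower bound you quote from Corollary 2.2 reduces to an elementary computation in the spirit of Lemma 5.1), needing only crude heat-kernel bounds. Two points to tighten if you write yours out. First, track $\La$ in the Taylor remainder: since $q(0,0)\le\La I_d$, one has $|\log(1-\kappa')+\kappa|\le C\La|\xi|^3$, and it is this factor of $\La$ that makes the final constant depend only on $d$ and $\La/\la$; your bound $Ct|\xi|^3$, taken literally, would produce a constant degenerating as $\La\ra0$. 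Second, in (\ref{K*3}), (\ref{L*3}) the operator $\na$ is the discrete gradient (\ref{E1}) applied to both kernels, so the cleanest version of your multiplier step inserts the same factor $e(\xi)$ (respectively $e(\xi)\otimes e(\xi)$) into both integrands, with no symbol discrepancy at all; if you instead differentiate the continuum kernel, the discrepancy $e(\xi)+i\xi=O(|\xi|^2)$ contributes terms of exactly the allowed order, so the conclusion stands but the bookkeeping should be made explicit.
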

\begin{proof}
Taking $\mathbf{a}_{\rm hom}=q(0,0)$ in (\ref{F1}), we see from (\ref{I*3})  that  $G^{\rm lattice}_{{\bf a}_{\rm hom}}(\cdot,\cdot)$ is the Green's function for the discrete parabolic equation  corresponding to (\ref{F1}),
\be \label{N*3}
u(x,t+1,\om)-u(x,t,\om) \ = \ -\nabla^*\mathbf{a}_{\rm hom}\nabla u(x,t,\om) \ , \quad x\in \Z^d, \ t=0,1,2....
\ee
 To prove the theorem we follow a standard method of numerical analysis for estimating error between the solution of a continuous problem and its approximating discrete problems. The method is to regard the solution of the continuous problem as an approximate solution to the discrete problem. An alternative approach based on comparison of the Fourier representation  (\ref{I*3})  of the lattice  Green's function $G^{\rm lattice}_{\mathbf{a}_{\rm hom}}(\cdot,\cdot)$ to the Fourier representation of the continuous Green's function  $ G_{\mathbf{a}_{\rm hom}}(\cdot,\cdot)$  is pursued in \cite{mr} for the case of elliptic equations. 
 
 Let $f:\R^d\ra\R$ be a nonnegative $C^\infty$ function with support contained in the ball $\{x\in\R^d: |x|<1\}$ and $u(x,t)=u_{\rm hom}(x,t)$ be the solution to the initial value problem (\ref{F1}), (\ref{G1}).  With $\na_x,\na_x^*$ denoting the discrete operators (\ref{E1}), we have  that
\begin{multline} \label{O*3}
u(x+z,t+1)-u(x+z,t)+\na_x^* {\bf a}_{\rm hom}\na_x u(x+z,t) \ = \\
 u(x+z,t+1)-u(x+z,t)+{\rm Trace}[ {\bf a}_{\rm hom}A(x+z,t)]  \ ,  \quad x\in \Z^d, \ z\in\R^d, \ \ t=0,1,..,
\end{multline}
where the $d\times d$ matrix $A(y,t)=[A_{i,j}(y,t)], \ y\in\R^d,t>0$ is given by the formula
\be \label{P*3}
A_{i,j}(y,t) \ = \ u(y,t)+u(y+\mathbf{e}_j-\mathbf{e}_i,t)-u(y+\mathbf{e}_j,t)-u(y-\mathbf{e}_i,t) 
= \  -E\left[ \ \frac{\pa^2 u(y+Y_{i,j},t)}{\pa  y_i\pa y_j}\right] \ ,
\ee
with $Y_{i,j}$ the random variable uniformly distributed in the unit square $\{y_j\mathbf{e}_j-y_i\mathbf{e}_i\in\R^d: 0\le y_i,y_j\le 1\}$. It follows then from (\ref{O*3}), (\ref{P*3}) that
\be \label{Q*3}
u(x+z,t+1)-u(x+z,t)+\na_x^* {\bf a}_{\rm hom}\na_x u(x+z,t) \ = \ h_1(x+z,t)-h_2(x+z,t) \ ,
\ee
where the functions $h_j(\cdot,\cdot), \ j=1,2$ are given by the formulas
\be \label{R*3}
 h_1(y,t) \ = \ E\left[ \ \frac{\pa u(y,t+T)}{\pa  t}\right]-\frac{\pa u(y,t)}{\pa t} \ , \quad y\in\R^d,t>0 ,
\ee
\be \label{S*3}
h_2(y,t)  \  = \   \sum_{i,j=1}^d
  {\bf a}_{\rm hom}(i,j)\left\{ E\left[ \ \frac{\pa^2 u(y+Y_{i,j},t)}{\pa  y_i\pa y_j}\right] -\frac{\pa^2 u(y,t)}{\pa y_i\pa y_j}\right\} \ , \quad y\in\R^d,t>0 .
\ee 
In (\ref{R*3}) the random variable $T$ is uniformly distributed in the interval $0<T<1$. 
 Since $u(x+z,0)=f(x+z), \ x\in\Z^d,$ we conclude from (\ref{Q*3}) that
\begin{multline} \label{T*3}
u(x+z,t) \ = \ \sum_{y\in\Z^d} G^{\rm lattice}_{{\bf a}_{\rm hom}}(x-y,t)f(y+z) +
\sum_{r=1}^t\sum_{y\in\Z^d} G^{\rm lattice}_{{\bf a}_{\rm hom}}(x-y,t-r)h_1(y+z,r-1) \\
-\sum_{r=1}^t\sum_{y\in\Z^d} G^{\rm lattice}_{{\bf a}_{\rm hom}}(x-y,t-r)h_2(y+z,r-1) \  .
\end{multline}
Let $Q_0\subset\R^d$ be the unit cube centered at the origin. Then we have that
\begin{multline} \label{U*3}
\int_{Q_0} dz \ \left[u(x+z,t) - \sum_{y\in\Z^d} G^{\rm lattice}_{{\bf a}_{\rm hom}}(x-y,t)f(y+z)\right] \ = \\
\left[G_{{\bf a}_{\rm hom}}(x,t)-G^{\rm lattice}_{{\bf a}_{\rm hom}}(x,t)\right]\int_{\R^d} f(y) \ dy+
{\rm Error}(x) \ ,
\end{multline}
where $|{\rm Error}(x)|$ is bounded by the RHS of (\ref{J*3}). 

Next  observe from (\ref{F1}), (\ref{R*3}), (\ref{S*3}) that
\be \label{V*3}
\int_{Q_0} dz \ \sum_{y\in\Z^d} h_j(y+z,t) \ = \ 0 \quad {\rm for \ } j=1,2.
\ee
It follows from (\ref{V*3})  that if we integrate the third term on the RHS of (\ref{T*3}) with respect to $z\in Q_0$ it is equal to
\be \label{W*3}
\int_{Q_0} dz \ \sum_{r=1}^t\sum_{y\in\Z^d} [G^{\rm lattice}_{{\bf a}_{\rm hom}}(x-y,t-r)-G^{\rm lattice}_{{\bf a}_{\rm hom}}(x,t-r)]h_2(y+z,r-1)  \ .
\ee
Using the fact that the distribution of $Y_{j,i}$ is the same as the distribution of $-Y_{i,j}$ we see from (\ref{S*3}) that $h_2(\cdot,t)$ is bounded by the fourth derivative of $u(\cdot,t)$, whence we conclude that there are constants $\ga,C$  depending only on $d$ such that
\be \label{X*3}
|h_2(y,t)| \ \le \ \frac{C\La\|f\|_\infty}{[\La t+1]^{(d+4)/2}} \exp\left[- \frac{\ga |y|^2}{\La t+1}\right] \ , \quad y\in\R^d,t>0.
\ee
We also have that there are constants $\ga,C$  depending only on $d$ such that
\be \label{Y*3}
 |\na G^{\rm lattice}_{{\bf a}_{\rm hom}}(y,t)|  \le \frac{C}{[\La t+1]^{(d+1)/2}} \exp\left[-\ga\min\left\{|y|, \ \frac{ |y|^2}{\La t+1}\right\}\right] \ , \quad y\in\Z^d, t\in\Z^+.
 \ee
Using (\ref{X*3}), (\ref{Y*3}) we can estimate (\ref{W*3}) and see that it is bounded by the RHS of (\ref{J*3}). Since we can do a similar estimate with the function $h_2$ replaced by $h_1$ we conclude from (\ref{U*3}) that (\ref{J*3}) holds. We can obtain the bounds (\ref{K*3}), (\ref{L*3}) by taking the gradient of (\ref{T*3}) with respect to $x$ and following the previous argument. 
\end{proof}
The inequality (\ref{U1}) of Theorem 1.3 is a consequence of Lemma 4.3 and the following:
\begin{theorem}
Assume Hypothesis 4.1 holds and $4d\La\le 1$.  Then there exist positive constants $\al,\ga,$ with  $\al\le 1$, depending only on $d,\La/\la$ and a constant $C$ such that for $x\in\Z^d, \ t\in\Z,t\ge0,$ 
\be \label{Z*3}
|\na\na G_{{\bf a}}(x,t)-\na\na G^{\rm lattice}_{{\bf a}_{\rm hom}}(x,t)|  \le  \frac{C}{[\La t+1]^{(d+2+\alpha)/2}} \exp\left[-\ga\min\left\{|x|, \ \frac{ |x|^2}{\La t+1}\right\}\right] \ .
 \ee
\end{theorem}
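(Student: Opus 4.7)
The plan is to combine the H\"older continuity of $\na F$ supplied by Theorem~4.1 with the H\"older continuity of $\na\na G_{\mathbf a}$ due to Delmotte--Deuschel \cite{dd} via an interpolation argument, where $F(x,t)=G_{\mathbf a}(x,t)-G^{\rm lattice}_{\mathbf a_{\rm hom}}(x,t)$.  First I would invoke the Delmotte--Deuschel H\"older regularity on the averaged Green's function $G_{\mathbf a}$, together with a direct Fourier analysis of (\ref{I*3}) for the constant-coefficient object $G^{\rm lattice}_{\mathbf a_{\rm hom}}$, to produce an exponent $\beta\in(0,1]$ depending only on $d,\La/\la$ and a constant $C$ such that
\begin{equation*}
|\na\na F(x',t)-\na\na F(x,t)|\ \le\ \frac{C|x'-x|^\beta}{[\La t+1]^{(d+2+\beta)/2}}\exp\!\left[-\ga \min\!\left\{|x|,\frac{|x|^2}{\La t+1}\right\}\right]
\end{equation*}
whenever $x,x'\in\Z^d$ and $|x'-x|$ is at most a fixed fraction of $|x|+\sqrt{\La t+1}$.

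Next, for any integer $h\ge 1$ and any coordinate direction $\mathbf{e}_j$, the telescoping identity
\begin{equation*}
\na_j\na F(x,t)\ =\ \frac{\na F(x+h\mathbf{e}_j,t)-\na F(x,t)}{h}\ -\ \frac{1}{h}\sum_{k=0}^{h-1}\bigl[\na_j\na F(x+k\mathbf{e}_j,t)-\na_j\na F(x,t)\bigr]
\end{equation*}
reduces matters to estimating each term on the right. The first term is controlled by (\ref{E3}) of Theorem~4.1 applied to $F$, yielding a bound of the form $Ah^{-\del}$ for any small $\del\in(0,1)$ with corresponding $\al<\del$, where $A\le C_\del[\La t+1]^{-(d+2-\del+\al)/2}$ times the Gaussian factor above. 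The second term is controlled by the H\"older estimate of the previous paragraph and produces $Bh^{\beta}$ with $B\le C[\La t+1]^{-(d+2+\beta)/2}$ times the same Gaussian. Choosing $h^{\ast}\sim(A/B)^{1/(\beta+\del)}$ to balance the two contributions and repeating for each coordinate pair $(i,j)$ then gives
\begin{equation*}
|\na\na F(x,t)|\ \le\ \frac{C'}{[\La t+1]^{(d+2+\al')/2}}\exp\!\left[-\ga\min\!\left\{|x|,\frac{|x|^2}{\La t+1}\right\}\right],\qquad \al'=\frac{\al\beta}{\beta+\del}\ >\ 0,
\end{equation*}
which is (\ref{Z*3}) after relabeling $\al'$ as $\al$.

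The principal obstacle is the first step: invoking the Delmotte--Deuschel H\"older estimate for $\na\na G_{\mathbf a}$ with the stated Gaussian decay rate, uniformly in the random (time-dependent) environment.  A secondary technical point is verifying that the optimal $h^{\ast}$ can be taken to be a positive integer of size at most a small fraction of $|x|+\sqrt{\La t+1}$, so that (\ref{E3}) applies and the Gaussian factors at $x$ and $x+h^{\ast}\mathbf{e}_j$ are comparable; by choosing $\al$ close enough to $\del$ one has $h^{\ast}\le [\La t+1]^{1/2-\epsilon}$, and when $h^{\ast}$ would exceed $|x|/2$ one instead estimates the first term through the unconditional gradient bound (\ref{D3}) of Theorem~4.1.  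Finally, the regime $\La t+1=O(1)$ is dealt with directly from the bare Gaussian bounds on $\na\na G_{\mathbf a}$ (Delmotte--Deuschel) and $\na\na G^{\rm lattice}_{\mathbf a_{\rm hom}}$ (classical), for which any positive $\al$ suffices.
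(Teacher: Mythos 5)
Your argument is correct in substance, but it reaches (\ref{Z*3}) by a genuinely different mechanism than the paper. The paper's proof smooths $G_{\bf a}$ with a space-time mollifier $\chi_L$ at scale $L=R^{1-\del}$, $R\sim\sqrt{\La t+1}$, estimates $\chi_L*\na\na G_{\bf a}-\chi_L*\na\na G^{\rm lattice}_{{\bf a}_{\rm hom}}$ directly in Fourier space (shifting the contour by $a$ as in (\ref{H3}), using the decay (\ref{AB*3})--(\ref{AC*3}) of $\hat\chi_L$ and the H\"older continuity of $q(\cdot,\cdot)$ from Proposition 4.1), and then controls the unmollified remainder $\na\na G_{\bf a}-\chi_L*\na\na G_{\bf a}$ by the Delmotte--Deuschel H\"older estimate. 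You instead take the already-proven gradient-level results of Theorem 4.1 as a black box and upgrade them to the Hessian by a real-space interpolation: telescoping $\na_j\na F$ into a first difference of $\na F$ over a scale $h$ (controlled by (\ref{E3}), which carries the homogenization gain $\al$) plus averaged second-difference increments (controlled by the Delmotte--Deuschel H\"older bound together with the explicit smoothness of $G^{\rm lattice}_{{\bf a}_{\rm hom}}$ from (\ref{I*3})), and then optimizing $h\sim h^*$. Both proofs hinge on exactly the same external input --- the annealed H\"older continuity of $\na\na G_{\bf a}$ with parabolic scaling and Gaussian decay, which the paper also simply cites from \cite{dd} --- and your use of it (spatial increments at fixed $t$) is if anything weaker than the paper's (space-time increments under the mollifier). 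What your route buys is the avoidance of the second Fourier/contour-shift computation, at the cost of a slightly smaller final exponent $\al\beta/(\beta+\del)$; what the paper's route buys is that the gain is produced once, at the level of the multiplier $q$, for all orders of derivatives.

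One bookkeeping point to tighten: in the regime where $h^*>|x|/2$ (so (\ref{E3})'s constraint $1/2\le(|x'|+1)/(|x|+1)\le2$ fails), falling back on (\ref{D3}) with the \emph{same} $h^*$ does not quite close, since matching exponents would force $\al'\le\al\bigl(1-1/(\beta+\del)\bigr)$, which is vacuous when $\beta+\del\le1$. The fix is immediate: in that regime $|x|\lesssim[\La t+1]^{1/2-\epsilon}$, so the Gaussian factor is harmless, and you should re-optimize $h$ against the (\ref{D3}) bound, which yields the exponent $\al\beta/(1+\beta)>0$; taking the minimum of the two exponents gives (\ref{Z*3}). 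With that adjustment, and your separate treatment of $\La t+1=O(1)$, the proof is complete.
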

\begin{proof}
Let $\chi:\R^{d+1}\ra\R$ be a $C^\infty$ function with compact support such that the integral of $\chi(\cdot)$ over $\R^{d+1}$ equals $1$.  We write
\be \label{AA*3}
G_{{\bf a}}(x,t) \ = \ \chi_L*G_{{\bf a}}(x,t) +[G_{{\bf a}}(x,t)-\chi_L*G_{{\bf a}}(x,t)] \ ,
\ee
where $\chi_L(x,t)= \La L^{-(d+2)}\chi(x/L,\La t/L^2), \ x\in\R^d,t\in\R$, and $*$ denotes convolution on $\Z^{d+1}$.  Let $\hat{\chi}_L(\zeta,\theta), \ \zeta\in[-\pi,\pi]^d, \ \theta\in[-\pi,\pi]$   be the Fourier transform (\ref{AE2}) of $\chi_L(\cdot,\cdot)$ restricted to the $\Z^{d+1}$ lattice. Since   $\chi_L(\cdot,\cdot)$  has compact support $\hat{\chi}_L(\cdot,\cdot)$ has an analytic continuation to $\C^{d+1}$. Furthermore for $L\ge 1$ there is a  constant $C$ such that
\be \label{AB*3}
|\hat{\chi}_L(0,0)-1|\le C/L, \quad \left| \hat{\chi}_L(\zeta+ia,\theta-i\La|a|^2)\right|
\le  \ C\exp[C|a|^2L^2] \quad a\in\R^d.
\ee
There also exists for positive integers $n$ constants $C_n$ such that
\be \label{AC*3}
\left| \hat{\chi}_L(\zeta+ia,\theta-i\La|a|^2)\right|
\le \frac{C_n}{[1+L|\zeta|+L^2|\theta|/\La]^n} \quad {\rm if \ } |a|L\le 1.
\ee
We assume now that $R<\sqrt{\La t+1}<2R$ and choose $L=R^{1-\del}$ for some $\del>0$.  Then
from (\ref{D2}) we see that
\be \label{AD*3}
\chi_L*\na_k\na_jG_{{\bf a}}(x,t) \ = 
 \frac{1}{(2\pi)^{d+1}}\exp\left[a\cdot x/C+\La|a|^2(t+1)\right]  
 \int_{[-\pi,\pi]^{d}}\int_{-\pi}^\pi 
f_a(\zeta,\theta) \ d\theta \ d\zeta \ ,
\ee
where $a$ is given by (\ref{H3}) and $f_a(\zeta,\theta)$ is defined by
\be \label{AE*3}
f_a(\zeta,\theta) \ = \ \frac{e_k(\zeta+ia/C)e_j(\zeta+ia/C)\hat{\chi}_L(\zeta+ia/C,\theta-i\La|a|^2)
e^{-i\zeta.x+i\theta(t+1)}}{e^{\La|a|^2+i\theta}-1+e(\zeta-ia/C)^*q(\zeta+ia/C,\La|a|^2+i\theta)e(\zeta+ia/C)}  \ .
\ee
It follows from Corollary 2.2 and  the second inequality of (\ref{AB*3})  that if $|a|L\ge 1$  there is a constant $C_1$ such that 
\begin{multline} \label{AF*3}
\exp\left[a\cdot x/C+\La|a|^2(t+1)\right]  
 \int_{[-\pi,\pi]^{d}}\int_{-\pi}^\pi 
|f_a(\zeta,\theta)| \ d\theta \ d\zeta  \ \le \\
\frac{C_1}{[\La t+1]^{(d+3)/2}} \exp\left[-\ga\min\left\{|x|, \ \frac{ |x|^2}{\La t+1}\right\}\right] \ .
\end{multline}
If $|a|L<1$ we also have from Corollary 2.2 and (\ref{AC*3}) that
\begin{multline} \label{AG*3}
\int_{[-\pi,\pi]^{d}\cap\{|\zeta|>1/R^{1-2\del}\}}\int_{-\pi}^\pi 
|f_a(\zeta,\theta)| \ d\theta \ d\zeta+ \\
\int_{[-\pi,\pi]^{d}}\int_{[-\pi,\pi]\cap\{\sqrt{|\theta|/\La}>1/R^{1-2\del}\}}
|f_a(\zeta,\theta)| \ d\theta \ d\zeta  \ \le  \  C_1/[\La t+1]^{(d+3)/2}
\end{multline}
for some constant $C_1$.

To estimate the integral of $f_a(\zeta,\theta)$ over the set $\{(\zeta,\theta): |\zeta|<1/R^{1-2\del}, \ 
\sqrt{|\theta|/\La}<1/R^{1-2\del}\}$ we use the H\"{o}lder continuity (\ref{A3}) of the function $q(\cdot,\cdot)$.  Thus let $g_a(\cdot,\cdot)$ be defined similarly to the function $f_a(\cdot,\cdot)$ by
\be \label{AH*3}
g_a(\zeta,\theta) \ = \ \frac{e_k(\zeta+ia/C)e_j(\zeta+ia/C)\hat{\chi}_L(\zeta+ia/C,\theta-i\La|a|^2)
e^{-i\zeta.x+i\theta(t+1)}}{e^{\La|a|^2+i\theta}-1+e(\zeta-ia/C)^*q(0,0)e(\zeta+ia/C)}  \ .
\ee
Then from (\ref{A3}) we see that
\be \label{AI*3}
\int_{\{|\zeta|<1/R^{1-2\del}\}}\int_{\sqrt{|\theta|/\La}<1/R^{1-2\del}}
|f_a(\zeta,\theta)-g_a(\zeta,\theta)| \ d\theta \ d\zeta\le C_2/[\La t+1]^{d+2+\al)(1-2\del)}
\ee
for some constant $C_2$.  We choose now $\del>0$ in (\ref{AI*3}) sufficiently small so that $(d+2+\al)(1-2\del)>d+2$.  It follows then from (\ref{AF*3}), (\ref{AG*3}), (\ref{AI*3}) that 
$|\chi_L*\na_k\na_jG_{{\bf a}}(x,t)-\chi_L*\na_k\na_jG^{\rm lattice}_{{\mathbf{a}_{\rm hom}}}(x,t)|$ is bounded by the RHS of (\ref{Z*3}). 

To complete the proof of the inequality (\ref{Z*3}) we use the H\"{o}lder continuity result of \cite{dd}.
Thus from the first inequality of (\ref{AB*3}) and \cite{dd} we see that $|\na_k\na_jG_{{\bf a}}(x,t)-\chi_L*\na_k\na_jG_{{\bf a}}(x,t)]|$ is bounded by the RHS of (\ref{Z*3}) for some $\al>0$.  The result follows. 
\end{proof} 
We can essentially repeat the foregoing arguments for the continuous time averaged Green's function  $G_{{\bf a}}(x,t), \ x\in\Z^d, t\ge 0,$ for (\ref{D1}).  In the continuius time case our hypothesis is: 
\begin{hypothesis}
Let $T_{\xi,\eta}$ be the operator (\ref{AN2}) on the Hilbert space $\mathcal{H}(\Om)$ and  $T^*_{\xi,\eta}$denote its adjoint.  Then for $ k\ge 1, \ p_2=p_3=\cdots=p_k=1,$  and $S_{\xi,\eta}=T_{\xi,\eta}$ or $S_{\xi,\eta}=T^*_{\xi,\eta}$, there exists $p_0(\La/\la)>1$ depending only on $d,\La/\la$  and a constant $C(k)$ such that
\begin{multline}  \label{AJ*3}
\left\|\sum_{x_1,...x_k\in\Z^{d+1}}\int_{\R^k} dt_1\cdots dt_k \left\{\prod_{j=1}^k g_j(x_j,t_j)\tau_{x_j,-t_j}P\mathbf{b}(\cdot)[I-PS_{\xi,\eta}\mathbf{b}(\cdot)]^{-1}\right\}v\right\| \\
 \le \  C(k)\prod_{j=1}^k \|g_j\|_{p_j}|v| \quad  {\rm for \ }g_j\in L^{p_j}(\Z^{d}\times\R,\C^d\otimes\C^d), \ j=1,..,k, \ v\in\C^d,
\end{multline}
provided $1\le p_1\le p_0(\La/\la)$ and  $\xi\in\C^d, \ \eta\in\C$ satisfy  $0<\Re\eta<\La,  \ |\Im\xi|\le C_1\sqrt{\Re\eta/\La}$, with $C_1$ depending only on $d,\La/\la$.  
\end{hypothesis}
Assuming Hypothesis 4.2 holds, we can prove the analogues of Proposition 4.1, Theorem 4.1 and Theorem 4.2 for the continuous case. Theorem 1.3 therefore follows in the continuous time case once we are able to establish Hypothesis 4.2.

\vspace{.1in}

\section{Independent Variable Environment}
Our goal in this section will be to prove Hypothesis 3.1 and its generalized form Hypothesis 4.1 in the case when the variables ${\bf a}(\tau_{x,t}\cdot), \ x\in\Z^d, t\in\Z$, are independent.  Following \cite{cn1} we first consider the case of a Bernoulli environment.  Thus for each $x\in\Z^{d}, t\in\Z,$ let $Y_{x,t}$ be independent Bernoulli variables, whence $Y_{x,t}=\pm 1$ with equal probability. The probability space $(\Om,\mathcal{F},P)$ is then the space generated by all the variables $Y_{x,t}, \ (x,t)\in\Z^{d+1}$.  A point $\om\in\Om$ is a set of configurations $\{(Y_n,n) \ : n\in\Z^{d+1}\}$. For $(x,t)\in\Z^{d+1}$ the translation operator $\tau_{x,t}$ acts on $\Om$ by taking the point $\om=\{(Y_n,n): n\in\Z^{d+1}\}$ to $\tau_{x,t}\om=\{(Y_{n+(x,-t)},n): n\in\Z^{d+1}\}$. The random matrix ${\bf a}(\cdot)$ is then defined by 
\be \label{A4}
{\bf a}(\om) \ = \ (1+\ga Y_0) I_d, \quad \om=\{(Y_n,n): n\in\Z^{d+1}\} \ ,
\ee
where $0\le \ga<1$.

In \cite{cn1} we defined for $1\le p<\infty$ Fock spaces $\mathcal{F}^p(\Z^{d+1})$  of complex valued functions, and observed that $\mathcal{F}^2(\Z^{d+1})$ is unitarily equivalent to $L^2(\Om)$. We can similarly define Fock spaces $\mathcal{H}_\mathcal{F}^p(\Z^{d+1})$ of vector valued functions with range $\C^{d}$, such that $\mathcal{H}_\mathcal{F}^2(\Z^{d+1})$  is unitarily equivalent to  $\mathcal{H}(\Om)$. Hence we can regard the operator $T_{\xi,\eta}$ of (\ref{G2}) as acting on $\mathcal{H}_\mathcal{F}^2(\Z^{d+1})$, and by unitary equivalence it is a bounded operator satisfying $\|T_{\xi,\eta}\|\le 1$ for $\xi\in\R^d, \ \Re\eta>0$.  From (\ref{G2}) we see that $T_{\xi,\eta}$ acts as a convolution operator on $N$ particle wave functions $\psi_N(\cdot)$ in $\mathcal{H}_\mathcal{F}^2(\Z^{d+1})$ as
\begin{multline} \label{B4}
T_{\xi,\eta} \psi_N(x_1,t_1,...,x_N,t_N) \ = \\
   \La\sum_{t'=1}^\infty e^{-\eta t'}\sum_{x\in \Z^d} \left\{\nabla\nabla^* G_{\La}(x',t'-1)\right\}^*\exp[-ix'\cdot\xi]  \ \psi_N(x_1-x',t_1-t',..,x_N-x',t_N-t') \ .
\end{multline}
Note that for all  $N$ particle wave functions,  $T_{\xi,\eta}$ acts as a convolution operator on functions on $\Z^{d+1}$. Hence its action is determined by its action on $1$ particle wave functions. Let  $\hat{\psi}_1(\zeta,\theta), \ \zeta\in[-\pi,\pi]^d, \  \theta\in[-\pi,\pi],$ be the Fourier transform (\ref{AE2})  of the $1$ particle wave function $\psi_1(x,t), \ x\in\Z^d,t\in\Z$.
 We see from (\ref{B4}) that for $\xi\in\C^d, \ \Re\eta>0,$  the action of $T_{\xi,\eta}$ in Fourier space is given by
\be \label{D4}
\hat{T}_{\xi,\eta} \hat{\psi}_1(\zeta,\theta) \ = \ \frac{\La e(\xi-\zeta)e(\bar{\xi}-\zeta)^*}{e^{\eta-i\theta}-1+\La e(\bar{\xi}-\zeta)^*e(\xi-\zeta)} \ \hat{\psi}_1(\zeta,\theta)  \  , \quad \zeta\in[-\pi,\pi]^d, \  \theta\in[-\pi,\pi].
\ee
Hence the result of Lemma 2.1 for the Bernoulli case follows from:
\begin{lem} Assume $4d\La\le 1$. Then there exist positive constants $C_1,C_2$ depending only on $d$ such that for $(\xi,\eta)$ in the region 
 $\{(\xi,\eta)\in\C^{d+1}: 0<\Re\eta<\La, \ |\Im\xi|<C_1\sqrt{\Re\eta/\La}\}$, there is the inequality 
 \be \label{E4}
 \La \max[ \  |e(\xi)|^2, \ |e(\bar{\xi})|^2] \ \le \ \left(1+C_2 |\Im\xi|^2/[
\Re\eta/\La]\right)|e^{\eta}-1+\La e(\bar{\xi})^*e(\xi)|  \  .
 \ee
\end{lem}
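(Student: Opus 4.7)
The strategy is to perturb the bound from the real-$\xi$ case. Write $\xi=s+ir$ with $s=\Re\xi$, $r=\Im\xi$, and denote $f:=e^\eta-1+\La\, e(\bar\xi)^*e(\xi)$. For $r=0$, $e(\bar s)^*e(s)=|e(s)|^2$ and $u:=\La|e(s)|^2\in[0,4d\La]\subseteq[0,1]$, since $|e_j(s)|^2=4\sin^2(s_j/2)\le 4$. The complex number $e^\eta-1+u=e^{\Re\eta}e^{i\Im\eta}+(u-1)$ then traces, as $\Im\eta$ varies, a circle of radius $e^{\Re\eta}$ centered at $u-1$, and since $|u-1|=1-u\le 1<e^{\Re\eta}$ the origin lies in its interior, yielding the clean lower bound
\[
|e^\eta-1+u|\ \ge\ e^{\Re\eta}-(1-u)\ \ge\ \Re\eta+u,
\]
which will serve as the baseline.

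For complex $\xi$, write $e(\bar\xi)^*e(\xi)=|e(s)|^2+R(\xi)$. With $r_j=r\cdot\mathbf{e}_j$, $s_j=s\cdot\mathbf{e}_j$, direct expansion gives
\[
\overline{e_j(\bar\xi)}\,e_j(\xi)\ =\ (e^{-r_j+is_j}-1)(e^{r_j-is_j}-1)\ =\ 2-2\cosh r_j\cos s_j+2i\sinh r_j\sin s_j,
\]
so $R(\xi)=2\sum_j[(1-\cosh r_j)\cos s_j+i\sinh r_j\sin s_j]$. For $|r|\le 1$ one has $|1-\cosh r_j|\le Cr_j^2$, $|\sinh r_j|\le C|r_j|$, and $|\sin s_j|\le 2|\sin(s_j/2)|=|e_j(s)|$; Cauchy--Schwarz then gives $|R(\xi)|\le C|r|^2+C|r|\,|e(s)|$. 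Combined with the baseline,
\[
|f|\ \ge\ \Re\eta+\La|e(s)|^2-C\La|r|^2-C\La|r|\,|e(s)|.
\]
Applying Young's inequality $C|r|\,|e(s)|\le \tfrac12|e(s)|^2+\tfrac{C^2}{2}|r|^2$ absorbs half the main term, leaving $|f|\ge\Re\eta+\tfrac12\La|e(s)|^2-C'\La|r|^2$ with $C'=C+C^2/2$. Choosing $C_1^2\le 1/(2C')$, the hypothesis $|\Im\xi|^2<C_1^2\Re\eta/\La$ forces $C'\La|r|^2\le\Re\eta/2$, hence
\[
|f|\ \ge\ \tfrac12\bigl[\Re\eta+\La|e(s)|^2\bigr].
\]

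For the numerator, $|e_j(\xi)|^2=(e^{r_j}-1)^2+4e^{r_j}\sin^2(s_j/2)\le C(r_j^2+|e_j(s)|^2)$, and likewise for $|e_j(\bar\xi)|^2$, so $\max[|e(\xi)|^2,|e(\bar\xi)|^2]\le C(|r|^2+|e(s)|^2)$. Using the lower bound on $|f|$, $\La|e(s)|^2\le 2|f|$ directly and $\La|r|^2=(\La|r|^2/\Re\eta)\,\Re\eta\le 2(\La|r|^2/\Re\eta)|f|$, whence
\[
\La\max\bigl[|e(\xi)|^2,|e(\bar\xi)|^2\bigr]\ \le\ C_2\bigl(1+\La|r|^2/\Re\eta\bigr)|f|,
\]
which is (\ref{E4}). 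The main delicacy is the simultaneous absorption of the cross-term $\La|r|\,|e(s)|$ into $\La|e(s)|^2$ via Young, and of the residual $\La|r|^2$ into $\Re\eta$; the smallness of $C_1$ in $|\Im\xi|<C_1\sqrt{\Re\eta/\La}$ is precisely what provides the slack needed for both absorptions.
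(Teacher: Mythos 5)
Your strategy is the same as the paper's own proof (its (\ref{F4})--(\ref{I4})): perturb off the real-$\xi$ lower bound, control the correction $e(\bar\xi)^*e(\xi)-|e(\Re\xi)|^2$ by $C[\,|\Im\xi|^2+|\Im\xi|\,|e(\Re\xi)|\,]$, and absorb it using Young's inequality and the smallness of $C_1$; your algebra for $R(\xi)$ and for $|e_j(\xi)|^2$ is correct. The problem is the endpoint: what you actually establish is $\La\max[|e(\xi)|^2,|e(\bar\xi)|^2]\le C_2\bigl(1+\La|\Im\xi|^2/\Re\eta\bigr)\,|e^\eta-1+\La e(\bar\xi)^*e(\xi)|$, and this is \emph{not} (\ref{E4}). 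In (\ref{E4}) the constant multiplies only $|\Im\xi|^2/[\Re\eta/\La]$, so the prefactor of the right-hand side tends to $1$ when $\La|\Im\xi|^2/\Re\eta$ is small; your prefactor is a fixed constant which your own steps force to be at least $2$ no matter how small $C_1$ is (you halve the main term in $|f|\ge\frac12[\Re\eta+\La|e(s)|^2]$, pay a factor $2$ again in $\La|e(s)|^2\le2|f|$, and your numerator bound $|e_j(\xi)|^2\le C(r_j^2+|e_j(s)|^2)$ has $C>1$). This difference is not cosmetic for this paper: Lemma 5.1 is what yields $\|T_{\xi,\eta}\|_p\le[1+\del(p)]\bigl(1+C_2|\Im\xi|^2/[\Re\eta/\La]\bigr)$ in Lemma 5.2 and Corollary 5.1, and these norms are then raised to high powers against $\|\mathbf{b}(\cdot)\|\le1-\la/\La$ in Neumann/geometric series (summing (\ref{X2}) in Corollary 2.1, and the bounds (\ref{DK5})--(\ref{DL5})); convergence is obtained precisely by shrinking $C_1$ so that the multiplier bound drops below $(1-\la/\La)^{-1}$, which is impossible for a bound of the form $C_2(1+\cdots)$ with a fixed $C_2$ bounded away from $1$. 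So the concluding claim ``which is (\ref{E4})'' overstates what was proved; as written, the lemma is not established.

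The gap is repairable inside your computation, but you must keep the leading coefficients equal to $1$ rather than discarding them. For the numerator write $|e_j(\xi)|^2=|e_j(s)|^2+(e^{r_j}-1)^2+4(e^{r_j}-1)\sin^2(s_j/2)$, which gives $\La\max[|e(\xi)|^2,|e(\bar\xi)|^2]\le\La|e(s)|^2+C\La[\,|r|^2+|r|\,|e(s)|\,]$, i.e.\ the paper's (\ref{I4}). For the denominator, absorb the cross term with a small Young weight instead of $\tfrac12$, and use $\La|r|^2\le C_1^2\Re\eta$, to get $|f|\ge[3/4-CC_1^2]\Re\eta+[1-C^2\La|r|^2/\Re\eta]\,\La|e(s)|^2$, i.e.\ the paper's (\ref{H4}). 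Comparing the two bounds termwise, the numerator's excess consists of a piece of size $O(\La|r|^2/\Re\eta)\cdot\La|e(s)|^2$, a piece $O(\La|r|^2)\le C_1^2\Re\eta$, and (after one more Young step on $C\La|r|\,|e(s)|$) a small fixed multiple of $\Re\eta$; the $\Re\eta$ pieces are covered by the $[3/4-CC_1^2]\Re\eta$ term once $C_1$ and the Young weight are small, while the $O(\La|r|^2/\Re\eta)$ discrepancy in the coefficient of $\La|e(s)|^2$ is exactly what the factor $1+C_2|\Im\xi|^2/[\Re\eta/\La]$ is there to absorb. (Also record $C_1\le1$ explicitly, since your estimates $|1-\cosh r_j|\le Cr_j^2$, $|\sinh r_j|\le C|r_j|$, $e^{r_j}\le e$ presuppose $|r|\le1$.)
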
 
\begin{proof}
We have that
\begin{multline} \label{F4}
|e^{\eta}-1+\La e(\bar{\xi})^*e(\xi)|  \ \ge \ e^{\Re\eta}-|1-\La e(\bar{\xi})^*e(\xi)| \\
\ge \  e^{\Re\eta}-1+\La e(\Re\xi)^*e(\Re\xi) -\La|e(\Re\xi)^*e(\Re\xi)-e(\bar{\xi})^*e(\xi)| \ ,
\end{multline}
where we have used the fact that $4d\La\le 1$. Observe that there is a constant $C$ depending only on $d$ such that
\begin{multline} \label{G4}
\La|e(\Re\xi)^*e(\Re\xi)-e(\bar{\xi})^*e(\xi)|  \ \le \ C\La[|\Im\xi|^2+|\Im\xi||e(\Re\xi)|] \\
\le \ C^2\{\La e(\Re\xi)^*e(\Re\xi)\} |\Im\xi|^2/[\Re\eta/\La] +[1/4+CC_1^2]\Re\eta \ .
\end{multline}
We conclude then from (\ref{F4}), (\ref{G4}) that
\be \label{H4}
|e^{\eta}-1+\La e(\bar{\xi})^*e(\xi)|  \ \ge \ [3/4-CC_1^2]\Re\eta+[1-C^2|\Im\xi|^2/[\Re\eta/\La] \La e(\Re\xi)^*e(\Re\xi) \  .
\ee
The inequality (\ref{E4}) follows from (\ref{H4}) by observing similarly to (\ref{G4}) that
\be \label{I4}
\La |e(\xi)|^2 \ \le \  \La e(\Re\xi)^*e(\Re\xi)+C\La[|\Im\xi|^2+|\Im\xi||e(\Re\xi)|]  \ .
\ee 
\end{proof}
\begin{lem} Assume $4d\La\le 1$. Then there exist positive constants $C_1,C_2,C_3$ depending only on $d$ such that for $(\xi,\eta)$ in the region  $\{(\xi,\eta)\in\C^{d+1}: 0<\Re\eta<\La, \ |\Im\xi|<C_1\sqrt{\Re\eta/\La}\}$,  the operator $T_{\xi,\eta}$ of (\ref{B4}) is bounded on $\mathcal{H}_\mathcal{F}^p(\Z^{d+1})$ for $p=3/2$ or $p=3$, and the norm  $\|T_{\xi,\eta}\|_p$ of $T_{\xi,\eta}$ satisfies the inequality
$\|T_{\xi,\eta}\|_p\le C_3\left(1+C_2 |\Im\xi|^2/[\Re\eta/\La]\right)$.
\end{lem}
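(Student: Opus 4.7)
The plan is to realize $T_{\xi,\eta}$ as a convolution operator on $\Z^{d+1}$ at each Fock particle level and then apply a parabolic Calder\'{o}n--Zygmund theorem to promote the $L^2$ bound already implicit in Lemma 5.1 to $L^p$ bounds for $p=3/2,3$.

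First I would observe, using (\ref{B4}), that on every $N$-particle sector $T_{\xi,\eta}$ acts as convolution against the single $N$-independent kernel
\[
K_{\xi,\eta}(x,t)=\La[\na\na^*G_\La(x,t-1)]^*\,e^{-ix\cdot\xi-\eta t}\,\mathbf{1}_{\{t\ge 1\}},\qquad (x,t)\in\Z^{d+1}.
\]
Since $\mathcal{F}^p(\Z^{d+1})$ is the natural $L^p$-Fock space built in \cite{cn1}, bounding $T_{\xi,\eta}$ on $\mathcal{H}_\mathcal{F}^p$ reduces to bounding the convolution $f\mapsto K_{\xi,\eta}*f$ on $L^p(\Z^{d+1},\C^d\otimes\C^d)$. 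By Lemma 5.1, applied with $\xi$ replaced by $\xi-\zeta$ in the Fourier symbol (\ref{D4}), the multiplier $\widehat{K_{\xi,\eta}}$ is pointwise bounded in absolute value by $C_3(1+C_2|\Im\xi|^2/[\Re\eta/\La])$, which gives the $L^2$ version of the claim via Plancherel.

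To extend to $p=3/2$ and $p=3$, I would check that $K_{\xi,\eta}$ is a parabolic Calder\'{o}n--Zygmund kernel. The bound (\ref{I2}) combined with the known discrete parabolic regularity shows that $|\na\na^*G_\La(x,t-1)|$ is at most $(\La t+1)^{-(d+2)/2}$ times a Gaussian at scale $\sqrt{\La t+1}$, together with H\"{o}lder-type differences of the same form with one extra factor $(\La t+1)^{-1/2}$. These are precisely the size and smoothness hypotheses of Jones's parabolic Calder\'{o}n--Zygmund theorem \cite{j,cz} for the metric $d((x,t),(x',t'))=|x-x'|+\sqrt{\La|t-t'|}$. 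The factor $e^{-ix\cdot\xi-\eta t}$ is harmless on the parameter region $0<\Re\eta<\La$, $|\Im\xi|<C_1\sqrt{\Re\eta/\La}$, since the growth $e^{|\Im\xi||x|}$ is absorbed into the Gaussian tail when $C_1$ is chosen small enough, exactly as in the computation (\ref{S2}) of Lemma 2.1. Jones's theorem then yields $L^p$ boundedness for every $1<p<\infty$ with Calder\'{o}n--Zygmund constants uniform in $(\xi,\eta)$; Marcinkiewicz interpolation with the sharper $L^2$ bound of the previous paragraph preserves the multiplicative factor $1+C_2|\Im\xi|^2/[\Re\eta/\La]$, and by duality ($3/2$ and $3$ are conjugate) it is enough to verify one of the two indices.

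The main obstacle is the uniformity of the Calder\'{o}n--Zygmund constants in the complex parameters $(\xi,\eta)$, together with the requirement that the resulting $L^p$ bound inherit the sharp dependence $1+C_2|\Im\xi|^2/[\Re\eta/\La]$ rather than a weaker one. Both are handled by observing that the size and smoothness constants of $K_{\xi,\eta}$ depend only on the Gaussian envelope of $\na\na^*G_\La$ and on a bounded exponential factor, while the sharp $\Im\xi$-dependence enters solely through the $L^2$ endpoint via Lemma 5.1 and is propagated unchanged by the interpolation.
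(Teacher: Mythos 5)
Your overall strategy is the one the paper uses: reduce to the one-particle convolution, obtain the $L^2$ bound from the symbol estimate of Lemma 5.1 via (\ref{D4}), and run Jones's parabolic Calder\'on--Zygmund argument on the kernel built from $\na\na^*G_\La$. The gap is in the step where you declare the factor $e^{-ix\cdot\xi-\eta t}$ harmless and conclude that $K_{\xi,\eta}$ is a parabolic Calder\'on--Zygmund kernel with constants uniform in $(\xi,\eta)$. The absorption argument you quote from (\ref{S2}) controls only the modulus $e^{x\cdot\Im\xi-t\Re\eta}$; it says nothing about the unimodular oscillation $e^{-ix\cdot\Re\xi-it\Im\eta}$, and that oscillation does ruin the uniformity of the smoothness (H\"ormander) constant. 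Indeed, writing $K_{\xi,\eta}(x,t)=e^{-ix\cdot\Re\xi-it\Im\eta}K_0(x,t)$ with $K_0(x,t)=\La\{\na\na^*G_\La(x,t-1)\}^*e^{x\cdot\Im\xi-\Re\eta t}\mathbf{1}_{\{t\ge1\}}$, the difference $K_{\xi,\eta}(z-w)-K_{\xi,\eta}(z)$ contains the term $\bigl(e^{iw\cdot(\Re\xi,\Im\eta)}-1\bigr)K_0(z)$, and since $\sum_x|\na\na^*G_\La(x,t)|\sim(\La t+1)^{-1}$, the sum of $|K_0|$ over the region far from the origin is of order $\log(\La/\Re\eta)$. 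So for $\Re\xi$ or $\Im\eta$ of order one (take $\Re\xi=\pi{\bf e}_1$, where the unit spatial difference of $(-1)^{x_1}\na\na^*G_\La$ has no cancellation at all) the H\"ormander constant of your kernel grows like $\log(\La/\Re\eta)$ as $\Re\eta\ra0$, and the chain ``uniform kernel constants, then Jones, then interpolation'' does not deliver a bound uniform over $0<\Re\eta<\La$, which is exactly what the lemma asserts.

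The missing idea, which is also what makes the paper's own estimates suffice, is modulation invariance: conjugating with the multiplication operator $f\mapsto e^{i(x\cdot\Re\xi+t\Im\eta)}f$, an isometry of every $\mathcal{H}_\mathcal{F}^p(\Z^{d+1})$, turns $T_{\xi,\eta}$ into convolution with the unoscillating kernel $K_0$ above, so $\|T_{\xi,\eta}\|_p$ depends only on $(\Im\xi,\Re\eta)$. For $K_0$ your size and H\"older-difference estimates are correct and uniform on the region $|\Im\xi|<C_1\sqrt{\Re\eta/\La}$; this is precisely why the paper's (\ref{S4}), (\ref{T4}), (\ref{U4}) involve only $\Re\eta$ and $\Im\xi$ and never the order-one frequencies $\Re\xi,\Im\eta$. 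With that reduction in place, the rest of your plan (weak-type estimate via the Calder\'on--Zygmund decomposition in the parabolic geometry, then passage to $L^p$ and duality/interpolation with the sharp $L^2$ symbol bound) coincides with the paper's proof, the only presentational difference being that the paper carries out the decomposition on $\La$-adapted rectangles explicitly, in order to track the constants, rather than invoking Jones's continuum theorem as a black box.
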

\begin{proof}
It will be sufficient for us to prove the theorem on the space of $1$ particle wave functions.  To do this we follow the argument of Jones \cite{j}, which adapts the methodology of Calderon-Zygmund \cite{cz} to Fourier multipliers associated with parabolic PDE.  A more general theory of Fourier multipliers can be found in Chapter IV of \cite{stein}, but because of the generality it is hard to estimate the values of constants using this theory.

For a set $E\subset\Z^{d+1},$ we denote by $|E|$  the number of lattice points of $\Z^{d+1}$ contained in $E$. Let $\psi(x,t), \ x\in\Z^d,t\in\Z,$ be a $1$ particle wave function with finite support. We shall show that for any $\ga>0$, the set $E_\ga=\{(x,t)\in\Z^{d+1} \ : \ |T_{\xi,\eta}\psi(x,t)|>\ga\}$ satisfies the inequality
\be \label{J4}
|E_\ga| \ \le \ C_4\left(1+C_2 |\Im\xi|^2/[\Re\eta/\La\right)\ga^{-2}\sum_{(x,t)\in\Z^{d+1}} \min[|\psi(x,t)|, \ \ga]^2+C_5 \beta^\psi(\ga) \ ,
\ee
where $C_2$ is the constant of Lemma 5.1 and $C_4,C_5$ depend only on $d$. The function $\beta^\psi(\cdot)$ is defined in \cite{cz,j} in terms of the distribution function of $\psi(\cdot,\cdot)$. Once (\ref{J4}) is proved the result follows from the argument of \cite{cz}, which shows that $\|T_{\xi,\eta}\|_p$ is simply bounded in terms of the constants occurring in (\ref{J4}). 

We use a Calderon-Zygmund decomposition to prove (\ref{J4}). Recalling that $1/\La\ge 4d,$ let $N_0\ge 2$ be the integer which satisfies  $2^{N_0}\le 1/\La <2^{N_0+1}$.  We choose $a_1,..a_d,b\in\Z$ and sufficiently large integer $N_1$ such that the rectangle $R=\{(x,t)=(x_1,.,x_d,t)\in\R^{d+1} \ : \ a_j+1/2\le x_j\le 2^{N_1}+a_j+1/2,  \ j=1,..,d, \ {\rm and \ }\ b+1/2\le t\le 2^{2N_1+N_0}+b+1/2 \ \}$ contains the support of $\psi(\cdot,\cdot)$  and
\be \label{K4}
\frac{1}{|R|}\sum_{(x,t)\in R\cap\Z^{d+1}} |\psi(x,t)| \ \le \ \ga \ .
\ee
Note that the length of the side of $R$ in the $t$ direction is $2^{N_0}$ times the square of the length of a side in an $x_j$ direction for all $1\le j\le d$.  We subdivide $R$ into $2^d\times 4$ sub-rectangles with the same property and continue to similarly subdivide until we reach  a set of disjoint rectangles $R_m, \ m=1,..,M_1,$ with side in the $x_j, \ 1\le j\le d,$ direction a non-negative power of $2$, which satisfy the inequality
\be \label{L4}
\ga \ < \ \frac{1}{|R_m|}\sum_{(x,t)\in R_m} |\psi(x,t)| \ \le \ 2^{d+2}\ga \ , \quad 1\le m\le M_1,
\ee
together with a set of  rectangles $R'_m, \ m=1,2,...M_2,$ with side in the $x_j, \ 1\le j\le d,$ direction equal to $1$ and equal to $2^{N_0}$ in the $t$ direction which satisfy
\be \label{M4}
\frac{1}{|R'_m|}\sum_{(x,t)\in R'_m} |\psi(x,t)| \ \le \ \ga \ .
\ee 
We  subdivide the rectangles  $R'_m, \ m=1,..,M_2,$ into $2$ rectangles with side in the $t$ direction of length $2^{N_0-1}$, and continue to subdivide until we reach  a set of disjoint rectangles $R_m, \ m=M_1+1,..,M,$   with side in the $t$ direction a non-negative power of $2$, which satisfy the inequality
\be \label{N4}
\ga \ < \ \frac{1}{|R_m|}\sum_{(x,t)\in R_m} |\psi(x,t)| \ \le \ 2\ga \ , \quad M_1+1\le m\le M,
\ee
together with a set of unit cubes centered at lattice points of $\Z^{d+1}$. 
Setting $D_\ga=\cup_{m=1}^M R_m$, one sees that $\R^{d+1}-D_\ga$ is a union of unit cubes centered at lattice points of $\Z^{d+1}$, whence
\be \label{O4}
|\psi(x,t)| \ \le \ \ga \quad {\rm for \ }(x,t)\in \Z^{d+1}\cap[\R^{d+1}-D_\ga] \ .
\ee

We consider the distribution function $\ga\ra |\{(x,t)\in\Z^{d+1} \ : \ |\psi(x,t)|>\ga\}|$ of $\psi(\cdot,\cdot)$ with domain $\{\ga\ge 0\}$, which is a piece-wise constant right  continuous decreasing  function with range $0\le s\le |{\rm supp}[\psi(\cdot,\cdot)]|$. The decreasing rearrangement  $\psi^*(s)$ of $\psi(\cdot,\cdot)$ with domain $s\ge 0$ is also a piece-wise constant right  continuous decreasing  function satisfying  $\psi^*(0)=\sup|\psi(\cdot,\cdot)|$ and $\psi^*(s)=0$ for $s\ge |{\rm supp}[\psi(\cdot,\cdot)]|$. It is the approximate right continuous inverse of the distribution function for $0\le s\le  |{\rm supp}[\psi(\cdot,\cdot)]|$ . In view of (\ref{L4}), (\ref{N4}) we have that
\be \label{P4}
\ga \ < \ \frac{1}{|D_\ga|}\sum_{(x,t)\in D_\ga} |\psi(x,t)| \  \le \ \frac{1}{|D_\ga|}\int_0^{|D_\ga|} \psi^*(s) \ ds  \ = \ \beta_\psi(|D_\ga|)\ ,
\ee
where the function $\beta_\psi(s)$ with domain $s\ge 0$ is decreasing and continuous with range $0<\ga\le   \sup|\psi(\cdot,\cdot)|$. There is a well-defined inverse function $\beta^\psi(\ga)$ for $\beta_\psi(\cdot)$ with domain $0<\ga<\sup|\psi(\cdot,\cdot)|$, and (\ref{P4}) implies that $|D_\ga|\le \beta^\psi(\ga)$. 

We write $\psi(\cdot,\cdot)=\psi_1(\cdot,\cdot)+\psi_2(\cdot,\cdot)$, where the function  $\psi_1(\cdot,\cdot)$ is defined by
\begin{multline} \label{Q4}
\psi_1(x,t) \ =  \ \frac{1}{|R_m|}\sum_{(x',t')\in R_m} \psi(x',t') \ {\rm if \ } (x,t)\in R_m  \ {\rm for \ some \ } m, \ 1\le m\le M,  \\
\psi_1(x,t) \ = \ \psi(x,t) \quad {\rm otherwise} \  .  
\end{multline} 
From Lemma 5.1 and (\ref{O4}) we have then that
\begin{multline}  \label{R4}
|\{(x,t)\in\Z^{d+1} \ : \ |T_{\xi,\eta}\psi_1(x,t)|>\ga/2 \ \}| \ \le  \\
\left(1+C_2 |\Im\xi|^2/[\Re\eta/\La\right)^2\left\{ 4\ga^{-2}\sum_{(x,t)\in\Z^{d+1}} \min[|\psi(x,t)|, \ \ga]^2 +2^{2d+6}|D_\ga| \ \right\} \ .
\end{multline}
To bound the distribution function of $\psi_2(\cdot,\cdot)$ which has support contained in $D_\ga$, we  consider a rectangle $R_m, \ 1\le m\le M,$ with center $(x^m,t^m)\in\Z^{d+1}$ and let $\tilde{R}_m$ be the double of $R_m$. We observe that  similarly to (\ref{I2}) there is a constant $C_d$ depending only on $d$ such that the function $\na\na^*G_\La(x,t)$ satisfies inequalities 
\begin{multline} \label{S4}
|e^{-\Re\eta(t+1)-ix\cdot\Im\xi}\na\na^*G_\La(x,t+1)-e^{-\Re\eta t-ix\cdot\Im\xi}\na\na^*G_\La(x,t)| \\
 \le \frac{C_d}{(t+1)[\La t+1]^{d/2+1}}\exp\left[-\frac{\min\left\{|x|, \ |x|^2/(\La t+1)\right\}}{C_d}\right] \ ,
 \end{multline}
 \begin{multline} \label{T4}
|e^{-\Re\eta t-i(x+\mathbf{e}_j)\cdot\Im\xi}\na\na^*G_\La(x+\mathbf{e}_j,t)-e^{-\Re\eta t-ix\cdot\Im\xi}\na\na^*G_\La(x,t)| \\
 \le \frac{C_d}{[\La t+1]^{d/2+3/2}}\exp\left[-\frac{\min\left\{|x|, \ |x|^2/(\La t+1)\right\}}{C_d}\right] \ ,  \ j=1,.,d,
\end{multline}
provided $\xi\in\C^d, \ \eta\in\C,$ satisfy the conditions in the statement of the lemma. Extending the function $G_\La(x,t), \ x\in\Z^d, \ t=0,1,2,..,$ defined by (\ref{H2})  to have domain $\Z^{d+1}$ by setting $G_\La(x,t)=0$ for $x\in\Z^d, \ t<0$, we conclude from (\ref{S4}), (\ref{T4}) that if $(x',t')\in R_m,$  then there is a constant $C_d$ depending only on $d$ such that
\begin{multline}  \label{U4}
\sum_{(x,t)\in\Z^{d+1}-\tilde{R}_m}\La \  |e^{-\Re\eta(t-t')-i(x-x')\cdot\Im\xi}\na\na^*G_\La(x-x', t-t')\\
-e^{-\Re\eta(t-t^m)-i(x-x^m)\cdot\Im\xi}\na\na^*G_\La(x-x^m,t-t^m)| \ \le \ C_d.
\end{multline}
It follows from (\ref{L4}), (\ref{N4}), (\ref{U4}) that if $\tilde{D}_\ga=\cup_{m=1}^M\tilde{R}_m,$   then
\be \label{V4}
\sum_{(x,t)\in\Z^{d+1}-\tilde{D}_\ga} |T_{\xi,\eta}\psi_2(x,t)| \ \le \   C_d\ga |D_\ga|
\ee
for some constant $C_d$ depending only on $d$. Hence we have that
\be \label{W4}
|\{(x,t)\in\Z^{d+1} \ : \ |T_{\xi,\eta}\psi_2(x,t)|>\ga/2 \ \}| \ \le \  2C_d|D_\ga|+|\tilde{D}_\ga| \ \le \ [2C_d+2^{d+2}]|D_\ga| \ .
\ee
The inequality (\ref{J4}) follows from (\ref{R4}) and (\ref{W4}). 
\end{proof}
\begin{corollary}
Under the assumptions of Lemma 5.2 the operator $T_{\xi,\eta}$ is bounded on $\mathcal{H}^p(\Z^{d+1})$ for $3/2\le p\le 3,$ and  $\|T_{\xi,\eta}\|_p\le [1+\del(p)]\left(1+C_2 |\Im\xi|^2/[\Re\eta/\La]\right)$, where the function $\del(\cdot)$ depends only on $d$ and $\lim_{p\ra 2}\del(p)=0$. 
\end{corollary}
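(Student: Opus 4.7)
The plan is to interpolate between an $L^2$ bound of the type already proven in Lemma 2.1 and the $L^{3/2}$, $L^3$ bounds provided by Lemma 5.2, using the fact that interpolation at $p=2$ produces a geometric mean of the endpoint constants and therefore shrinks the bad constant $C_3$ down to $1$. From the unitary equivalence between $\mathcal{H}_\cF^2(\Z^{d+1})$ and $\mathcal{H}(\Om)$ recorded at the beginning of $\S5$, together with Lemma 2.1, one has
\[
\|T_{\xi,\eta}\|_2 \ \le \ 1+C_2 |\Im\xi|^2/[\Re\eta/\La]
\]
throughout the region described in the corollary. Lemma 5.2 supplements this with
\[
\|T_{\xi,\eta}\|_p \ \le \ C_3\bigl(1+C_2 |\Im\xi|^2/[\Re\eta/\La]\bigr)\quad\text{for }p\in\{3/2,3\}.
\]

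Next I would apply the Riesz--Thorin convexity theorem (as in \cite{sw}). Because $T_{\xi,\eta}$ acts on each $N$--particle sector of $\mathcal{H}_\cF^p(\Z^{d+1})$ as the same $\C^d\otimes\C^d$-valued convolution operator on $\Z^{d+1}$, its operator norm on the Fock space coincides with the operator norm of that single convolution on $\ell^p(\Z^{d+1};\C^d)$, where complex interpolation is classical. For $p\in[2,3]$, writing $1/p=(1-\theta)/2+\theta/3$ with $\theta=\theta(p)\in[0,1]$, Riesz--Thorin yields
\[
\|T_{\xi,\eta}\|_p \ \le \ \|T_{\xi,\eta}\|_2^{1-\theta}\, \|T_{\xi,\eta}\|_3^{\theta} \ \le \ C_3^{\,\theta}\bigl(1+C_2 |\Im\xi|^2/[\Re\eta/\La]\bigr),
\]
and the analogous interpolation between $p=3/2$ and $p=2$ handles $p\in[3/2,2]$ with $1/p=2\theta/3+(1-\theta)/2$. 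Setting $\del(p)=C_3^{\,\theta(p)}-1$ gives a function depending only on $d$ (through $C_3$) which tends to $0$ as $\theta(p)\to 0$, that is, exactly as $p\to 2$, which is what the corollary asserts.

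There is no substantive obstacle here; the only points that deserve a sentence in the write-up are (i) that the interpolation is being applied to a complex-linear operator acting on $\C^d$-valued $\ell^p$ functions, so the standard Riesz--Thorin theorem (in its vector-valued form) applies directly, and (ii) that the particle-number decomposition of the Fock space $\mathcal{H}_\cF^p(\Z^{d+1})$ reduces the operator-norm computation to a single convolution sector, uniformly in $N$, because the kernel in \eqref{B4} is independent of $N$.
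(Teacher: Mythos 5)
Your proposal is correct and follows essentially the same route as the paper, whose proof is precisely Riesz--Thorin interpolation between the $L^2$ bound (Lemma 5.1, equivalently Lemma 2.1 via the unitary equivalence you invoke) and the $p=3/2,3$ bounds of Lemma 5.2, with the interpolation exponent $\theta(p)\to 0$ as $p\to 2$ giving $\del(p)=C_3^{\theta(p)}-1\to 0$. The reduction to the one-particle convolution sector that you flag is also the paper's own reduction, so no further comment is needed.
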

\begin{proof}
The result follows from Lemma 5.1, Lemma 5.2 and the Riesz-Thorin interpolation theorem \cite{sw}.
\end{proof}
 \begin{proof}[Proof of Hypothesis 4.1] 
 We choose $q_0=q_0(\La/\la)$ with $1<q_0<2$ so that $\del(q_0)\le\la/2\La$, where  $\del(\cdot)$ is the function in the statement of Corollary  5.1. It follows then from Young's inequality that Hypothesis 4.1 holds if we choose $p_0=p_0(\La/\la)>1$ with $1/p_0+1/q_0=3/2$.
  It is shown in \cite{cn1} how to extend the argument for the Bernoulli environment corresponding to (\ref{A4}) to general i.i.d. environments  ${\bf a}(\tau_{x,t}\cdot), \ (x,t)\in\Z^{d+1}$. We have therefore proven Hypothesis 4.1 for ${\bf a}(\tau_{x,t}\cdot), \ (x,t)\in\Z^{d+1}$, i.i.d. such that  (\ref{A1}) holds.
 \end{proof}
 
 \vspace{.1in}
 
  \section{Massive Field Theory Environment}
 In this section we show that Hypothesis 3.2 and its generalization Hypothesis 4.2 holds if $(\Om,\mathcal{F},P)$ is given by the massive field theory environment determined by (\ref{J1}), (\ref{K1}). We recall the main features of the construction of this measure. Let $L$ be a positive even integer and $Q=Q_L\subset \Z^d$ be the integer lattice points in the cube centered at the origin with side of length $L$. By a periodic function $\phi:Q\times\R\ra\R$ we mean a function $\phi$ on $Q\times\R$ with the property that $\phi(x,t)=\phi(y,t)$ for all $x,y\in Q, \ t\in\R,$ such that $x-y=L{\bf e}_k$ for some $k, \ 1\le k\le d$. Let $\Om_Q$ be the space of continuous in time periodic functions $\phi:Q\times\R\ra\R$ and $\mathcal{F}_Q$ be the Borel algebra generated by the requirement that the functions $\phi(\cdot,\cdot)\ra\phi(x,t)$ from $\Om_Q\ra\R$ are Borel measurable for all $x\in Q$ and $t$ rational. 
 For $m>0$ we define a probability measure $P_{Q}$ on $(\Om_Q,\mathcal{F}_Q)$ by first defining expectations of functions of the variables $\phi(x,0), \ x\in Q,$ as follows:
\begin{multline} \label{A5}
<F(\phi(\cdot,0))>_{\Om_Q}= \\
 \int_{\R^{L^d}} F(\phi(\cdot))\exp\left[-\sum_{x\in Q} \left\{V(\nabla \phi(x))+\frac{1}{2}m^2\phi(x)^2\right\}\right]  \prod_{x\in Q} d\phi(x)/{\rm normalization} \ ,
 \end{multline}
 where $F:\R^{L^d}\ra\R$ is a continuous function such that $|F(z)|\le C\exp[A|z|], \ z\in\R^{L^d}$, for some constants $C,A$.  
By translation invariance of the measure (\ref{A5}) we see that  $\langle\phi(x,0)\rangle_{\Om_Q}= 0$ for all $x\in Q$ and hence the  Brascamp-Lieb inequality  \cite{bl}  applied to (\ref{A5}) and function $F(\phi(\cdot))=\exp[(f,\phi)]$, where $(\cdot,\cdot)$ is the Euclidean inner product for periodic functions on $Q$,   yields the inequality 
 \be \label{B5}
 \langle  \exp[(f,\phi)]\rangle_{\Om_Q} \ \  \le  \ \  \exp\left[\frac{1}{2} (f, \{-\la\Del+m^2\}^{-1} f)\right] \ .
 \ee
 
 The variables $\phi(x,t), \ x\in Q,t>0,$ are determined from the variables $\phi(x,0), \ x\in Q,$ by solving the stochastic differential  equation
\be \label{C5}
d\phi(x,t) \ = \ -\frac{\pa}{\pa\phi(x,t)}\sum_{x'\in Q} \frac{1}{2}\{V(\na\phi(x',t))+m^2\phi(x',t)^2/2\} \ dt +dB(x,t) \ , \quad x\in Q, t>0, 
\ee
 where $B(x,\cdot), \ x\in Q,$ are independent copies of Brownian motion modulo the periodicity constraint on $Q$.  Since (\ref{A5}) is the invariant measure for the stochastic process $\phi(\cdot,t), \ t\ge 0$, it follows that (\ref{A5}), (\ref{C5}) determine a stationary process for $t\ge 0$, which therefore can be extended to all $t\in\R$.  Furthermore the functions $t\ra\phi(x,t)$ on $\R$ are continuous with probability $1$ for all $x\in Q$.  The probability measure $P_Q$ on $(\Om_Q,\mathcal{F}_Q)$ is the measure induced by the stationary process $\phi(\cdot,t), \ t\in\R$. 
 
 The probability space $(\Om,\mathcal{F},P)$ on  continuous in time fields $\phi:\Z^d\times\R\ra\R$ is obtained as the limit of the spaces $(\Om_Q,\mathcal{F}_Q,P_{Q})$ as $|Q|\ra\infty$. In particular one has from Lemma 2.4 of  \cite{c1} the following result:
\begin{proposition} Assume $m>0$ and let $F:\R^{k}\ra\R$ be a $C^1$ function which satisfies the inequality
\be \label{D5}
            |DF(z)|\le A\exp[ \ B|z| \ ], \quad z\in\R^{k},
 \ee
 for some constants $A,B$. Then for any $x_1,....x_k\in\Z^d,$ and $t_1,..,t_k\in\R,$ the limit
 \be \label{E5}
 \lim_{|Q|\ra\infty} \langle F\left(\phi(x_1,t_1),\phi(x_2,t_2),.....,\phi(x_k,t_k)\right)\rangle_{\Om_Q}= \langle F\left(\phi(x_1,t_1),\phi(x_2,t_2),.....,\phi(x_k,t_k)\right)\rangle
\ee
exists and is finite.
\end{proposition}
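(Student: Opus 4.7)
The plan is to prove the existence of the infinite-volume limit in two steps: first establish uniform (in $Q$) exponential moment bounds on $\phi(x,t)$ under $P_Q$, and second pass to the limit by combining weak convergence of finite-dimensional distributions with uniform integrability.

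For the first step, I would apply the Brascamp-Lieb inequality (\ref{B5}) with $f = \sum_{j=1}^k c_j \del_{x_j}$ for real coefficients $c_j$, which yields the uniform bound
\be \nonumber
\Av{\exp\Bigl[\sum_{j=1}^k c_j \phi(x_j,0)\Bigr]}_{\Om_Q} \ \le \ \exp\Bigl[\tfrac{1}{2}(f,\{-\la\Del_Q+m^2\}^{-1}f)\Bigr],
\ee
where $\Del_Q$ is the periodic discrete Laplacian on $Q$. Because $m>0$, the Green's function $\{-\la\Del_Q+m^2\}^{-1}(x,y)$ is majorized uniformly in $Q$ by the infinite-volume Green's function for $-\la\Del+m^2$ on $\Z^d$, which decays exponentially at rate $O(m/\sqrt{\la})$. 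Thus all joint exponential moments of $(\phi(x_1,0),\dots,\phi(x_k,0))$ are bounded uniformly in $Q$, and by the stationarity of the process solving (\ref{C5}), the same bound holds at any time. The hypothesis (\ref{D5}) then shows that $F\bigl(\phi(x_1,t_1),\dots,\phi(x_k,t_k)\bigr)$ is uniformly $P_Q$-integrable with considerable room to spare, since $|F(z)|\le |F(0)|+A|z|\exp[B|z|]$.

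For the second step, I would prove weak convergence of the joint law of $\bigl(\phi(x_i,t_i)\bigr)_{i=1}^k$ under $P_Q$ as $|Q|\ra\infty$. Using the Markov property of (\ref{C5}) one reduces to (i) weak convergence of the equilibrium measure (\ref{A5}) on the finite-dimensional marginal $(\phi(y,0))_{y\in S}$ for any finite $S\subset\Z^d$, and (ii) convergence of the transition semigroup $e^{-t\mathcal{L}_Q}$ of (\ref{C5}) on local test functions as $|Q|\ra\infty$. For (i), the Helffer-Sj\"{o}strand representation applied to the uniformly convex Hamiltonian in (\ref{A5}), together with the mass $m>0$, produces exponential decay of truncated correlations uniformly in $Q$, and standard arguments then yield a unique Gibbs limit. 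For (ii), one couples the finite-volume SDE (\ref{C5}) on $Q$ to the infinite-volume SDE (\ref{J1}) using common driving Brownian motions $B(x,\cdot)$; the difference of the two solutions satisfies a linear parabolic equation whose symmetric part is bounded below by $m^2$, so the difference at fixed $(x,t)$ with $x$ near the origin decays exponentially in $\mathrm{dist}(x,\pa Q)$ on any compact time interval.

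The main obstacle is this coupling/localization argument, because the drift in (\ref{C5}) couples each site to all its neighbours and a priori estimates have to survive to infinite volume. The key point that makes the argument work is that the combination of uniform convexity of $V$ and the strictly positive mass $m$ provides a spectral gap $\ge m^2$ for the linearization, producing exponential-in-time Lipschitz decay that dominates the polynomial-in-$L$ size of the boundary influence. Once the two-step program is completed, the limit in (\ref{E5}) exists and is finite, and by construction equals the expectation under the infinite-volume measure $P$ on $(\Om,\mathcal{F})$ described in the introduction. This is precisely the content of Lemma 2.4 of \cite{c1}, whose proof can be carried out along the lines sketched above.
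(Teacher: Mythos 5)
The first thing to note is that the paper itself offers no proof of this proposition: it is quoted verbatim from Lemma 2.4 of \cite{c1} (the sentence introducing it reads ``one has from Lemma 2.4 of \cite{c1} the following result''), and the construction of the measure is attributed to \cite{c1,fs}. So there is no in-paper argument to compare against; the question is whether your sketch would stand on its own. The first half does: applying (\ref{B5}) with $f=\sum_j c_j\del_{x_j}$, majorizing the periodic Green's function of $-\la\Del_Q+m^2$ uniformly in $Q$, and invoking stationarity in time gives uniform exponential moments, and together with (\ref{D5}) this yields uniform integrability. That part is correct and is exactly the role the Brascamp--Lieb inequality plays in \cite{c1,fs}.

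The second half, however, is a program rather than a proof, and its two hardest steps are precisely the ones left as assertions. For (i), uniform-in-$Q$ exponential decay of truncated correlations (via Helffer--Sj\"{o}strand and the mass gap) plus tightness gives subsequential limits, but it does not by itself give \emph{convergence} of $\langle F(\phi(\cdot,0))\rangle_{\Om_Q}$ as $|Q|\ra\infty$; ``standard arguments then yield a unique Gibbs limit'' is exactly the content that needs proving (uniqueness of the infinite-volume Gibbs state and identification of all subsequential limits with it, or a quantitative comparison of two volumes $Q\subset Q'$), and this is where the real work in \cite{c1} lies. For (ii), you need first the well-posedness of the infinite-dimensional SDE (\ref{J1}) in infinite volume, which you implicitly assume; and the coupling estimate as stated conflates two mechanisms: the spectral gap $\ge m^2$ coming from uniform convexity and the mass gives decay \emph{in time} of the linearized flow, whereas decay of the boundary influence \emph{in space} requires a separate finite-propagation / off-diagonal estimate for the linearized discrete parabolic equation (of the type (\ref{I2})) on the compact time interval, together with control of the (a priori unbounded) mismatch at $\pa Q$ using the moment bounds. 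None of these steps is likely to fail, but none is carried out, and the closing appeal to Lemma 2.4 of \cite{c1} makes the argument circular as a self-contained proof. As a blind reconstruction of the strategy behind \cite{c1,fs} it is a reasonable outline; as a proof it has genuine gaps at the existence-of-the-static-limit and infinite-volume-dynamics stages.
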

From (\ref{B5})  and the Helly-Bray theorem \cite{br,d} one sees that Proposition 6.1 implies the existence of a unique Borel probability measure on $\R^k$ corresponding to the probability distribution of the variables $(\phi(x_1,t_1),..,\phi(x_k,t_k))\in\R^k$, and this measure satisfies (\ref{E5}). The Kolmogorov construction \cite{br,d} then implies the existence of a Borel measure on fields $\phi:\Z^d\times\R\ra\R$ with finite dimensional distribution functions satisfying (\ref{E5}). We have constructed the  probability space $(\Om,\mathcal{F},P)$ corresponding to  (\ref{J1}),(\ref{K1}) for which $\Om$ is the set of continuous in time functions   $\phi:\Z^d\times\R\ra\R$, and it is clear that the translation operators $\tau_{x,t}, \ x\in\Z^d,t\in\R,$ are measure preserving and form a group.

The BL inequality \cite{bl} plays a crucial role in establishing the existence of the limit (\ref{E5})  in \cite{c1,fs}. In particular it yields a Poincar\'{e} inequality for the measure (\ref{A5}). Thus if  $F:\R^{L^d}\ra\R$ is a $C^1$ function such that $|DF(z)|\le C\exp[A|z|], \ z\in\R^{L^d}$, for some constants $C,A$, then
\be \label{F5}
{\rm var}_{\Om_Q}[F(\phi(\cdot,0))]=\langle \ [F(\phi(\cdot,0))-\langle F(\phi(\cdot,0))\rangle]^2\rangle_{\Om_Q} \ \le  \ \frac{1}{m^2} \langle \|dF(\phi(\cdot,0))\|^2\rangle_{\Om_Q} \ ,
\ee
where $dF(\phi(\cdot,0))\in \R^{L^d}$ is the gradient of $F$ at $\phi(\cdot,0)$.  A simple proof of (\ref{F5}) follows from the Helffer-Sj\"{o}strand (HS) representation \cite{hs}
\be \label{G5}
\langle F_1(\phi(\cdot,0))F_2(\phi(\cdot,0))\rangle_{\Om_Q} \ =  \ \langle dF_1(\phi(\cdot,0))[d^*d+\nabla^*V''(\nabla\phi(\cdot))\nabla+m^2]^{-1}dF_2(\phi(\cdot,0))\rangle_{\Om_Q} \ ,
\ee
which holds for $C^1$ functions $F_1,F_2:\R^{L^d}\ra\R$ that satisfy  $|F_j(z)|+|DF_j(z)|\le C\exp[A|z|], \ z\in\R^{L^d}, \ j=1,2$, for some constants $C,A$,  and $\langle F_1(\phi(\cdot,0))\rangle_{\Om_Q}=0$.  In (\ref{G5}) the operator $d^*$ is the adjoint of the gradient operator $d$ with respect to the measure (\ref{A5}), and hence $d^*d$ is a non-negative self-adjoint operator. 

Our first goal here will be to prove strong mixing of the operator $\tau_{\mathbf{e}_1,0}$ on $(\Om,\mathcal{F},P)$. In order to do this we will need a Poincar\'{e} inequality for the measure $(\Om_Q,\mathcal{F}_Q,P_{Q})$, in particular a generalization of (\ref{F5}) to functions $F(\phi(\cdot,t_1),..,\phi(\cdot,t_k))$ depending on values of the field $\phi(\cdot,\cdot)$ at different times.  To do this we follow the development of Gourcy-Wu \cite{gw} who make use of the Malliavin calculus \cite{nu} to prove a log-Sobolev inequality for such measures. The basic insight of the Malliavin calculus is that the Wiener space generated by independent Brownian motions  $B(x,t), \ x\in Q,t>0,$ can be identified with a probability space whose set of configurations is  the Hilbert space $L^2(Q\times\R^+)$, where $\R^+$ is the open interval $(0,\infty)$. We denote the Euclidean inner product on $L^2(Q\times\R^+)$ by $[\cdot,\cdot]$. The measure on $L^2(Q\times\R^+)$ is uniquely determined by the requirement that the variables 
$\psi\ra[\psi,\psi_j], \ j=1,..,k,$ are i.i.d. standard normal for any set of orthonormal vectors $\psi_j, \ j=1,..,k$. We denote this Malliavin probability space by $(\Om_{Q,{\rm Mal}},\mathcal{F}_{Q,{\rm Mal}},P_{Q,{\rm Mal}})$, where $\Om_{Q,{\rm Mal}}=L^2(Q\times\R^+)$ and $\mathcal{F}_{Q,{\rm Mal}}$ is determined by the requirement that the functions $\psi\ra[\psi,\psi_0]$ from $\Om_{Q,{\rm Mal}}$ to $\R$ are Borel measurable for all $\psi_0\in L^2(Q\times\R^+)$. 

The identification of the Wiener space with $(\Om_{Q,{\rm Mal}},\mathcal{F}_{Q,{\rm Mal}},P_{Q,{\rm Mal}})$ follows from the fact that the expectation of a function $F(\psi(\cdot,\cdot))$ with respect to $(\Om_{Q,{\rm Mal}},\mathcal{F}_{Q,{\rm Mal}},P_{Q,{\rm Mal}})$ is the same as the expectation of $F(W(\cdot,\cdot))$ with respect to Wiener space, where $W(\cdot,\cdot)$ is the white noise process corresponding to $B(\cdot,\cdot)$ in (\ref{C5}).  Hence the identification may be summarized as follows:
\be \label{H5}
\psi(x,t)\leftrightarrow W(x,t),   \quad W(x,t)= dB(x,t)/dt,  \quad x\in Q,t>0. 
\ee 
For $t>0$ let $\mathcal{F}_t$ be the $\sig-$field generated by the Brownian motions $B(x,s), \ x\in Q, s<t,$ of (\ref{C5}), so from (\ref{H5}) we can regard $\mathcal{F}_t$ as a sub $\sig-$field of  $\mathcal{F}_{Q,{\rm Mal}}$. We consider next vector fields $G: L^2(Q\times\R^+)\ra L^2(Q\times\R^+)$ on $\Om_{Q,{\rm Mal}}$ which are measurable in the sense that for any $\psi_0\in  L^2(Q\times\R^+)$ the function $\psi(\cdot,\cdot)\ra [ G(\psi(\cdot,\cdot)),\psi_0]$ is $(\Om_{Q,{\rm Mal}},\mathcal{F}_{Q,{\rm Mal}})$ measurable.  The vector field is {\it predictable} if  for any $t, \ 0<t<\infty$, 
$\psi_0$ has support in the interval $Q\times[0,t]$ implies that the function $[ G(\psi(\cdot,\cdot)),\psi_0]$  is $\mathcal{F}_t$ measurable. The Martingale representation theorem \cite{nu} implies that for any function $F\in L^2(\Om_{Q,{\rm Mal}})$ there is a predictable vector field $G: L^2(Q\times\R^+)\ra L^2(Q\times\R^+)$ such that
\begin{eqnarray} \label{I5}
{\rm var}_{\Om_{Q,{\rm Mal}}}[F(\cdot)] \ &=& \ \langle \  \|G(\cdot)\|^2 \ \rangle_{\Om_{Q,{\rm Mal}}} \ , \\
F(\psi(\cdot,\cdot))-\langle F(\cdot)\rangle \ &=& \  [G(\psi(\cdot,\cdot)),\psi(\cdot,\cdot)] \ . \nonumber
\end{eqnarray}
Suppose now that $F\in L^2(\Om_{Q,{\rm Mal}})$ also has a Malliavin derivative $D_{\rm Mal}F: L^2(Q\times\R^+)\ra L^2(Q\times\R^+)$  with the property that  $\langle \  \|D_{\rm Mal}F(\cdot)\|^2 \ \rangle_{\Om_{Q,{\rm Mal}}} <\infty$.  The Clark-Ocone formula \cite{nu} states  that the vector field $G(\psi(\cdot,\cdot))$ in (\ref{I5}) can be expressed in terms of the Malliavin derivative $D_{\rm Mal}F(\psi(\cdot,\cdot))$. Denoting the values of   $G(\psi(\cdot,\cdot)), \ D_{\rm Mal}F(\psi(\cdot,\cdot)),$ at $(x,t)\in Q\times\R^+$  by $G(x,t;\psi(\cdot,\cdot)), \ D_{\rm Mal}F(x,t;\psi(\cdot,\cdot))$ respectively, then
\be \label{J5}
G(x,t;\psi(\cdot,\cdot)) \ = \  \langle \ D_{\rm Mal}F(x,t;\psi(\cdot,\cdot)) \ |  \ \mathcal{F}_t\ \rangle_{\Om_{Q,{\rm Mal}}} \quad x\in Q, t>0.
\ee

We show how the Clark-Ocone formula  (\ref{I5}), (\ref{J5}) implies the  HS formula (\ref{G5}).  Let $\phi(\cdot,T)$ be the solution at time $T>0$ of (\ref{C5}) with initial data $\phi(\cdot,0)=0$ and $f:Q\ra\R$. We can find an expression for the Malliavin derivative of the function $F(\psi(\cdot,\cdot))=(f(\cdot),\phi(\cdot,T))$ by analyzing the first variation equation for (\ref{C5}).  Evidently one has that $D_{\rm Mal}F(x,t;\psi(\cdot,\cdot))=0$ for $x\in Q, t>T$. To get an expression for $D_{\rm Mal}F(x,t;\psi(\cdot,\cdot))$ when $t\le T$ we first note from (\ref{C5}) that 
\be \label{K5}
\frac{d}{dt} (f(\cdot),\phi(\cdot,t)) \ = \ -\frac{1}{2}\left\{ (\na f(\cdot), V'(\na\phi(\cdot,t)))+m^2(f(\cdot),\phi(\cdot,t)) \right\} +(f(\cdot), W(\cdot,t)) \ , \quad t>0.
\ee
It follows from (\ref{K5}) that for $\psi_0\in L^2(Q\times\R^+)$  the function $ \xi(\cdot,t)= [D_{\rm Mal}\phi(\cdot,t)),\psi_0]$  from $Q$ to $\R$ is a solution to the initial value problem
\begin{multline} \label{L5}
\frac{d}{dt}(f(\cdot),\xi(\cdot,t)) \ = \  -\frac{1}{2}\{ (\na f(\cdot), V''(\na\phi(\cdot,t))\na\xi(\cdot,t))+\\
m^2(f(\cdot),\xi(\cdot,t))\}+(f(\cdot),\psi_0(\cdot,t)) \ {\rm for  \ } t>0,  \ f:Q\ra\R; \quad \xi(\cdot,0)=0.
\end{multline}
From (\ref{L5}) we see that $\xi(x,t),  \ x\in Q,t>0,$ is the solution to the initial value problem for the parabolic PDE 
\begin{eqnarray} \label{M5}
\frac{\pa\xi(x,t)}{\pa t}  \ &=&  \ -\frac{1}{2}\{\na^*V''(\na\phi(x,t))\na\xi(x,t)+m^2\xi(x,t)\}+\psi_0(x,t), \\
 \xi(x,0) \ &=& \ 0. \nonumber
\end{eqnarray}
Consider now the terminal value problem for the backwards in time parabolic PDE
\begin{eqnarray} \label{N5}
\frac{\pa u(x,t)}{\pa t}  \ &=&  \ \frac{1}{2}\na^*V''(\na\phi(x,t))\na u(x,t), \   t<T, \\
 u(x,T) \ &=& \ u_0(x), \nonumber
\end{eqnarray}
with solution
\be \label{O5}
u(x,t) \ = \ \sum_{y\in Q} G(x,y,t,T,\phi(\cdot,\cdot)) u_0(y) \ , \quad t<T.
\ee
Then the solution to (\ref{M5}) is given by the formula
\be \label{P5}
\xi(y,T)= \int_0^Te^{-m^2(T-t)/2} dt\sum_{x\in Q} G(x,y,t,T,\phi(\cdot,\cdot)) \psi_0(x,t) \ , \quad y\in Q. 
\ee
We conclude from (\ref{P5}) that
\be \label{Q5}
(f(\cdot),D_{\rm Mal}\phi(x,t;\cdot,T)) \ = \ e^{-m^2(T-t)/2}  (f(\cdot),G(x,\cdot,t,T,\phi(\cdot,\cdot)))  , \quad  x\in Q, t<T,  \  f:Q\ra\R.
\ee

Suppose now that $F:\R^{L^d}\ra\R$ is a $C^1$ function such that $|DF(z)|\le C\exp[A|z|], \ z\in\R^{L^d}$, for some constants $C,A$. Then from (\ref{Q5}) it follows that
\begin{multline} \label{R5}
D_{\rm Mal}F(x,t; \phi(\cdot,T)) \ = \ e^{-m^2(T-t)/2}  (dF(\cdot,\phi(\cdot,T)),G(x,\cdot,t,T,\phi(\cdot,\cdot)))  , \quad  x\in Q, t<T,   \\
D_{\rm Mal}F(x,t; \phi(\cdot,T)) \ = \  0, \quad  x\in Q, t>T.
\end{multline}
Next we observe from (\ref{N5}), (\ref{R5}) that the conditional expectation (\ref{J5}) is given by the formula
\be \label{S5}
\langle \ D_{\rm Mal}F(\cdot,t; \phi(\cdot,T)) \ | \  \mathcal{F}_t \ \rangle_{\Om_{Q,\rm Mal}}  \ = \  e^{-H(T-t)/2} dF(\cdot,\phi(\cdot,t)) \ , \quad t<T,
\ee
where the operator $H$ is as in (\ref{G5}), so  $H \ = \ d^*d+\nabla^*V''(\nabla\phi(\cdot))\nabla+m^2$. Since for any fixed $s\ge 0$ the distribution of $\phi(\cdot,T-s)$ converges as $T\ra\infty$ to the distribution of $\phi(\cdot)$ for the invariant measure (\ref{A5}), it follows that
\be \label{T5}
\lim_{T\ra\infty}\langle \ \big| \langle \ D_{\rm Mal}F(\cdot,T-s; \phi(\cdot,T)) \ | \  \mathcal{F}_{T-s} \ \rangle_{\Om_{Q,\rm Mal}}\big|^2 \ \rangle_{\Om_{Q, \rm Mal}} \ = \ 
\langle \ dF(\cdot,\phi(\cdot)) \  e^{-Hs}dF(\cdot,\phi(\cdot)) \ \rangle_{\Om_Q} \ .
\ee
 Now (\ref{G5}) for $F_1=F_2$ follows from (\ref{I5}), (\ref{T5}) on letting $T\ra\infty$. The identity (\ref{G5}) for general $F_1,F_2$ is then a consequence of the symmetry of the LHS of (\ref{G5}) in $F_1,F_2$. 
 \begin{proposition}
Let $(\Om,\mathcal{F},P)$ be the  massive field theory probability space  defined by Proposition 6.1. Then the  operators $\tau_{\mathbf{e}_j,0}, \ 1\le j\le d,$ on $\Om$ are strong mixing. 
 \end{proposition}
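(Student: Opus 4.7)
The plan is to prove strong mixing by establishing
\[\mathrm{cov}(F_1,F_2\circ\tau_{n\mathbf{e}_j,0})\longrightarrow 0,\quad n\ra\infty,\]
for a dense class of $F_1,F_2\in L^2(\Omega)$, and then extending by a standard density argument. I would take $F_i$ to be bounded $C^1$ cylindrical functionals depending on finitely many field values $\phi(y_\ell^{(i)},s_\ell^{(i)})$, work first in the finite volume space $(\Omega_Q,\mathcal{F}_Q,P_Q)$ with $Q=Q_L$ and $L$ large compared to $n$, and use Proposition 6.1 to pass to $|Q|\ra\infty$ before sending $n\ra\infty$.

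The main device is the Clark-Ocone representation combined with the parabolic Green's function calculation in (\ref{M5})--(\ref{Q5}). For cylindrical $F$ the Malliavin derivative $D_{\rm Mal}F(x,t;\phi)$ equals a sum of terms proportional to $e^{-m^2(s_\ell-t)/2}G(x,y_\ell,t,s_\ell,\phi)\,\mathbf{1}_{t<s_\ell}$, where $G$ is the random parabolic Green's function of (\ref{N5}). Since the drift in (\ref{C5}) is translation invariant, the shift $\tau_{n\mathbf{e}_j,0}$ on $\Omega$ is conjugate to a spatial translation of the driving white noise by $n\mathbf{e}_j$. Combining this with the It\^o isometry (\ref{I5}) yields a representation
\[\mathrm{cov}(F_1,F_2\circ\tau_{n\mathbf{e}_j,0})=\int_{-\infty}^\infty dt\sum_{z\in Q}\langle G_1(z,t;\phi)\,\tilde G_2(z,t;\phi)\rangle_{P_Q},\]
where $G_i$ is the Clark-Ocone integrand of $F_i$ and $\tilde G_2$ is that of $F_2\circ\tau_{n\mathbf{e}_j,0}$, which by translation equivariance is concentrated in $z$ near $n\mathbf{e}_j+y_\ell^{(2)}$ rather than near $y_\ell^{(2)}$.

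To estimate each factor I would apply Cauchy--Schwarz, using (\ref{F5}) or Brascamp-Lieb (\ref{B5}) to control the $L^2$ norm of $\partial_\ell f_i$ evaluated at the field, together with the Nash-Aronson bound for the uniformly elliptic parabolic operator $\pa_t-\na^*V''(\na\phi)\na$, which by (\ref{A1}) yields a pointwise estimate
\[|G(x,y,t,s,\phi)|\le \frac{C_d}{(s-t+1)^{d/2}}\exp\left[-c|x-y|^2/(s-t)\right]\]
with constants depending only on $d,\la,\La$ and hence uniform in the realization of $\phi$ and in $Q$. The mass factor $e^{-m^2(s-t)/2}$ gives integrability in $t$ and forces $\|G_i(x,t;\phi)\|_{L^2(P_Q)}$ to have Gaussian-times-exponential tails in $x-y_\ell^{(i)}$. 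Plugging these bounds into the covariance representation, the localization regions of $G_1$ (near $y_\ell^{(1)}$) and $\tilde G_2$ (near $n\mathbf{e}_j+y_\ell^{(2)}$) are separated by distance of order $n$, and the double summation/integration yields a bound of the form $C(F_1,F_2)\exp[-cn]$, proving strong mixing.

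The main obstacle is the rigorous multi-time Clark-Ocone representation on the two-sided stationary measure, since the excerpt writes out only functionals of $\phi(\cdot,T)$ at a single positive $T$; one must assemble the multi-time version by linearity and chain rule in the time slices, extend to $t\in\R$ by stationarity, and verify that all estimates are uniform in $Q$ so that the finite volume result passes to the infinite volume limit via Proposition 6.1. Essentially the same argument, with the shift in the Brownian noise being different, would also handle the continuous time group $\tau_{0,t}$ if one wanted mixing in the time direction.
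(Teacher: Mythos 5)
Your outline is workable, but it is a genuinely different route from the paper's, and one technical claim needs repair. The paper also reduces to cylinder functions, works in the finite-volume Malliavin space, and uses the Clark--Ocone formula (\ref{I5}), (\ref{J5}) together with the first-variation kernel $G(x,y,t,T,\phi(\cdot,\cdot))$ of (\ref{Q5}); but it never estimates the covariance of a single pair $F_1,\ F_2\circ\tau_{n\mathbf{e}_1,0}$ directly. Instead it defines $h_{Q,T}(n)$ to be that covariance, passes to Fourier space in the shift variable $n$ by Plancherel (\ref{X5}), bounds $|\hat h_{Q,T}(\zeta)|^2$ by a product of variances (\ref{Z5}), and controls each variance of the exponentially weighted sum $a(f,\zeta,\phi(\cdot,\cdot))$ using only the stochasticity of the kernel, i.e.\ the row and column sums (\ref{AC5}), to get ${\rm var}\le k^2L\|Df\|_\infty^2/m^2$ as in (\ref{AD5}); this gives $\sum_{n}|h_{Q,T}(n)|^2\le C$ uniformly in $L,T$ (\ref{W5}), and strong mixing follows because the limiting correlation sequence is square summable, hence tends to zero. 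The benefit of that route is that no pointwise bound on the random, time-dependent kernel is needed, at the price of yielding no rate. Your route gives more (an exponential rate for cylinder functionals, via the covariance identity obtained by polarizing (\ref{I5}) and the localization of the two Clark--Ocone integrands) but needs more: a Nash--Aronson bound for the discrete, time-inhomogeneous operator $\pa_t-\na^*V''(\na\phi)\na$, uniform in the realization. Such bounds do hold with constants depending only on $d,\la,\La$, but the Gaussian form you wrote is false on the lattice: for $|x-y|$ large compared with $s-t$ the kernel decays only like the Poisson-type bound, so you must use $\exp\left[-c\min\left\{|x-y|,\ |x-y|^2/(s-t)\right\}\right]$ as in (\ref{I2}). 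This weaker bound, combined with the mass factor $e^{-m^2(s-t)/2}$, still produces the $e^{-cn}$ decay after optimizing over the time gap, so your conclusion survives; and the multi-time chain rule and uniformity in $Q$ that you flag as the main obstacle are exactly what the paper carries out in (\ref{AA5})--(\ref{AB5}) and through Proposition 6.1, so they are not genuine obstructions.
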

 \begin{proof}
 We proceed as in the proof of Proposition 5.2 of \cite{cs}. It will be sufficient  to prove that for
  $k\ge 1$ and $(x_j,t_j)\in\Z^d\times\R, \ j=1,...,k $,
\begin{multline} \label{U5}
\lim_{n\ra\infty} \langle \ f(\phi(x_1+n{\bf e}_1,t_1),....,\phi(x_k+n{\bf e}_1,t_k)) \  g(\phi(x_1,t_1),....,\phi(x_k,t_k)) \ \rangle \ =  \\
\langle \  f(\phi(x_1,t_1),....,\phi(x_k,t_k)) \ \rangle \  \langle \  g(\phi(x_1,t_1),....,\phi(x_k,t_k)) \ \rangle 
\end{multline}
for all $C^\infty$ functions $f,g:\R^k\ra\R$ with compact support.  Let $Q\subset\Z^d$ be a large cube centered at the origin with side of length an even integer $L$. We define $h_{Q,T}(n)$ for $n\in\Z$ and $T>0$ large by
\begin{multline} \label{V5}
h_{Q,T}(n) \ = \ \langle \ f(\phi(x_1+n{\bf e}_1,t_1+T),....,\phi(x_k+n{\bf e}_1,t_k+T)) \  g(\phi(x_1,t_1+T),....,\phi(x_k,t_k+T)) \ \rangle_{\Om_Q,{\rm Mal}} \ -  \\
\langle \  f(\phi(x_1,t_1+T),....,\phi(x_k,t_k+T)) \ \rangle_{\Om_Q,{\rm Mal}} \  \langle \  g(\phi(x_1,t_1+T),....,\phi(x_k,t_k+T)) \ \rangle_{\Om_Q,{\rm Mal}} \ . 
\end{multline}
The function $h_{Q,T}:\Z\ra\R$ is periodic on the interval $I_L=\Z\cap[-L/2,L/2]$.  We shall show that there is a constant $C$ independent of $L,T$ as $L,T\ra\infty$ such that
\be \label{W5}
\sum_{n\in I_L} |h_{Q,T}(n)|^2 \ \le \ C. 
\ee
 Then (\ref{U5}) follows from (\ref{W5}) and Proposition 6.1 as in Proposition 5.2 .of \cite{cs}. 
 
 To estimate the LHS of (\ref{W5}) we go into Fourier variables, using the Plancherel theorem
 \be \label{X5}
 \sum_{n\in I_L} |h_{Q,T}(n)|^2  \ = \ \frac{1}{2\pi}\int_{\hat{I}_L}|\hat{h}_{Q,T}(\zeta)|^2 \ d\zeta \  .
 \ee
 Let $a(f,\zeta,\phi(\cdot,\cdot))$ be the function
 \be \label{Y5}
 a(f,\zeta,\phi(\cdot,\cdot)) \ = \ \sum_{n\in I_L} f(\phi(x_1+n{\bf e}_1,t_1+T),....,\phi(x_k+n{\bf e}_1,t_k+T))
  \ e^{in\zeta} \ .
 \ee
 Then the Fourier transform of $h_{Q,T}(\cdot)$ is bounded by
 \be \label{Z5}
 |\hat{h}_{Q,T}(\zeta)|^2 \ \le \ \frac{1}{L^2} \ {\rm var}_{\Om_{Q,{\rm Mal}}}[ a(f,\zeta,\phi(\cdot,\cdot))] \ 
  {\rm var}_{\Om_{Q,{\rm Mal}}}[ a(g,\zeta,\phi(\cdot,\cdot))] \ .
 \ee
 From (\ref{Q5}) we see that
 \be \label{AA5}
 |D_{{\rm Mal}} a(x,t; f,\zeta,\phi(\cdot,\cdot)) | \ \le \  \|Df(\cdot\|_\infty \sum_{j=1}^k \sum_{n\in I_L}
 e^{-m^2(T+t_j-t)/2}  G(x,x_j+n\mathbf{e}_1,t,T+t_j,\phi(\cdot,\cdot)) \ ,
 \ee
 where we are using the convention $G(\cdot,\cdot,s,S)=0$ if $s>S$. It follows from (\ref{AA5}) that
 \begin{multline} \label{AB5}
 \sum_{x\in Q}  |D_{{\rm Mal}} a(x,t; f,\zeta,\phi(\cdot,\cdot)) |^2 \ \le  \\
 kL\|Df(\cdot)\|_\infty^2 \sum_{j=1}^k 
 e^{-m^2(T+t_j-t)} \sup_{y\in Q} \sum_{x\in Q} \sum_{n\in I_L}G(x,y+n\mathbf{e}_1,t,T+t_j,\phi(\cdot,\cdot))G(x,y,t,T+t_j,\phi(\cdot,\cdot)) \ .
 \end{multline}
 Observe now that
 \be \label{AC5}
  \sum_{y'\in Q} G(x,y',t,T,\phi(\cdot,\cdot)) =   \sum_{x'\in Q} G(x',y,t,T,\phi(\cdot,\cdot))=1  ,  \quad x,y\in Q, t<T. 
 \ee 
 We conclude from (\ref{I5}), (\ref{AB5}), (\ref{AC5}) that 
 \be \label{AD5}
 {\rm var}_{\Om_{Q,{\rm Mal}}}[ a(f,\zeta,\phi(\cdot,\cdot))]  \ \le \  k^2L\|Df(\cdot)\|_\infty^2/m^2 \ .
 \ee
 The inequality (\ref{W5}) follows from (\ref{X5}), (\ref{Z5}), (\ref{AD5}).
 \end{proof}
 
 To proceed further we need to obtain a more general Poincar\'{e} inequality than was used in Proposition 6.2. In order to do this we consider functions $F(\phi(\cdot,\cdot))$ of continuous in time fields $\phi:Q\times\R\ra\R$. For $h\in L^2(Q\times\R)$, which is continuous in time, we define the {\it directional derivative} of $F(\phi(\cdot,\cdot))$ in direction $h$ by
 \be \label{AE5}
 dF_h(\phi(\cdot,\cdot)) \ = \ \lim_{\ve\ra 0} [F(\phi(\cdot,\cdot)+\ve h(\cdot,\cdot))-F(\phi(\cdot,\cdot))]/\ve \ .
 \ee
  For the functions $F(\phi(\cdot,\cdot))$ we shall be interested in, the directional derivative (\ref{AE5})  can be written as
  \be \label{AF5}
   dF_h(\phi(\cdot,\cdot)) \ = \ \sum_{x\in Q} \int_{-\infty}^\infty dt \ dF(x,t;\phi(\cdot,\cdot)) h(x,t) =[dF(\phi(\cdot,\cdot)), h] \ .
  \ee
  We shall call $dF(\cdot,\cdot;\phi(\cdot,\cdot))$ the {\it field derivative} of $F(\phi(\cdot,\cdot))$.
  One can use the HS formula (\ref{G5}) to obtain a Poincar\'{e} inequality for functions $F(\phi(\cdot,\cdot))$ of the form
  \be \label{AG5}
  F(\phi(\cdot,\cdot)) \ = \ \int_{-\infty}^\infty g(t)G(\phi(\cdot,t)) \ dt \ ,
  \ee
  where $g:\R\ra\C$ is a continuous function of compact support and $G(\phi(\cdot))$ is a complex valued $C^1$ function of fields $\phi:Q\ra\R$ which satisfies $|G(z)|+|DG(z)|\le A\exp[B|z|),  \ z\in\R^{L^d},$ for some constants $A,B$ .  Evidently from (\ref{AF5}) we see that the field derivative of the function (\ref{AG5}) is given by the formula
 \be \label{AH5}
 dF(x,t;\phi(\cdot,\cdot)) \ = \ g(t)dG(x,\phi(\cdot,t)), \quad x\in Q,t\in\R.
 \ee 
 
  Let us define now the correlation function $h:\R\ra\C$ by
  \be \label{AI5}
  h(t) \ = \ \langle \ \overline{G(\phi(\cdot,t))} \  G(\phi(\cdot,0)) \ \rangle_{\Om_Q}-
  \langle \ \overline{G(\phi(\cdot,t))} \ \rangle_{\Om_Q} \ \langle \  G(\phi(\cdot,0)) \ \rangle_{\Om_Q} \ .
  \ee
  Then the variance of $F(\phi(\cdot,\cdot))$ is given in terms of the Fourier transforms of $g(\cdot)$ and  $h(\cdot)$ by
  \be \label{AJ5}
  {\rm var}_{\Om_Q}[ F(\phi(\cdot,\cdot))] \ = \ \frac{1}{2\pi}\int_{-\infty}^\infty |\hat{g}(\zeta)|^2\hat{h}(\zeta)  \ d\zeta \ .
  \ee
 Note that the function $\hat{h}(\cdot)$ is real and non-negative. 
  Observe next that $h(t)$ can be written as an expectation with respect to the measure (\ref{A5}) by using the operator $d^*d$ which occurs in (\ref{G5}). Thus we have that
 \be \label{AK5}
 h(t) \ = \ \langle \ e^{-d^*dt/2}[\bar{G}(\phi(\cdot,0))-\langle\bar{G}(\phi(\cdot,0)\rangle_{\Om_Q}] \  [G(\phi(\cdot,0))-\langle G(\phi(\cdot,0)\rangle_{\Om_Q}] \ \rangle_{\Om_Q} \ ,   \quad t>0,
 \ee 
with a similar formula for $t<0$. For $\zeta\in\R$ let $u(\zeta,\phi(\cdot))$ be the solution to the elliptic PDE
\be \label{AL5}
\left[d^*d/2+i\zeta\right] u(\zeta,\phi(\cdot)) \ = \  [G(\phi(\cdot))-\langle G(\phi(\cdot)\rangle_{\Om_Q}] \ , \quad \phi:Q\ra\R.
\ee
 We conclude from (\ref{AK5}), (\ref{AL5})  that
\be \label{AM5}
\hat{h}(\zeta) \ = \ \langle \ [\bar{G}(\phi(\cdot,0))-\langle\bar{G}(\phi(\cdot,0)\rangle_{\Om_Q}]  \ 
[ u(\zeta,\phi(\cdot,0))+ u(-\zeta,\phi(\cdot,0))] \ \rangle_{\Om_Q} \ .
\ee
 If we apply the gradient operator $d$ to (\ref{AL5}) we obtain the equation
 \be \label{AN5}
\left[d^*d+2i\zeta+\nabla^*V''(\nabla\phi(\cdot))\nabla+m^2\right] du(\cdot,\zeta,\phi(\cdot)) \ = \  2dG(\cdot,\phi(\cdot)) \ ,\quad \phi:Q\ra\R.
 \ee
Hence (\ref{AM5}), (\ref{AN5}) and the HS formula (\ref{G5}) imply that
\begin{multline} \label{AO5}
\hat{h}(\zeta) \ =  \  4\times \ {\rm real \ part \ of} \\
\langle \ d\bar{G}(\cdot,\phi(\cdot,0))\left[d^*d+\nabla^*V''(\nabla\phi(\cdot))\nabla+m^2\right]^{-1} \ \left[d^*d+2i\zeta+\nabla^*V''(\nabla\phi(\cdot))\nabla+m^2\right]^{-1} 
 dG(\cdot,\phi(\cdot,0)) \ \rangle_{\Om_Q} \ .
\end{multline}
Just as (\ref{F5}) follows from (\ref{G5}),  we see from (\ref{AO5}) that 
\be \label{AP5}
0 \ \le \  \hat{h}(\zeta) \ \le \  \frac{4}{m^4} \langle \|dG(\phi(\cdot,0))\|^2\rangle_{\Om_Q} \ .
\ee

It follows from (\ref{AJ5}), (\ref{AP5}) that
\be \label{AQ5}
  {\rm var}_{\Om_Q}[ F(\phi(\cdot,\cdot))] \ \le \  \frac{4}{m^4} \langle \|dG(\cdot,\phi(\cdot,0))\|^2\rangle_{\Om_Q} \ \int_{-\infty}^\infty |g(t)|^2 \ dt .
\ee
Since from (\ref{AH5}) the inequality (\ref{AQ5}) can be rewritten as
\be \label{AR5}
  {\rm var}_{\Om_Q}[ F(\phi(\cdot,\cdot))] \ \le \  \frac{4}{m^4} \langle \|dF(\cdot,\cdot;\phi(\cdot,\cdot))\|^2\rangle_{\Om_Q} \ ,
\ee
we have obtained a Poincar\'{e} inequality for functions $F(\phi(\cdot,\cdot))$ of time dependent fields which are of the form (\ref{AG5}). We generalize this as follows:
\begin{lem}
Let $F(\phi(\cdot,\cdot))$ be a bounded function of continuous in time fields $\phi:Q\times\R\ra\R$ which is $C^1$ with respect to the $L^2(Q\times\R)$ metric, and assume that the field derivative function  $dF(\cdot,\cdot,\phi(\cdot,\cdot))$ with range $L^2(Q\times\R)$ is also  bounded. Then the inequality (\ref{AR5}) holds.
\end{lem}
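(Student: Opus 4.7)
The plan is to derive the Poincar\'{e} inequality (\ref{AR5}) from the Clark-Ocone formula (\ref{I5})--(\ref{J5}) combined with an extension of the Malliavin-derivative identity (\ref{Q5}) to functionals of the entire time-dependent field. The key observation is that in the special case (\ref{AG5}) all the explicit use of the Fourier representation (\ref{AJ5})--(\ref{AO5}) can be replaced by the deterministic bookkeeping of the stochastic Green's function $G(\cdot,\cdot,\cdot,\cdot,\phi(\cdot,\cdot))$ of the backward PDE (\ref{N5}), together with the mass-preservation identity (\ref{AC5}). This makes the argument insensitive to the particular form of $F$.

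First I would generalize the derivation of (\ref{Q5}) to obtain the representation
\be \label{new1}
D_{\rm Mal}F(x,t;\phi(\cdot,\cdot)) \ = \ \sum_{y\in Q}\int_{t}^{\infty} ds\ e^{-m^2(s-t)/2}G(x,y,t,s,\phi(\cdot,\cdot))\,dF(y,s;\phi(\cdot,\cdot))\ ,
\ee
where we use the convention $G(x,y,t,s,\phi)=0$ for $s<t$.  The derivation is by chain rule: a perturbation of $\phi(\cdot,\cdot)$ by $\varepsilon h(\cdot,\cdot)\in L^2(Q\times\R)$ propagates forward in time through the first variation equation (\ref{M5}) with source $\psi_0=h$, so its effect on $\phi(\cdot,s)$ is given by (\ref{P5}) applied to each source time; pairing with $dF(\cdot,s;\phi(\cdot,\cdot))$ and integrating over $s$ gives (\ref{new1}) via (\ref{H5}).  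For $F$ of the cylindrical form (\ref{AG5}) this reduces to (\ref{R5}); for the general $F$ in the lemma I would first verify (\ref{new1}) on cylindrical functionals $F(\phi(\cdot,t_1),\ldots,\phi(\cdot,t_k))$ by summing (\ref{Q5}), and then extend by density using the standing $L^\infty$ bounds on $F$ and $dF$ (e.g.\ via time-mollification of $\phi(\cdot,\cdot)$).

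Second, I apply Clark-Ocone (\ref{I5})--(\ref{J5}) together with the conditional Jensen inequality $|G(x,t;\phi)|^2\le \langle\,|D_{\rm Mal}F(x,t;\phi)|^2\,|\,\mathcal{F}_t\,\rangle_{\Om_{Q,\rm Mal}}$ to reduce (\ref{AR5}) to the deterministic pointwise bound
\be \label{new2}
\sum_{x\in Q}\int_{-\infty}^{\infty}dt\,|D_{\rm Mal}F(x,t;\phi(\cdot,\cdot))|^2 \ \le\ \frac{4}{m^4}\,\|dF(\cdot,\cdot;\phi(\cdot,\cdot))\|^2\ .
\ee
To prove (\ref{new2}), apply Cauchy-Schwarz to (\ref{new1}) in the variables $(y,s)$ with weight $e^{-m^2(s-t)/2}G(x,y,t,s,\phi)$; the identity $\sum_y G(x,y,t,s,\phi)=1$ from (\ref{AC5}) evaluates the first factor to $2/m^2$, giving
\be \label{new3}
|D_{\rm Mal}F(x,t;\phi)|^2 \ \le\ \frac{2}{m^2}\sum_{y\in Q}\int_{t}^{\infty}ds\,e^{-m^2(s-t)/2}G(x,y,t,s,\phi)|dF(y,s;\phi)|^2\ .
\ee
Summing (\ref{new3}) over $x$, integrating over $t$, interchanging the order of integration to push the $t$-integral inside, and then using the companion identity $\sum_x G(x,y,t,s,\phi)=1$ from (\ref{AC5}), the remaining $t$-integral contributes a second factor $2/m^2$, yielding (\ref{new2}).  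Taking expectations and combining with Clark-Ocone proves (\ref{AR5}).

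The main obstacle is the first step: justifying (\ref{new1}) for a generic bounded $F$ with only $L^2$-level field derivative.  For cylindrical $F$ the formula is immediate from (\ref{Q5}); the subtle point is the extension, since arbitrary $L^2(Q\times\R)$ perturbations of $\phi(\cdot,\cdot)$ are rougher in time than the sample paths on which $F$ is originally defined.  I would handle this by approximating $F$ via $F_\kappa(\phi(\cdot,\cdot))=F(\phi_\kappa(\cdot,\cdot))$, where $\phi_\kappa$ is a time-mollification of $\phi$; for $F_\kappa$ the representation (\ref{new1}) holds with a mollified field derivative, and then (\ref{AR5}) for $F_\kappa$ passes to the limit $\kappa\to 0$ by dominated convergence using the $L^\infty$ bound on $dF$.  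The mass-preservation identity (\ref{AC5}) is the analytic heart of the estimate: it is precisely what converts the $L^\infty$ stochasticity of $G(x,y,t,s,\phi)$ into an $L^1$--$L^\infty$ bound compatible with Cauchy-Schwarz.
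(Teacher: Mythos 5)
Your computational core coincides with the paper's own proof: the chain rule combined with the first-variation/Green's-function representation (\ref{Q5}) of the Malliavin derivative, the double use of the stochasticity identity (\ref{AC5}), the exponential mass factors producing the constant $4/m^4$, and conditional Jensen applied to the Clark--Ocone formula (\ref{I5}), (\ref{J5}); your (new2)--(new3) are a repackaging of the paper's (\ref{AU5})--(\ref{AX5}) and give the same constant. The genuine gap is in your final step ``taking expectations and combining with Clark--Ocone proves (\ref{AR5})''. The Clark--Ocone identity (\ref{I5}), (\ref{J5}) lives on the Wiener space $(\Om_{Q,{\rm Mal}},\mathcal{F}_{Q,{\rm Mal}},P_{Q,{\rm Mal}})$: it computes the variance of functionals of the driving white noise $W(x,t)$, $t>0$, of (\ref{C5}). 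A functional $F(\phi(\cdot,\cdot))$ of the stationary two-sided-in-time field under $P_Q$ is not such a functional: the field at times $\le 0$ (equivalently, the time-zero marginal distributed according to (\ref{A5})) carries randomness invisible to the Malliavin derivative, so your argument controls only the conditional variance given that past, and the variance of the conditional expectation is left unbounded. Your representation (new1), written with $D_{\rm Mal}F(x,t;\cdot)$ for all $t$ and an unrestricted time integral, implicitly treats the whole field as generated by the forward noise, which it is not.

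The paper closes exactly this gap with the time-shift device: it applies the whole argument to $F(\tau_{0,T}\phi(\cdot,\cdot))$, where $\phi$ solves (\ref{C5}) for $t>0$ (so the quantity is genuinely a Wiener functional, and (\ref{I5}), (\ref{J5}), (\ref{Q5}) apply verbatim, yielding (\ref{AY5})), and then lets $T\ra\infty$, using convergence of the law of $\tau_{0,T}\phi$ to the stationary measure so that both sides of (\ref{AY5}) converge to those of (\ref{AR5}). You need either this device, or some equivalent two-sided martingale representation, before your last sentence is legitimate; once it is inserted, the rest of your estimate goes through as written. Your mollification/density discussion addresses a separate and more minor point (validity of the chain rule for general $F$ with only an $L^2$-level field derivative), which the paper passes over without comment, so it is a reasonable supplement but not a substitute for the missing limiting argument.
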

 \begin{proof}
 Let $T>0$ be large and consider $F(\tau_{0,T}\phi(\cdot,\cdot))$ as a function of solutions $\phi(x,t), \ x\in Q,t>0,$ to the stochastic equation (\ref{C5}).   By the chain rule we have that
 \begin{multline} \label{AS5}
 D_{\rm Mal}F(x,t;\tau_{0,T}\phi(\cdot,\cdot))  \ = \\
  \sum_{y\in Q} \int_{t-T}^\infty ds \ dF(y,s;\tau_{0,T}\phi(\cdot,\cdot)) D_{\rm Mal}\phi(x,t;y,T+s) \  , \quad x\in Q,t>0.
 \end{multline}
 It follows then from (\ref{Q5}) that
  \begin{multline} \label{AT5}
 D_{\rm Mal}F(x,t;\tau_{0,T}\phi(\cdot,\cdot))  \ = \\
  \sum_{y\in Q} \int_{t-T}^\infty ds \ dF(y,s;\tau_{0,T}\phi(\cdot,\cdot))  \ e^{-m^2(T+s-t)/2} G(x,y,t,T+s,\phi(\cdot,\cdot)) \  , \quad x\in Q,t>0.
 \end{multline}
 Hence we have that
 \begin{multline} \label{AU5}
 \sum_{x\in Q}\int_0^\infty dt  \ |D_{\rm Mal}F(x,t;\tau_{0,T}\phi(\cdot,\cdot))|^2 \ = \\
 2 \sum_{x\in Q}\int_{0<t<T+s<T+s'} dt \  ds \ ds' \ e^{-m^2(T+s-t)/2}e^{-m^2(T+s'-t)/2} h(x,s)\overline{h(x,s')} \  ,
 \end{multline}
 where
 \be \label{AV5}
 h(x,s) \ = \ \sum_{y\in Q} G(x,y,t,T+s,\phi(\cdot,\cdot))  \  dF(y,s;\tau_{0,T}\phi(\cdot,\cdot))  \ .
 \ee
 It follows from (\ref{AC5}) that
 \be \label{AW5}
 \sum_{x\in Q} |h(x,s)|^2 \ \le \ \sum_{y\in Q} |dF(y,s;\tau_{0,T}\phi(\cdot,\cdot)|^2 \ ,
 \ee
 and so we conclude from (\ref{AU5}) that
 \be \label{AX5}
  \sum_{x\in Q}\int_0^\infty dt  \ |D_{\rm Mal}F(x,t;\tau_{0,T}\phi(\cdot,\cdot))|^2 \ \le \ 
  \frac{4}{m^4} \sum_{y\in Q}\int_{-\infty}^\infty ds  \ |dF(y,s;\tau_{0,T}\phi(\cdot,\cdot))|^2 \ .
 \ee
 Hence (\ref{I5}), (\ref{J5}) imply that
 \be \label{AY5}
  {\rm var}_{\Om_Q,{\rm Mal}}[ F(\tau_{0,T}\phi(\cdot,\cdot))] \ \le \  \frac{4}{m^4} \langle \|dF(\cdot,\cdot;\tau_{0,T}\phi(\cdot,\cdot))\|^2\rangle_{\Om_Q,{\rm Mal}} \ .
\ee 
The result follows now by observing that the limit of the LHS of (\ref{AY5}) as $T\ra\infty$ is equal to the LHS of (\ref{AR5}). Similarly the RHS of (\ref{AY5}) converges to the RHS of (\ref{AR5}). 
 \end{proof}
 We shall show how the Poincar\'{e} inequality (\ref{AR5}) can be used to improve the most elementary of the inequalities contained in $\S 2$. Thus let us consider an equation which differs from (\ref{AL2}) only  in that the projection operator $P$ has been omitted, 
\be \label{AZ5}
\eta\Phi(\xi,\eta,\om)+\pa\Phi(\xi,\eta,\om)+\pa_\xi^*{\bf a}(\om)\pa_\xi\Phi(\xi,\eta,\om)=-\pa^*_\xi {\bf a}(\om), \quad \eta>0, \ \xi\in \R^d, \ \om\in\Om.
\ee
 For any $v\in\C^d$ we multiply the row vector (\ref{AZ5}) on the right by the column vector $v$ and by the function $\overline{\Phi(\xi,\eta,\om)v}$ on the left.  Taking the expectation we see  that 
\be \label{BA5}
\|P\pa_\xi\Phi(\xi,\eta,\cdot)v\| \ \le \  \|\pa_\xi\Phi(\xi,\eta,\cdot)v\|  \ \le \  \frac{\La|v|}{\la}  \ .
\ee
where $\|\cdot\|$ denotes the norm in $\mathcal{H}(\Om)$. Let  $g:\Z^d\times\R\ra\C^d\otimes\C^d$ be in $L^p(\Z^d\times\R,\C^d\otimes\C^d)$ with norm given  by (\ref{H6}). 
 If $p=1$ then (\ref{BA5}) implies that
\be \label{BC5}
\|P\sum_{x\in\Z^d}\int_{-\infty}^\infty dt \  g(x,t)\pa_\xi\Phi(\xi,\eta,\tau_{x,-t}\cdot)v\|   \ \le \ \frac{\La|v|}{\la} \|g\|_1 \ .
\ee
The Poincar\'{e} inequality (\ref{AR5}) enables us to improve (\ref{BC5}) to allow $g\in L^p(\Z^d\times\R,\C^d\otimes\C^d)$ for some $p>1$.
\begin{proposition}
Suppose ${\bf a}(\cdot)$ in (\ref{AZ5}) is as in the statement of Theorem 1.2.  Then for $\xi\in\R^d, \ \Re\eta>0,$ there exists $p_0(\La/\la)$ depending only on $d$ and $\La/\la$ and satisfying $1<p_0(\La/\la)< 2$, such that for $g\in L^p(\Z^d\times\R,\C^d\otimes\C^d)$ with $1\le p\le p_0(\La/\la)$ and $v\in\C^d$,
\be \label{BD5}
\|P\sum_{x\in\Z^d} \int_{-\infty}^\infty dt \   g(x,t)\pa_\xi\Phi(\xi,\eta,\tau_{x,-t}\cdot)v\|   \ \le \ 
 \frac{C\La_1|v|}{m^2\La^{3/2-1/p}} \|g\|_p \ ,
\ee
where $\La_1$ is the constant in Theorem 1.2 and  $C$ depends only on $d$ and $\La/\la$. 
\end{proposition}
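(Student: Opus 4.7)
The strategy is to apply the Poincar\'{e} inequality of Lemma~6.1 componentwise to the $\C^d$-valued functional
\begin{equation*}
X(\phi) := P\sum_{x\in\Z^d}\int_{-\infty}^\infty g(x,t)\,\pa_\xi\Phi(\xi,\eta,\tau_{x,-t}\phi)v\,dt \;\in\;\mathcal{H}(\Om),
\end{equation*}
whose $\mathcal{H}(\Om)$-norm is the left-hand side of (\ref{BD5}), and to interpolate the resulting $p=2$ bound with the trivial $L^1$ estimate (\ref{BC5}) via the Riesz convexity theorem.

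The core step is computing the field derivative $dX$. Since $\mathbf{a}(\phi)=\tilde{\mathbf{a}}(\phi(0,0))$, perturbing $\phi\to\phi+\epsilon h$ changes $\mathbf{a}(\tau_{y,s}\phi)$ by $\epsilon D\tilde{\mathbf{a}}(\phi(y,s))h(y,s)$; differentiating (\ref{AZ5}) in direction $h$ yields the linearized equation
\begin{equation*}
[\eta+\pa+\pa_\xi^{*}\mathbf{a}\pa_\xi]\,\delta\Phi_h v \;=\; -\pa_\xi^{*}\bigl[(\delta\mathbf{a})_h(v+\pa_\xi\Phi v)\bigr].
\end{equation*}
Writing $\mathbf{a}=\La(I-\mathbf{b})$ and inverting via the Neumann series (\ref{J2}) for the continuous-time $T_{\xi,\eta}$ of (\ref{AN2}) gives
\begin{equation*}
\pa_\xi\delta\Phi_h v \;=\; -\tfrac{1}{\La}\bigl[I - T_{\xi,\eta}\mathbf{b}(\cdot)\bigr]^{-1}T_{\xi,\eta}\bigl[(\delta\mathbf{a})_h(v+\pa_\xi\Phi v)\bigr].
\end{equation*}
A careful chain-rule computation using translation invariance of the $\tau_{x,-t}$ action then delivers the localization identity
\begin{equation*}
dX(y,s;\phi) = g(y,-s)\,P\Xi_v(\tau_{y,s}\phi),\qquad \Xi_v := -\tfrac{1}{\La}\bigl[I - T_{\xi,\eta}\mathbf{b}(\cdot)\bigr]^{-1}T_{\xi,\eta}\bigl[D\tilde{\mathbf{a}}(\phi(0,0))(v+\pa_\xi\Phi v)\bigr],
\end{equation*}
with $\|\Xi_v\|_{\mathcal{H}(\Om)}\le C(\La/\la)^2\La_1|v|/\La$ by standard operator bounds using $\|T_{\xi,\eta}\|\le 1$, $\|\mathbf{b}\|_\infty\le 1-\la/\La$, and $\|v+\pa_\xi\Phi v\|\le 2(\La/\la)|v|$ from (\ref{BA5}).

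Taking expectation and invoking translation invariance of $P$,
\begin{equation*}
\bigl\langle\|dX\|^{2}_{L^{2}(\Z^d\times\R;\C^d)}\bigr\rangle \;=\; \|g\|_{L^2}^{2}\cdot\|P\Xi_v\|_{\mathcal{H}(\Om)}^{2},
\end{equation*}
whence Lemma~6.1 gives
\begin{equation*}
\|X\|_{\mathcal{H}(\Om)} \;\le\; \frac{2}{m^{2}}\bigl\langle\|dX\|^{2}\bigr\rangle^{1/2} \;\le\; \frac{C(\La/\la)^{2}\La_1}{m^{2}\La}\,|v|\,\|g\|_{2},
\end{equation*}
which is (\ref{BD5}) at $p=2$ (absorbing $(\La/\la)^{2}$ into $C$). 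Riesz--Thorin interpolation with (\ref{BC5}) at $p=1$ then extends the bound to $1\le p\le 2$, so any $p_0(\La/\la)\in(1,2)$ is admissible.

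\textbf{Main obstacle.} The technical crux is the localization identity $dX(y,s;\phi) = g(y,-s)\,P\Xi_v(\tau_{y,s}\phi)$. Naively, the field derivative of $\pa_\xi\Phi v$ alone has a Dirac component in the continuous $s$-variable (reflecting that $\mathbf{a}$ depends only on $\phi(0,0)$), so $\pa_\xi\Phi v$ does not satisfy the hypotheses of Lemma~6.1; it is precisely the integration against $g\in L^{2}$ in $X$ which regularizes the derivative to a genuine $L^{2}(\Z^d\times\R;\C^d)$ function. This regularization must be justified via finite-volume approximations, paralleling the derivation of (\ref{AR5}). Matching the exact $\La^{-(3/2-1/p)}$ scaling of (\ref{BD5}) for $1<p<2$ further requires reconciling the $\La,m,\La_1$-dependences of the endpoint bounds, which is a routine but delicate interpolation bookkeeping.
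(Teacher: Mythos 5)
Your reduction to the Poincar\'{e} inequality of Lemma 6.1 is the right starting point, but the localization identity $dX(y,s;\phi)=g(y,-s)\,P\Xi_v(\tau_{y,s}\phi)$ on which your whole argument rests is false, and with it the claimed $p=2$ endpoint. The solution $\Phi(\xi,\eta,\tau_{x,-t}\phi)$ depends on the entire field $\phi$, not only on $\phi(x,-t)$: perturbing $\phi$ at a single point $(y,s)$ perturbs the coefficient $\tilde{\mathbf{a}}(\phi(y,s))$ of the unfolded equation and propagates through the Green's function of the linearized random parabolic operator. This is exactly the content of (\ref{BI5})--(\ref{BJ5}) in the paper: the field derivative of your functional at $(z,-s)$ is the \emph{convolution} $\sum_x\int dt\, g(x,t)e^{i(x-z)\cdot\xi}\na G(x-z,t-s;\tau_{z,-s}\phi)$, where $\na G$ solves the integral equation (\ref{BY5}) with the spread-out source (\ref{BX5}); it is not $g(z,-s)$ times the translate of one fixed element of $\mathcal{H}(\Om)$. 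Your Neumann-series manipulation implicitly treats the direction-$h$ perturbation $D\tilde{\mathbf{a}}(\phi(\cdot,\cdot))h(\cdot,\cdot)$ as the $\tau_{x,-t}$-orbit of a single random variable, which it is not since $h$ is an external function; that is precisely where the kernel incorrectly collapses to a delta.

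This is not a repairable technicality within your scheme: with the correct convolution structure, the Poincar\'{e} inequality (\ref{AR5}) together with Young's inequality gives a bound by $\frac{2}{m^2}\|g\|_p\|\na G\|_q$ with $1/p+1/q=3/2$, so your $p=2$ endpoint would require $\na G\in L^1$, which fails ($\na G$ behaves like the spatial gradient of a heat kernel and lies in $L^q$ only for $q>(d+2)/(d+1)$); this is why the proposition stops at some $p_0(\La/\la)<2$. The actual gain for $p>1$ in the paper comes from proving $\na G\in L^q(\Z^d\times\R\times\Om,\C^d)$ for some $q=q_0(\La/\la)<2$: the energy estimate gives the $q=2$ bound, the parabolic Calderon--Zygmund theorem (Lemma 5.2 and Corollary 5.1 in their continuous-time form) gives $\|T_\eta\|_q\le 1+\del(q)$ with $\del(q)\ra 0$ as $q\ra 2$, so the Neumann series for (\ref{BY5}) converges in $L^q$ once $(1+\del(q))(1-\la/\La)<1$, yielding $\|\na G\|_q\le C\La^{-1/q}\La_1|v|$ and then (\ref{BD5}) with $p=2q/(3q-2)$ via (\ref{BW5}) and Young. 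Two smaller points: even granting your two endpoint estimates, Riesz--Thorin would produce a constant proportional to $\La_1^{2-2/p}m^{-2(2-2/p)}$, not the stated $\La_1 m^{-2}$; and the finite-volume issues you flag are genuine but are handled in the paper by working on $\Om_Q$ and passing to the limit with Proposition 6.1 -- the essential missing ingredient in your proposal is the Calderon--Zygmund step, not the regularization.
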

\begin{proof}
We shall first assume that $g(\cdot,\cdot)$ is continuous in time and has compact support in $\Z^d\times\R$. For a cube $Q$ such that $Q\times\R$ contains the support of $g(\cdot,\cdot)$, let $\Phi_Q(\xi,\eta,\cdot)$ be the solution to (\ref{AZ5}) with $\mathbf{a}(\phi)=\tilde{\mathbf{a}}(\phi(0,0)), \ \phi\in\Om_Q,$ so the random environment for (\ref{AZ5}) is $(\Om_Q,\mathcal{F}_Q,P_Q)$.  The inequality (\ref{AR5}) implies that
\begin{multline}  \label{BE5}
\|P\sum_{x\in\Z^d} \int_{-\infty}^\infty dt \   g(x,t)\pa_\xi\Phi(\xi,\eta,\tau_{x,-t}\cdot)v\|^2   \ \le \\
\frac{4}{m^4}\sum_{z\in Q} \int_{-\infty}^\infty ds \    \|\frac{\pa}{\pa\phi(z,s)}\sum_{x\in\Z^d} \int_{-\infty}^\infty dt \   g(x,t)\pa_\xi\Phi(\xi,\eta,\tau_{x,-t}\cdot)v\|^2   \ ,
\end{multline}
where we are using the notation $\pa/\pa\phi(z,s)F(\phi(\cdot,\cdot))$ to denote the value of the field derivative $dF(z,s;\phi(\cdot,\cdot))$  defined by (\ref{AF5}) of a function $F(\phi(\cdot,\cdot))$ at $(z,s)$.

Translation operators $\tau_{x,t}, \ x\in\Z^d,t\in\R$, act on functions $F_Q:\Om_Q\ra\C$  by $\tau_{x,t} F_Q(\phi(\cdot,\cdot))=F_Q(\tau_{x,t}\phi(\cdot,\cdot))$. We shall also need to use translation operators $T_{x,t}, \ x\in\Z^d,t\in\R$, which act on functions $G_Q:Q\times\R\times \Om_Q\ra\C$ by $T_{x,t} G_Q(z,s;\phi(\cdot,\cdot))=G_Q(z+x,s+t;\phi(\cdot,\cdot))$, so $T_{x,t}$ acts on the first two variables of $G_Q(\cdot,\cdot;\phi(\cdot,\cdot))$. The operators  $\tau_{x,t}, \ x\in\Z^d,t\in\R$, act on the third variable of $G_Q(\cdot,\cdot;\phi(\cdot,\cdot))$, and it is clear that they commute with the $T_{x,t}, \ x\in\Z^d,t\in\R$.
Let $F_Q:\Om_Q\ra\C$ be a function which is $C^1$ with respect to the $L^2(Q\times\R)$ metric as in Lemma 6.1. One easily sees from (\ref{AE5}), (\ref{AF5}) that 
\be \label{BF5}
d[\tau_{x,t} F_Q] \ = \ T_{-x,-t}\tau_{x,t} dF_Q, \quad x\in \Z^d, \ t\in\R,
\ee
whence it follows from (\ref{A2})  that
\be \label{BG5}
d[\pa_{j,\xi}\tau_{x,-t} F_Q] \ = \ [e^{-i{\bf e}_j.\xi}T_{-{\bf e}_j,0}\tau_{{\bf e}_j,0}-1]T_{-x,t}\tau_{x,-t} dF_Q, \qquad 1\le j\le d, \ x\in \Z^d, \ t\in\R.
\ee
Hence if we define a function $G_Q:Q\times\R\times \Om_Q\ra\C$ by
\be \label{BH5}
G_Q(y,r;\phi(\cdot,\cdot)) \ = \ e^{-iy\cdot\xi}dF_Q(-y,r;\tau_{y,-r}\phi(\cdot,\cdot)), \quad y\in Q, \ r\in\R,
\ee
then (\ref{BG5}) implies that
\be \label{BI5}
d[\pa_{j,\xi}\tau_{x,-t} F_Q](z,-s;\phi(\cdot,\cdot)) \ = \ e^{i(x-z)\cdot\xi} \ \na_jG_Q(x-z,t-s,\tau_{z,-s}\phi(\cdot,\cdot)),  \qquad 1\le j\le d, \ x,z\in \Z^d, \ t,s\in\R.
\ee
On taking $F_Q(\phi(\cdot,\cdot))=\Phi_Q(\xi,\eta,\phi(\cdot,\cdot))v$ and defining $G_Q$ by (\ref{BH5}), we conclude from (\ref{BI5}) that (\ref{BE5}) is the same as
\begin{multline} \label{BJ5}
\|P\sum_{x\in\Z^d}\int_{-\infty}^\infty dt \  g(x,t)\pa_\xi\Phi_Q(\xi,\eta,\tau_{x,-t}\cdot)v\|^2   \ \le \\
\frac{4}{m^4}\sum_{z\in Q}\int_{-\infty}^\infty ds \  \|  \ \sum_{x\in\Z^d}\int_{-\infty}^\infty dt \  g(x,t) e^{i(x-z)\cdot\xi} \  \na G_Q(x-z,t-s,\phi(\cdot,\cdot))\|^2 \ . 
\end{multline}

We can find an equation for $G_Q(\cdot,\cdot;\phi(\cdot,\cdot))$ by applying the operator $\pa/\pa\phi(\cdot,\cdot)$ to (\ref{AZ5}).   To see this let $h\in L^2(Q\times\R)$ be $C^1$ as a function of time and of compact support. Then (\ref{AZ5}) holds for $\om=\phi(\cdot,\cdot)$ and $\om=\phi(\cdot,\cdot)+\ve h(\cdot,\cdot)$. On subtracting the equations (\ref{AZ5}) for the different values of $\om$, dividing by $\ve$ and letting $\ve\ra 0$, we have from (\ref{AE5}), (\ref{AF5}) that the first term on the LHS of (\ref{AZ5}) converges to  $\eta[d\Phi(\xi,\eta,\phi(\cdot,\cdot))v,h]=\eta[dF_Q(\phi(\cdot,\cdot)),h]$. To find a similar expression for the limit as $\ve\ra 0$ of the second term on the LHS of (\ref{AZ5}), we observe that for $\del>0$, 
\begin{multline} \label{BK5}
\lim_{\ve\ra0} \frac{F_Q(\tau_{0,\del}[\phi(\cdot,\cdot)+\ve h(\cdot,\cdot)])-F_Q(\tau_{0,\del}\phi(\cdot,\cdot))}{\ve} \ = \\
 [dF_Q(\tau_{0,\del}\phi(\cdot,\cdot)),T_{0,\del} h] \ = \  [T_{0,-\del}dF_Q(\tau_{0,\del}\phi(\cdot,\cdot)), h] \ .
\end{multline}
Hence, assuming one can interchange the  limits $\ve\ra 0$ and $\del\ra 0$, we see from (\ref{BK5}) that the second term on the LHS of the difference of the two equations (\ref{AZ5}) converges to
\be \label{BL5}
\lim_{\del\ra 0} \left[ \frac{T_{0,-\del}dF_Q(\tau_{0,\del}\phi(\cdot,\cdot))-dF_Q(\phi(\cdot,\cdot))}{\del} \ , h\right] \ = \ [D_0dF_Q(\phi(\cdot,\cdot)),h] \ .
\ee
To find the limit as $\ve\ra 0$ of the term on the RHS of (\ref{AZ5})  we use the fact that $\mathbf{a}(\phi(\cdot,\cdot))=\tilde{\mathbf{a}}(\phi(0,0))$. Thus we obtain the expression
\be \label{BM5}
\lim_{\ve\ra 0}\frac{\pa^*_{\xi} \mathbf{a}(\phi(\cdot,\cdot))v-\pa^*_{\xi} \mathbf{a}(\phi(\cdot,\cdot)+\ve h(\cdot,\cdot))v}{\ve} \ =  \ -[D_\xi^*\{\ \del(\cdot,\cdot) D\tilde{{\bf a}}(\phi(0,0))v\},h] \ ,
\ee
where the operators  $D_{\xi}=(D_{1,\xi},..,D_{d,\xi})$ and $D^*_{\xi}=(D^*_{1,\xi},..,D^*_{d,\xi})$ are given by the formulae
\be \label{BN5}
D_{j,\xi}=[e^{-i{\bf e}_j.\xi}T_{-{\bf e}_j,0}\tau_{{\bf e}_j,0}-1], \quad 
D^*_{j,\xi}=[e^{i{\bf e}_j.\xi}T_{{\bf e}_j,0}\tau_{-{\bf e}_j,0}-1] \ , \quad 1\le j\le d.
\ee
The  function $\del:Q\times\R\ra\R$ in (\ref{BM5})  is the  delta function, $\del(0,t)=\del(t), \ \del(z,t)=0, \ z\ne 0$, where $\del(\cdot)$ is the Dirac delta function.  The limit as $\ve\ra 0$ of the third term on the LHS
of (\ref{AZ5}) can be expressed by a  similar formula. Thus we have
\begin{multline} \label{BO5}
\lim_{\ve\ra 0}\frac{\pa^*_{\xi} \mathbf{a}(\phi(\cdot,\cdot)+\ve h(\cdot,\cdot))\pa_\xi\Phi(\xi,\eta,\phi(\cdot,\cdot)+\ve h(\cdot,\cdot))v-\pa^*_{\xi} \mathbf{a}(\phi(\cdot,\cdot))\pa_\xi\Phi(\xi,\eta,\phi(\cdot,\cdot))v}{\ve}  \\
 = \ \left[D_\xi^*\tilde{{\bf a}}(\phi(0,0))D_\xi  \ dF_Q(\phi(\cdot,\cdot)) 
 +D_\xi^*\{ \del(\cdot,\cdot) D\tilde{{\bf a}}(\phi(0,0))\pa_\xi F_Q(\phi(\cdot,\cdot))\},h \right] \ . 
\end{multline}
 It follows from (\ref{AZ5}) and (\ref{BL5})-(\ref{BO5}) that $dF_Q(\phi(\cdot,\cdot))$ satisfies the equation
\begin{multline} \label{BP5}
\eta dF_Q(\phi(\cdot,\cdot))+D_0dF_Q(\phi(\cdot,\cdot))+D_\xi^*\tilde{{\bf a}}(\phi(0,0))D_\xi  \ dF_Q(\phi(\cdot,\cdot))  \\ 
= \ -D_\xi^*[ \del(\cdot,\cdot) D\tilde{{\bf a}}(\phi(0,0))\{v+\pa_\xi F_Q(\phi(\cdot,\cdot))\}] \ .
\end{multline}
Evidently for any $(y,-r)\in\Z^d\times\R$ we can replace $\phi(\cdot,\cdot)$ in (\ref{BP5}) by $\tau_{y,-r}\phi(\cdot,\cdot)$.
If we now evaluate (\ref{BP5}) with $\tau_{y,-r}\phi(\cdot,\cdot)$ substituted for $\phi(\cdot,\cdot)$ and  with the first variable of $dF_Q(\cdot,\cdot;\tau_{y,-r}\phi(\cdot,\cdot))$ equal to $-y$ and the second variable equal to $r$, we obtain an equation for the function $G_Q(\cdot,\cdot;\phi(\cdot,\cdot))$ of (\ref{BH5}),
\begin{multline} \label{BQ5}
\eta G_Q(y,r;\phi(\cdot,\cdot))- \frac{\pa  G_Q(y,r;\phi(\cdot,\cdot))}{\pa r} +\na^*_y\tilde{{\bf a}}(\phi(y,-r))\na_yG_Q(y,r;\phi(\cdot,\cdot)) \\
= \  -\na^*_y[ e^{-iy\cdot\xi}\del(-y,r) D\tilde{{\bf a}}(\phi(y,-r))\{v+\pa_\xi F_Q(\tau_{y,-r}\phi(\cdot,\cdot))\}] \ . 
\end{multline}

We define an operator $T_{\eta}$ on functions $g: \Z^d\times\R\times\Om\ra\C^d$ as follows: Let $u(y,r;\phi(\cdot,\cdot))$ be the solution to the equation
\be \label{BR5}
\eta u(y,r;\phi(\cdot,\cdot))- \frac{\pa  u(y,r;\phi(\cdot,\cdot))}{\pa r} +\La\na^*_y\na_yu(y,r;\phi(\cdot,\cdot))  \ = \  \La \na^*_y g(y,r;\phi(\cdot,\cdot)) \ .
\ee
Then $T_{\eta} g(y,r;\phi(\cdot,\cdot))=\na_yu(y,r;\phi(\cdot,\cdot)), \ y\in\Z^d,r\in\R$. It is easy to see that $T_{\eta}$ is a bounded operator on $L^2(\Z^d\times\R\times\Om,\C^d)$ with norm $\|T_{\eta}\|$ satisfying $\|T_{\eta}\|\le 1$. We can obtain a formula for $T_{\eta}$ which is similar to (\ref{AN2}).
Thus we have that
\be \label{BS5}
T_{\eta} g(y,r;\phi(\cdot,\cdot)) \ = \   \La\int_{0}^\infty e^{-\eta t} \ dt\sum_{x\in \Z^d} \left\{\nabla\nabla^* G_{\La}(x,t)\right\}  \ g(y-x,r+t;\phi(\cdot,\cdot)) \ ,
\ee 
with $G_\La(x,t)=G(x,\La t), \ x\in\Z^d,t>0,$ and $G(\cdot,\cdot)$ the Green's function (\ref{AM2}).  We can similarly define operators $T_{\eta,Q}$ on periodic functions $g_Q: Q\times\R\times\Om\ra\C^d$ by extending $g_Q$ periodically to the function  $g_Q: \Z^d\times\R\times\Om\ra\C^d$  and setting $T_{\eta,Q}g_Q=T_\eta g_Q$.  If we now take $g_Q$  to be given by the RHS of (\ref{BQ5}), so $\La g_Q(y,r;\phi(\cdot,\cdot))=e^{-iy\cdot\xi}\del(-y,r) D\tilde{{\bf a}}(\phi(y,-r))\{v+\pa_\xi F_Q(\tau_{y,-r}\phi(\cdot,\cdot))\}$, then $T_{\eta,Q} g_Q(y,r;\phi(\cdot,\cdot))=e^{r\eta}h_Q(y,r;\phi(\cdot,\cdot))$ where 
\begin{multline} \label{BT5}
h_Q(y,r;\phi(\cdot,\cdot)) \ = \  \sum_{n\in\Z^d}e^{-i(y+nL)\cdot\xi}\left\{\nabla\nabla^* G_{\La}(y+nL,-r)\right\} D\tilde{{\bf a}}(\phi(0,0))\{v+\pa_\xi F_Q(\phi(\cdot,\cdot))\} \ , \\
{\rm if \ } y\in Q,  \ r<0, \quad h_Q(y,r,\phi(\cdot,\cdot)) \ = \ 0 \ {\rm if \ } y\in Q, \  r>0,
\end{multline}
where $L$ is the length of the side of $Q$.

We can rewrite (\ref{BQ5}) using the function $h_Q$ of (\ref{BT5}). Thus let $u_Q(y,r;\phi(\cdot,\cdot)), \ y\in Q,r\in\R,$ be the solution to the periodic equation (\ref{BR5}) with $g=g_Q$.  Then (\ref{BQ5}), (\ref{BR5}) imply that $v_Q=G_Q+u_Q$ is the solution to the equation 
\begin{multline} \label{BU5}
\eta v_Q(y,r;\phi(\cdot,\cdot))- \frac{\pa  v_Q(y,r;\phi(\cdot,\cdot))}{\pa r} +\na^*_y\tilde{{\bf a}}(\phi(y,-r))\na_yv_Q(y,r;\phi(\cdot,\cdot)) \\
= \  -e^{r\eta}\La \na^*_y[\tilde{{\bf b}}(\phi(y,-r))h_Q(y,r;\phi(\cdot,\cdot))] \ , 
\end{multline}
where $\tilde{{\bf a}}(\cdot)=\La[I_d-\tilde{{\bf b}}(\cdot)]$.  It follows from (\ref{BA5}) that $\pa_\xi F_Q(\phi(\cdot,\cdot))$ is in $\mathcal{H}(\Om)$ and  $\|\pa_\xi F_Q(\phi(\cdot,\cdot))\|\le \La|v|/\la $. Since $|\na\na^*G_\La(x,t)|, \ x\in\Z^d,t>0,$ is bounded by $1/(\La t+1)$ times the RHS of (\ref{I2}), it follows from (\ref{BT5}) that $h_Q$ is in $ L^2(Q\times\R\times\Om,\C^d)$ and $\|h_Q\|\le C\sqrt{\La}\La_1|v|/\la$, where $C$ is a constant depending only on $d$, and  $\La_1$ is the constant in the statement of Theorem 1.2.  Since from (\ref{BU5}) we see  that $\|\na v_Q\|\le \La\|h_Q\|/\la$, we conclude that $\|\na G_Q\|\le C\La_1|v|(\La/\la)^2/\sqrt{\La}$.   It follows now from (\ref{BJ5}) and Young's inequality that (\ref{BD5}) holds for $p=1$ provided we can show that the LHS of (\ref{BJ5}) converges as $Q\ra\Z^d$ to the LHS of (\ref{BD5}).  To see this note that we are assuming that the function $g(\cdot,\cdot)$ in (\ref{BJ5}) has compact support and that $\Re\eta>0$. Hence we can use the perturbation expansion obtained from (\ref{J2}) and Proposition 6.1 to prove the convergence. 

We can also show that  the RHS of (\ref{BJ5}) converges as $Q\ra\Z^d$ by generating the function $\na G_Q$ from a perturbation expansion. Thus  let $\mathbf{B}:\Z^d\times\R\times\Om\ra\C^d\otimes\C^d$ be defined by $\mathbf{B}(y,r;\phi(\cdot,\cdot))=\tilde{\mathbf{b}}(\phi(y,-r)), \ y\in\Z^d,r\in\R$.  It follows from (\ref{BR5}), (\ref{BU5}) that $\na v_Q$ is the solution to the equation
\be \label{BV5}
\na v_Q(\cdot,\cdot;\phi(\cdot,\cdot)) \ = \  T_{\eta,Q}[\mathbf{B }(\cdot,\cdot;\phi(\cdot,\cdot))\{\na v_Q(\cdot,\cdot;\phi(\cdot,\cdot)) - e^{r\eta} h_Q(\cdot,\cdot;\phi(\cdot,\cdot))\}] \ .
\ee
Since $g\in L^1(\Z^d\times\R)$  it follows by the uniform in $Q$ estimates of the previous paragraph that it is sufficient to prove convergence as $Q\ra\Z^d$ for any finite number of terms in the Neumann series expansion of (\ref{BV5}).  The convergence for a finite number of terms  follows from Proposition 6.1 using the  fact that the function $g(\cdot,\cdot)$ in (\ref{BJ5}) has compact support and that $\Re\eta>0$. 
We have shown now that  
\begin{multline} \label{BW5}
\|P\sum_{x\in\Z^d}\int_{-\infty}^\infty dt \  g(x,t)\pa_\xi\Phi(\xi,\eta,\tau_{x,-t}\cdot)v\|^2   \ \le \\
\frac{4}{m^4}\sum_{z\in \Z^d}\int_{-\infty}^\infty ds \  \|  \ \sum_{x\in\Z^d}\int_{-\infty}^\infty dt \  g(x,t) e^{i(x-z)\cdot\xi} \  \na G(x-z,t-s,\phi(\cdot,\cdot))\|^2 \ , 
\end{multline}
where $ \na G=\na v-h$  with
\begin{multline} \label{BX5}
h(y,r;\phi(\cdot,\cdot)) \ = \  \left\{\nabla\nabla^* G_{\La}(y,-r)\right\}^* D\tilde{{\bf a}}(\phi(0,0))\{v+\pa_\xi F_Q(\phi(\cdot,\cdot))\} \ , \\
{\rm if \ } y\in \Z^d,  \ r<0, \quad h(y,r,\phi(\cdot,\cdot)) \ = \ 0 \ {\rm if \ } y\in \Z^d, \  r>0,
\end{multline}
and $\na v$ is the solution to the equation 
\be \label{BY5}
\na v(\cdot,\cdot;\phi(\cdot,\cdot)) \ = \  T_{\eta}[\mathbf{B }(\cdot,\cdot;\phi(\cdot,\cdot))\{\na v(\cdot,\cdot;\phi(\cdot,\cdot)) - e^{r\eta} h(\cdot,\cdot;\phi(\cdot,\cdot))\}] \ .
\ee

We can now  easily extend the previous argument by using the continuous time version of the Calderon-Zygmund theorem, Corollary  5.1, to prove (\ref{BD5}) for a range of $p>1$.  
Define  for $q\ge 1$ the Banach space $L^q(\Z^d\times\R\times\Om,\C^d)$ of  functions $g:\Z^d\times\R\times\Om\ra\C ^d$ with norm $\|g\|_q$ given by
\be \label{BZ5}
\|g\|_q^q \ = \ \sum_{y\in \Z^d}\int_{-\infty}^\infty dt \  \|g(y,r;\phi(\cdot,\cdot))\|^q \ ,
\ee
where $ \|g(y,r;\phi(\cdot,\cdot))\|$ is the norm of  $g(y,r;\phi(\cdot,\cdot))\in\mathcal{H}(\Om)$.
By following the argument of Lemma 5.2, we see that $T_{\eta}$ is bounded on $L^q(\Z^d\times\R\times\Om,\C^d)$  for $q>1$ with norm $\|T_{\eta}\|_q\le 1+\del(q)$, where $\lim_{q\ra 2}\del(q)=0$. Noting that $\|h\|_q\le C_q\La^{1-1/q}\La_1|v|/\la$  for a constant $C_q$ depending only on $d,q$, we conclude from (\ref{BY5}) and the Calderon-Zygmund theorem  that there exists $q_0(\La/\la)<2$ depending only on $d,\La/\la$, such that $\na G$ is in  $L^q(\Z^d\times\R\times\Om,\C^d)$   for $q_0(\La/\la)\le q\le 2$, and
$\|\na G\|_q  \le  C\La^{-1/q}\La_1|v|$ where the constant $C$ depends only on $d,\La/\la$.  The inequality (\ref{BD5}) with $p=2q/(3q-2)$ follows from (\ref{BW5}) and Young's inequality. 
\end{proof}
In order to establish Hypothesis 4.2 for the massive field theory environment $(\Om,\mathcal{F},P)$ we shall need a refinement of the Poincar\'{e} inequality (\ref{AR5}).  We can see what this refinement should be by considering again functions of the form (\ref{AG5}), for which (\ref{AJ5}) and (\ref{AO5}) hold. It follows from (\ref{AO5}) that $\hat{h}(\zeta)$ satisfies the inequality
\be \label{CM5}
0 \ \le \hat{h}(\zeta) \ \le \ \frac{4}{m^4+4\zeta^2} \langle \|dG(\phi(\cdot,0))\|^2\rangle_{\Om_Q} \ .
\ee
Substituting the RHS of (\ref{CM5}) into (\ref{AJ5}) we obtain the  inequality
\begin{multline} \label{CN5}
 {\rm var}_{\Om_Q}[ F(\phi(\cdot,\cdot))] \ \le \   \frac{1}{m^2} \int_{-\infty}^\infty\int_{-\infty}^\infty g(t)\overline{g(s)}e^{-m^2|t-s|/2} \ dt \ ds  \ \langle \|dG(\phi(\cdot,0))\|^2\rangle_{\Om_Q} \\
 \le \  \frac{4}{m^4}\int_{-\infty}^\infty |g(t)|^2 \ dt \  \langle \|dG(\phi(\cdot,0))\|^2\rangle_{\Om_Q}  \ = \ 
 \frac{4}{m^4} \langle \|dF(\cdot,\cdot;\phi(\cdot,\cdot))\|^2\rangle_{\Om_Q} \ .
\end{multline}
Observe now that the first integral on the  RHS of (\ref{CN5})  can be written as a convolution  $[g,f^*g]$ where $f(t)=m^{-2}e^{-m^2|t|/2}, \ t\in\R$.  Hence it follows from Young's inequality that for $1\le p\le2$, 
\be \label{CO5}
{\rm var}_{\Om_Q}[ F(\phi(\cdot,\cdot))] \ \le \  \frac{C}{m^{2(3-2/p)}}\|g\|_p^{2} \langle \|dG(\phi(\cdot,0))\|^2\rangle_{\Om_Q}  \ ,
\ee
where $\|g\|_p$ denotes the $L^p$ norm of $g(\cdot)$ and $C$ is a universal constant.  The Poincar\'{e} inequality (\ref{AR5}) only implies (\ref{CO5}) for $p=2$. 

We shall also need a continuous time version of Corollary 5.1, as we already did in the proof of Proposition 6.3. Thus  let $T_{\xi,\eta}$ act on functions  $g:\Z^d\times\R\ra\C^d$ as
\be \label{DI5}
T_{\xi,\eta} g(y,r) \ = \   \La\int_{0}^\infty e^{-\eta t} \ dt\sum_{x\in \Z^d} \left\{\nabla\nabla^* G_{\La}(x,t)\right\}  \ e^{ix\cdot\xi} g(y-x,r+t) \ .
\ee 
Comparing the operator $T_{\xi,\eta}$ of (\ref{DI5}) to the operator $T_{\xi,\eta}$ of (\ref{B4}), we see that one can easily extend the argument of $\S4$ to obtain a continuous time version of Corollary 5.1:
\begin{corollary}
For $(\xi,\eta)$ satisfying the assumptions of Lemma 5.2 the operator $T_{\xi,\eta}$ of (\ref{DI5}) is bounded on $\mathcal{H}^p(\Z^{d}\times\R)$ for $3/2\le p\le 3,$ and  $\|T_{\xi,\eta}\|_p\le [1+\del(p)]\left(1+C_2 |\Im\xi|^2/[\Re\eta/\La]\right)$, where the function $\del(\cdot)$ depends only on $d$ and $\lim_{p\ra 2}\del(p)=0$. 
\end{corollary}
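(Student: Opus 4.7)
The plan is to imitate the three-step strategy of Section 5 verbatim, with $\Z^{d+1}$ replaced by $\Z^d\times\R$ and the discrete-time heat kernel replaced by its continuous-time counterpart. First I would establish an $L^2$ bound. Since $T_{\xi,\eta}$ of (\ref{DI5}) is a convolution on $\Z^d\times\R$, its action on the Fourier side (now $\zeta\in[-\pi,\pi]^d$, $\theta\in\R$) is multiplication by
\be \label{plan1}
\hat m_{\xi,\eta}(\zeta,\theta) \ = \ \frac{\La e(\xi-\zeta)e(\bar\xi-\zeta)^*}{\eta-i\theta+\La e(\bar\xi-\zeta)^*e(\xi-\zeta)} \ ,
\ee
and Lemma 5.1 (whose proof used only algebraic identities in $e(\cdot)$ and the hypothesis $4d\La\le 1$, not that $\theta\in[-\pi,\pi]$) bounds $|\hat m_{\xi,\eta}(\zeta,\theta)|$ by $1+C_2|\Im\xi|^2/[\Re\eta/\La]$ uniformly in $(\zeta,\theta)$, giving the $L^2$ bound with constant $1+C_2|\Im\xi|^2/[\Re\eta/\La]$.

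Second I would carry out the parabolic Calderón--Zygmund argument of Lemma 5.2, replacing the lattice $\Z^{d+1}$ with $\Z^d\times\R$. Given a test function $\psi\in \mathcal{H}^p(\Z^d\times\R)$ with finite support, perform a stopping-time decomposition into dyadic parabolic boxes of the form $R=R_x\times I_t$, where $R_x\subset\R^d$ is a cube of side $2^N$ and $I_t\subset\R$ has length $2^{2N+N_0}$, stopping at the scale on which the mean of $|\psi|$ exceeds $\ga$. The only analytic inputs needed are the off-diagonal estimates on the kernel $\nabla\nabla^*G_\La(x,t)$ analogous to (\ref{S4}),(\ref{T4}): these bounds for the continuous-time discrete-space heat kernel follow from the standard Gaussian estimate (\ref{I2}) (which the continuous-time $G_\La$ also satisfies) together with the parabolic identity $\pa_t G_\La=-\La\na^*\na G_\La$ for the time derivative, and the telescoping identity for the spatial difference. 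Splitting $\psi=\psi_1+\psi_2$ as in (\ref{Q4}), estimating $T_{\xi,\eta}\psi_1$ by the $L^2$ bound and $T_{\xi,\eta}\psi_2$ by integrating the kernel differences over the complement of doubled boxes, one obtains the distributional inequality (\ref{J4}) verbatim, and hence the bounds $\|T_{\xi,\eta}\|_p\le C\,(1+C_2|\Im\xi|^2/[\Re\eta/\La])$ for $p=3/2$ and $p=3$.

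Third, the Riesz--Thorin interpolation theorem \cite{sw} applied between $p=3/2$, $p=2$, $p=3$ produces, as in Corollary 5.1, a bound of the form $[1+\del(p)](1+C_2|\Im\xi|^2/[\Re\eta/\La])$ with $\del(p)\to0$ as $p\to2$, since at the midpoint $p=2$ the constant from step (i) is already sharp up to multiplicative $1$.

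The step I expect to require the most care is the parabolic Calderón--Zygmund decomposition in continuous time, specifically verifying the cancellation bound (\ref{U4}) for the continuous-time kernel $\nabla\nabla^*G_\La$ on the complement of a doubled parabolic box. The aspect ratio of the boxes is dictated by the requirement that $\La t\sim |x|^2$, and one must check that upon summing $\La\,|e^{-\Re\eta(t-t')-i(x-x')\cdot\Im\xi}\na\na^*G_\La(x-x',t-t')-e^{-\Re\eta(t-t^m)-i(x-x^m)\cdot\Im\xi}\na\na^*G_\La(x-x^m,t-t^m)|$ over $(x,t)\in(\Z^d\times\R)\setminus\tilde R_m$, one obtains a bound depending only on $d$, uniformly in the aspect-ratio scale of $R_m$. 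This sum is dominated by $\int_{\R}dt\sum_{x\in\Z^d}$ of Gaussian-type tails and converges by the parabolic scaling, but the bookkeeping mirrors the discrete case in (\ref{S4})--(\ref{U4}) with the role of the discrete time difference in (\ref{S4}) played by the continuous derivative $\pa_t$.
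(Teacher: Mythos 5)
Your proposal is correct and follows essentially the route the paper intends: the paper itself only asserts that Corollary 6.1 follows by "comparing (\ref{DI5}) to (\ref{B4})" and extending the argument of Lemmas 5.1, 5.2 and Corollary 5.1 to $\Z^d\times\R$, which is exactly the multiplier bound, parabolic Calder\'on--Zygmund decomposition, and Riesz--Thorin interpolation you spell out. The only nuance is that one cannot quote Lemma 5.1 verbatim (the denominator is now $\eta-i\theta+\La e(\bar\xi-\zeta)^*e(\xi-\zeta)$ rather than $e^{\eta-i\theta}-1+\La e(\bar\xi-\zeta)^*e(\xi-\zeta)$), but its proof transfers with $e^{\Re\eta}-1$ replaced by $\Re\eta$, as you in effect observe.
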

\begin{proof}[Proof of Hypothesis 4.2]
We shall first prove Hypothesis 3.2.
 We assume  $g:\Z^{d}\times\R\ra\C^d\otimes \C^d$ has compact support and for $k=1,2,...,$ denote by $a_k(g,\xi,\eta)$ the random $d\times d$ matrix  
\be \label{CC5}
a_k(g,\xi,\eta) \ = \   \sum_{x\in\Z^{d}}\int_{-\infty}^\infty  dt \  g(x,t)\tau_{x,-t}P{\bf b}(\cdot)\left[ PT_{\xi,\eta}{\bf b}(\cdot)\right]^{k-1} \ .
\ee
Evidently Hypothesis 3.2 will follow if we can show there is a constant $C$ such that
\be \label{CD5}
\sum_{k=1}^\infty \|a_k(g,\xi,\eta)v\| \ \le  \ C\|g\|_p|v| \quad {\rm for \ } 1\le p\le p_0(\La/\la), \ v\in\C^d.
\ee
We establish (\ref{CD5}) by obtaining a bound  $\|a_k(g,\xi,\eta)v\|\le C_k\|g\|_p|v|$ where $C_k$ decays exponentially in $k$ as $k\ra\infty$. 

In the case $k=1$ we have from the Poincar\'{e} inequality  (\ref{AR5}) that 
\begin{multline} \label{CF5}
\|a_1(g,\xi,\eta)v\|^2 \ \le \  \frac{4}{m^4}\sum_{x\in\Z^d}\int_{-\infty}^\infty dt \ 
\|g(x,t)D\tilde{\mathbf{b}}(\phi(0,0))v\|^2 \\
 \le \  \left(\frac{2\La_1}{m^2\La}\|g\|_2|v|\right)^2 \ .
\end{multline}
We also have that $a_1(g,\xi,\eta)=\hat{g}(A,B)P\mathbf{b}(\cdot)$, where $\hat{g}$ is the Fourier transform (\ref{AO2}) of $g$ and $A,B$ are the self-adjoint operators (\ref{AQ2}), (\ref{AR2}). Hence we have that $\|a_1(g,\xi,\eta)v\|\le \|g\|_1|v|$. We conclude therefore from (\ref{CF5}) and  the Riesz-Thorin interpolation theorem \cite{sw} there is a constant $C_1$ such that  $\|a_1(g,\xi,\eta)v\|\le C_1\|g\|_p|v|$ for $1\le p\le 2$ . 

When $k>1$ we  write
\be \label{CG5}
a_k(g,\xi,\eta)v \ = \ P\sum_{x\in\Z^{d}}\int_{-\infty}^\infty  dt \  g(x,t)\tau_{x,-t}{\bf b}(\cdot)\pa_\xi F_k(\phi(\cdot,\cdot)) \ ,
\ee
where the functions $F_k(\phi(\cdot,\cdot))$ are defined inductively.  For  $\xi\in\R^d$ the $F_k(\phi(\cdot,\cdot))$ satisfy the recurrence equations
\begin{multline} \label{CH5}
[\eta+\pa]F_2(\phi(\cdot,\cdot))+\La\pa _\xi^*\pa_\xi  F_2(\phi(\cdot,\cdot)) \ = \ \La P\pa^*_\xi[\tilde{\mathbf{b}}(\phi(0,0))v] \ , \\
[\eta+\pa]F_k(\phi(\cdot,\cdot))+\La\pa _\xi^*\pa_\xi  F_k(\phi(\cdot,\cdot)) \ = \ \La P\pa^*_\xi[\tilde{\mathbf{b}}(\phi(0,0))\pa_\xi F_{k-1}(\phi(\cdot,\cdot))]   \ {\rm if   \ } k>2.
\end{multline}
The $F_k(\phi(\cdot,\cdot))$  for $\xi\in\C^d$ are defined by analytic continuation from the values of $F_k(\phi(\cdot,\cdot))$ when $\xi\in\R^d$. 
Similarly to (\ref{BH5}) we define for $k\ge 2$ functions $G_k:\Z^d\times\R\times\Om\ra \C$ by
\be \label{CI5}
G_k(y,r;\phi(\cdot,\cdot)) \ = \ e^{-iy\cdot\xi}dF_k(-y,r;\tau_{y,-r}\phi(\cdot,\cdot)), \quad y\in \Z^d, \ r\in\R.
\ee
Then from (\ref{CH5}) we see that the $G_k(y,r;\phi(\cdot,\cdot))$ satisfy the equations
\begin{multline} \label{CJ5}
\eta G_2(y,r;\phi(\cdot,\cdot))- \frac{\pa  G_2(y,r;\phi(\cdot,\cdot))}{\pa r} +\La\na^*_y\na_yG_2(y,r;\phi(\cdot,\cdot))  \ = \  \La P \na^*_y[e^{-iy\cdot\xi}\del(-y,r)D\tilde{\mathbf{b}}(\phi(y,-r))v] \ ,
\\
\eta G_k(y,r;\phi(\cdot,\cdot))- \frac{\pa  G_k(y,r;\phi(\cdot,\cdot))}{\pa r} +\La\na^*_y\na_yG_k(y,r;\phi(\cdot,\cdot))  \ = \\
\La P\na^*_y[e^{-iy\cdot\xi}\del(-y,r)D\tilde{\mathbf{b}}(\phi(y,-r)))\pa_\xi F_{k-1}(\tau_{y,-r}\phi(\cdot,\cdot))+\tilde{\mathbf{b}}(\phi(y,-r))\na_yG_{k-1}(y,r;\phi(\cdot,\cdot))] \quad {\rm if  \ } k>2.
\end{multline}

Instead of estimating the norm of the function $a_k(g,\xi,\eta)v$  of (\ref{CG5}) directly by using  the Poincar\'{e} inequality as in (\ref{BJ5}), we begin with the Clark-Okone formula (\ref{I5}).  Let $\phi(\cdot,t), \ t>0,$ be the solution of (\ref{C5}) with initial condition $\phi(\cdot,0)=0$.  We extend the function $\phi(\cdot,t)$ to $t<0$ by setting $\phi(\cdot,t)=0$ for $t<0$.  It is then easy to see that
\be \label{CK5}
\|a_k(g,\xi,\eta)v\|^2 \ = \
\lim_{T\ra\infty}{\rm var}_{\Om_Q,{\rm Mal}}[ \  H(\tau_{0,T}\phi(\cdot,\cdot)) \ ] \ ,
\ee
where the function $H(\phi(\cdot,\cdot))$ is given by the formula
\be \label{CL5}
H(\phi(\cdot,\cdot)) \ = \ \sum_{y\in\Z^{d}}\int_{-\infty}^\infty  ds \  g(y,s)\tilde{\mathbf{b}}(\phi(y,-s))\pa_\xi F_k(\tau_{y,-s}\phi(\cdot,\cdot))   \ .
\ee
We have now from (\ref{AS5})  that for $x\in Q, \ t>0,$ the Malliavin derivative $D_{\rm Mal} H(x,t;\tau_{0,T}\phi(\cdot,\cdot))=\sig_{1,T}(x,t;\phi(\cdot,\cdot))+\sig_{2,T}(x,t;\phi(\cdot,\cdot))$, where
\begin{multline} \label{CP5}
\sig_{1,T}(x,t;\phi(\cdot,\cdot)) \ = \\
 \sum_{y\in\Z^{d}}\int_{-\infty}^{T-t}  ds \  g(y,s)e^{-m^2(T-t-s)/2}G(x,y,t,T-s,\phi(\cdot,\cdot))D\tilde{\mathbf{b}}(\phi(y,T-s))\pa_\xi F_k(\tau_{y,T-s}\phi(\cdot,\cdot))   \ ,
\end{multline}
with $G(\cdot,\cdot,\cdot,\cdot,\phi(\cdot,\cdot))$ being the Green's function (\ref{O5}). 
The function $\sig_{2,T}(x,t;\phi(\cdot,\cdot))$ is given by the formula
\be \label{CQ5}
\sig_{2,T}(\cdot,\cdot;\phi(\cdot,\cdot)) \ = \ \sum_{y\in\Z^{d}}\int_{-\infty}^\infty  ds \  g(y,s)\tilde{\mathbf{b}}(\phi(y,T-s))D_{\rm Mal}[\pa_\xi F_k(\tau_{y,T-s}\phi(\cdot,\cdot)) ] \ .
\ee
It follows from  (\ref{I5}) that
\begin{multline} \label{CR5}
\|a_k(g,\xi,\eta)v\| \ \le \ \lim_{T\ra\infty}
 \left[\sum_{x\in Q}\int_0^\infty dt  \  |\langle \ \sig_{1,T}(x,t;\phi(\cdot,\cdot))  \ | \ \mathcal{F}_t \ \rangle_{\Om_{Q,{\rm Mal}}}|^2 \ \right]^{1/2} \\
+ \lim_{T\ra\infty}\left[\sum_{x\in Q}\int_0^\infty dt  \  |\langle \ \sig_{2,T}(x,t;\phi(\cdot,\cdot))  \ | \ \mathcal{F}_t \ \rangle_{\Om_{Q,{\rm Mal}}} |^2 \ \right]^{1/2} \ .
\end{multline}

To estimate the first term on the RHS of (\ref{CR5}) we argue as in Lemma 6.1. Thus from (\ref{CP5}) we have that 
\begin{multline} \label{CS5}
\sum_{x\in Q}\int_0^\infty dt  \  |\sig_{1,T}(x,t;\phi(\cdot,\cdot))  \ |^2 \ = \\ 
2 \sum_{x\in Q}\int_{0<t<T-s<T-s'} dt \  ds \ ds' \ e^{-m^2(T-t-s)/2}e^{-m^2(T-t-s')/2} h(x,s)\cdot\overline{h(x,s')} \  ,
 \end{multline}
 where
 \be \label{CT5}
 h(x,s) \ = \ \sum_{y\in Q} G(x,y,t,T-s,\phi(\cdot,\cdot)) g(y,s)D\tilde{\mathbf{b}}(\phi(y,T-s))\pa_\xi F_k(\tau_{y,T-s}\phi(\cdot,\cdot)) \ .
 \ee
 It follows from (\ref{AC5}) that
 \be \label{CU5}
 \sum_{x\in Q} |h(x,s)|^2 \ \le \ \sum_{y\in Q}| g(y,s)D\tilde{\mathbf{b}}(\phi(y,T-s))\pa_\xi F_k(\tau_{y,T-s}\phi(\cdot,\cdot))|^2 \ ,
 \ee
 and so we conclude from (\ref{CS5}) that
 \be \label{CV5}
 \sum_{x\in Q}\int_0^\infty dt  \  |\sig_{1,T}(x,t;\phi(\cdot,\cdot))  \ |^2 \ \le \frac{1}{m^2}\int_{t-T}^\infty\int_{t-T}^\infty ds \  ds'  \  e^{-m^2|s-s'|/2} k(s,\tau_{0,T}\phi(\cdot,\cdot)) \  k(s',\tau_{0,T}\phi(\cdot,\cdot)) \ .
 \ee
 The function $k(s,\phi(\cdot,\cdot))$ is given by the formula
 \be \label{CW5}
 k(s,\phi(\cdot,\cdot)) \ =  \ \left[\sum_{y\in Q}| g(y,s)D\tilde{\mathbf{b}}(\phi(y,-s))\pa_\xi F_k(\tau_{y,-s}\phi(\cdot,\cdot))|^2\right]^{1/2} \ = \|g(\cdot,s)\|_2 \  k_1(s,\phi(\cdot,\cdot)) ,
 \ee
 where $\|g(\cdot,s)\|_2$ denotes the $L^2$ norm of the $d\times d$ matrix valued function $g(y,s), \ y\in\Z^d$.  Observe next that  (\ref{CV5}), (\ref{CW5}) and the Schwarz inequality imply that
 \be \label{CX5}
  \sum_{x\in Q}\int_0^\infty dt  \  |\sig_{1,T}(x,t;\phi(\cdot,\cdot))  \ |^2 \ \le \frac{1}{m^2}\int_{t-T}^\infty\int_{t-T}^\infty ds \  ds'  \  e^{-m^2|s-s'|/2}\|g(\cdot,s)\|_2\|g(\cdot,s')\|_2 k_1(s,\tau_{0,T}\phi(\cdot,\cdot))^2 \ . 
 \ee
 Hence on using the fact that for any $s\in\R$, one has 
 \be \label{CY5}
\lim_{T\ra\infty} \langle  \  k_1(s,\tau_{0,T}\phi(\cdot,\cdot))^2 \ \rangle_{\Om_{Q,{\rm Mal}}} \ \le \ 
\frac{C\La_1^2}{\La^2} \langle \ |\pa_\xi F_k\phi(\cdot,\cdot))|^2 \ \rangle_{\Om_Q} \ ,
 \ee
 where the constant $C$ depends only on $d$, we conclude as in the argument  showing (\ref{CO5}) that for $1\le p\le 2$ there is a constant $C$ depending only on  $d$ such that
 \begin{multline} \label{CZ5}
 \lim_{T\ra\infty}
 \sum_{x\in Q}\int_0^\infty dt  \  |\langle \ \sig_{1,T}(x,t;\phi(\cdot,\cdot))  \ | \ \mathcal{F}_t \ \rangle_{\Om_{Q,{\rm Mal}}}|^2   \ \le \\
 \frac{C\La_1^2}{\La^2m^{2(3-2/p)}}\left[ \int_{-\infty}^\infty \|g(\cdot,s)\|_2^p \ ds  \right]^{2/p}  \langle \ |\pa_\xi F_k\phi(\cdot,\cdot))|^2 \ \rangle_{\Om_Q}  \ .
\end{multline}
From (\ref{CH5}) we see that for $\xi\in\R^d$
\be \label{DA5}
\langle\ |\pa_\xi F_k(\phi(\cdot,\cdot))|^2 \ \rangle_{\Om_Q} \ \le \  (1-\la/\La)^{2(k-1)}|v|^2\ ,
\ee
and so (\ref{CZ5}) implies that for $\xi\in\R^d$ the first term on the RHS of (\ref{CR5}) is bounded as
 \be \label{DB5}
 \lim_{T\ra\infty}
 \sum_{x\in Q}\int_0^\infty dt  \  |\langle \ \sig_{1,T}(x,t;\phi(\cdot,\cdot))  \ | \ \mathcal{F}_t \ \rangle_{\Om_{Q,{\rm Mal}}}|^2   \ \le \  
  \left\{\frac{C\La_1}{\La m^{3-2/p}} \|g(\cdot,\cdot)\|_p \  (1-\la/\La)^{k-1}|v| \ \right\}^2 \ ,
 \ee
where the $p$ norm of $g(\cdot,\cdot)$ is given by (\ref{H6}). 

We can estimate the second term on the RHS of (\ref{CR5}) by following the argument of Proposition 6.3.
Thus we have that
\begin{multline} \label{DC5}
 \lim_{T\ra\infty}
 \sum_{x\in Q}\int_0^\infty dt  \  |\langle \ \sig_{2,T}(x,t;\phi(\cdot,\cdot))  \ | \ \mathcal{F}_t \ \rangle_{\Om_{Q,{\rm Mal}}}|^2 \ \le  \\
\frac{4}{m^4}\sum_{z\in Q}\int_{-\infty}^\infty ds \  \left\langle \ \left|
\sum_{x\in \Z^d}\int_{-\infty}^\infty dt \  g(x,t)  \  \tilde{\mathbf{b}}(\phi(x,-t))e^{i(x-z)\cdot\xi}\na G_k(x-z,t-s;\tau_{z,-s}\phi(\cdot,\cdot)) \  \right|^2 \ \right\rangle_{\Om_Q}    \\  
=  \ \frac{4}{m^4}\sum_{z\in Q}\int_{-\infty}^\infty ds \\
 \left\langle \ \left|
\sum_{x\in \Z^d}\int_{-\infty}^\infty dt \  g(x,t)  \  \tilde{\mathbf{b}}(\phi(x-z,s-t)) e^{i(x-z)\cdot\xi}\na G_k(x-z,t-s;\phi(\cdot,\cdot)) \  \right|^2 \ \right\rangle_{\Om_Q}   \ ,
\end{multline}
where we have used  the invariance of the operators $\tau_{z,-s}, \ z\in\Z^d,s\in\R,$ on $(\Om_Q,\mathcal{F}_Q,P_Q)$. As in Proposition 6.3 we are justified in taking the limit $Q\ra\Z^d$ in (\ref{DC5}), and hence (\ref{BR5}), (\ref{BS5}),  (\ref{CJ5}) imply that $\na G_2(\cdot,\cdot;\phi(\cdot,\cdot))$ is given by the formula
\begin{multline} \label{DE5}
\na G_2(y,r;\phi(\cdot,\cdot)) \ = \ \La e^{\eta r}\na\na^*G_\La(y,-r)P[D\tilde{\mathbf{b}}(\phi(0,0))v] \ , \\
{\rm  if \ } y\in\Z^d, \ r<0, \quad \na G_2(y,r;\phi(\cdot,\cdot)) \ = \ 0  \ {\rm  if \ } y\in\Z^d, \ r>0.
\end{multline}
We similarly have that for $k>2$
\be \label{DF5}
\na G_k(y,r;\phi(\cdot,\cdot)) \ = \ e^{\eta r}h_k(y,r;\phi(\cdot,\cdot)) +PT_\eta[\mathbf{B}(\cdot,\cdot;\phi(\cdot,\cdot))\na G_{k-1}(\cdot,\cdot;\phi(\cdot,\cdot))] \ ,
\ee
where the function $\mathbf{B}(\cdot,\cdot;\phi(\cdot,\cdot))$ is as in (\ref{BV5}) and 
\begin{multline} \label{DG5}
h_k(y,r;\phi(\cdot,\cdot)) \ = \ \La \na\na^*G_\La(y,-r)P[D\tilde{\mathbf{b}}(\phi(0,0))\pa_\xi F_{k-1}(\phi(\cdot,\cdot))] \ , \\
{\rm  if \ } y\in\Z^d, \ r<0, \quad  h_k(y,r;\phi(\cdot,\cdot)) \ = \ 0  \ {\rm  if \ } y\in\Z^d, \ r>0.
\end{multline}
Defining the function $A_k:\Z^d\times\R\times\Om\ra\C^d$ by $A_k(y,r;\phi(\cdot,\cdot))=e^{iy\cdot\xi}\na G_k(y,r;\phi(\cdot,\cdot)) $,  we see from (\ref{DF5}) that for $k>2$ the function  $A_k(\cdot,\cdot;\phi(\cdot,\cdot))$ satisfies the equation
\be \label{DH5}
A_k(\cdot,\cdot;\phi(\cdot,\cdot)) \ = \  D_k(\cdot,\cdot;\phi(\cdot,\cdot))+PT_{\xi,\eta}[\mathbf{B}(\cdot,\cdot;\phi(\cdot,\cdot)) A_{k-1}(\cdot,\cdot;\phi(\cdot,\cdot))] \ ,
\ee
where $D_k(y,r;\phi(\cdot,\cdot))=e^{iy\cdot\xi+\eta r}h_k(y,r;\phi(\cdot,\cdot))$  and from (\ref{BS5}) it follows that the operator $T_{\xi,\eta}$ is given by (\ref{DI5}).  

Just as in Proposition 6.3 we see that if $\|\cdot\|_q$ denotes the $q$ norm (\ref{BZ5}) then for $\xi\in\R^d$ and  $1<q\le 2$
\be \label{DJ5}
\|A_2\|_q \ \le \  C_q\La_1|v|/\La^{1/q} \ , \quad \|D_k\|_q \ \le \  C_q(1-\la/\La)^{k-2}\La_1|v|/\La^{1/q} \  \ {\rm for \ } k>2,
\ee
where the constant $C_q$ depends only on $d,q$ and diverges as $q\ra 1$.  It follows then from (\ref{DH5}), (\ref{DJ5}) that for $\xi\in\R^d$ one has the inequality $\|A_k\|_2\le Ck(1-\la/\La)^{k-2}\La_1|v|/\La^{1/2} $ for some constant $C$ depending only on $d$. We can extend this inequality by using Corollary 6.1. Thus for $(\xi,\eta)$ satisfying the conditions of Lemma 5.2 for sufficiently small constant $C_1$ depending only on $d$, there exists $q_0(\La/\la)<2$ such that for some constant $C$ depending only on $d,\La/\la$  one has the inequality
 \be \label{DK5}
 \|A_k\|_q \ \le \  Ck(1-\la/\La)^{k-2}\left(1+C_2 |\Im\xi|^2/[\Re\eta/\La]\right)^{k-2}\La_1|v|/\La^{1/q} \  \ {\rm for \ } k\ge 2,
\ee
provided $q_0(\La/\la)\le q\le 2$.  We can bound now the RHS of (\ref{DC5})  in terms of  $\|g\|_p$ with $p=2q/(3q-2)$ by using (\ref{DK5}) and Young's inequality. If we combine this with the  inequality (\ref{DB5}) then we conclude from (\ref{CR5})  that
\begin{multline} \label{DL5}
\|a_k(g,\xi,\eta)v\| \ \le  \frac{C\La_1}{\La m^{3-2/p}} \|g(\cdot,\cdot)\|_p \  (1-\la/\La)^{k-1}|v| \\
+\frac{Ck\La_1}{ m^{2}\La^{3/2-1/p}} \|g(\cdot,\cdot)\|_p \  (1-\la/\La)^{k-2}\left(1+C_2 |\Im\xi|^2/[\Re\eta/\La]\right)^{k-2}|v|  \ .
\end{multline}
Evidently the inequality (\ref{DL5}) implies that (\ref{CD5}) holds provided $(\xi,\eta)$ satisfy the conditions of Lemma 5.2 for sufficiently small $C_1$ depending only on $d,\La/\la$.  Restricting $\xi$ to be in $\R^n$  then  (\ref{I6}) follows, and hence we have proven that   Hypothesis 3.2 holds in the massive field theory case. 

To complete the proof of Hypothesis 4.2 we first observe that the above argument immediately applies to the situation where the functions  $g_2,..,g_k$ are delta functions $g_j(x,t)=\del(x-x_j,t-t_j), \ j=2,..,k$.  The inequality (\ref{AJ*3}) then follows for general $g_2,..,g_k\in L^1(\Z^d\times\R,\C^d\otimes\C^d)$  from the triangle inequality.
\end{proof}

 \thanks{ {\bf Acknowledgement:} The authors would like to thank Tom Spencer for helpful conversations.

\end{document}